\numberwithin{equation}{section}
\newtheorem{theorem}{Theorem}[section]
\newtheorem{proposition}{Proposition}[section]
\newtheorem{lemma}{Lemma}[section]
\newtheorem{definition}{Definition}[section]
\journal{Nonlinearity}
\begin{document}

\begin{frontmatter}

\title{Multifractal analysis of the maximal product of consecutive partial quotients in continued fractions}
\author[label1]{Kunkun Song}
\ead{songkunkun@hunnu.edu.cn}
\author[label2]{Dingding Yu\corref{cor1}}
\ead{Yudding\_sgr@whu.edu.cn}
\author[label2]{Yueli Yu}
\ead{yuyueli@whu.edu.cn}

\address[label1]{Key Laboratory of Computing and Stochastic Mathematics (Ministry of Education),
School of Mathematics and Statistics,
Hunan Normal University, Changsha, 410081, China}
\address[label2]{School of Mathematics and Statistics, Wuhan University, Wuhan 430072, China}

\cortext[cor1]{corresponding author}

\begin{abstract}
Let $[a_1(x), a_2(x), \ldots, a_n(x), \ldots]$ be the continued fraction expansion of an irrational number $x\in (0,1)$. We study the growth rate of the maximal product of consecutive partial quotients among the first $n$ terms, defined by $L_n(x)=\max_{1\leq i\leq n}\{a_i(x)a_{i+1}(x)\}$, from the viewpoint of multifractal analysis. More precisely, we determine
the Hausdorff dimension of the level set 
\[L(\varphi):=\left\{x\in (0,1):\lim_{n\to \infty}\frac{L_n(x)}{\varphi(n)}=1\right\},\]
where $\varphi:\mathbb{R^+}\to\mathbb{R^+}$ is an increasing function such that $\log \varphi$ is a regularly increasing function with index $\rho$. We show that there exists a jump of the Hausdorff dimension of $L(\varphi)$ when $\rho=1/2$. We also construct uncountably many discontinuous functions $\psi$ that cause the Hausdorff dimension of $L(\psi)$  to transition continuously from 1 to 1/2, filling the gap when $\rho=1/2$.
\end{abstract}
\begin{keyword}
continued fractions\sep products of consecutive partial quotients\sep Hausdorff dimension
\MSC[2010] 11K50\sep 28A80
\end{keyword}
\end{frontmatter}

\section{Introduction}
Each irrational number $x\in(0,1)$ admits a unique infinite continued fraction expansion given by
\begin{equation*}
  x=\cfrac{1}{a_1(x)+\cfrac{1}{a_2(x)+\ddots+\cfrac{1}{a_n(x)+\ddots}}} :=[a_1(x),a_2(x),\ldots,a_n(x),\ldots],
\end{equation*}
where $a_1(x), \ldots, a_n(x),\ldots$ are positive integers, called the partial quotients of $x$. For any $n\geq1$, let 
\begin{equation}\label{jd}
\frac{p_n(x)}{p_n(x)}:=\left[a_1(x),\ldots , a_n(x)\right]
\end{equation}
be the $n$-th convergent of $x$. For basic properties of continued fractions, we refer to \cite{iosifescu2002metrical, Khi64} and the references therein.

The theory of continued fractions is closely related to the theory of Diophantine approximation, which studies how well a real number can be approximated by rational numbers. The approximation rate of the sequence of convergents is described by
\[\frac{1}{3a_{n+1}(x)q_n^2(x)}
\leq \left|x-\frac{p_n(x)}{q_n(x)}\right|
\leq \frac{1}{a_{n+1}(x)q_n^2(x)}.\]
This indicates that
the asymptotic Diophantine properties of $x\in(0,1)$ are reflected in the growth rate of its partial quotients. Regarding the uniform Diophantine properties, the first result is Dirichlet's theorem.

\begin{theorem}(Dirichlet, 1842)
  For any $x\in(0,1)$ and $t>1$, there exists 
  $(p,q)\in\mathbb{N}^2$ such that
  $$|qx-p|\le 1/t~~\text{and}~~1\le q<t.$$
\end{theorem}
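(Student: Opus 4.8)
The plan is to prove this via the Dirichlet box (pigeonhole) principle, which is the argument the theorem is named for. Fix $x\in(0,1)$ and $t>1$, and set $N:=\lceil t\rceil$, so that $N\ge 2$ and $1/N\le 1/t$. I would look at the $N+1$ real numbers
\[
0,\ \{x\},\ \{2x\},\ \ldots,\ \{(N-1)x\},\ 1,
\]
all lying in $[0,1]$, where $\{y\}$ denotes the fractional part of $y$. Listed in increasing order these numbers split $[0,1]$ into $N$ consecutive gaps whose lengths are nonnegative and sum to $1$; hence some gap has length at most $1/N$, that is, two of the $N+1$ numbers, say $u<v$, satisfy $0<v-u\le 1/N\le 1/t$.

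Next I would dispose of the degenerate cases. Since $x$ is irrational, each $\{jx\}$ lies strictly inside $(0,1)$, so the pair $\{u,v\}$ cannot be $\{0,1\}$ (these two are never adjacent in the ordering once $N\ge 2$); hence at least one of $u,v$ has the form $\{jx\}$ with $1\le j\le N-1$. If one of $u,v$ is an integer endpoint $0$ or $1$ and the other is $\{jx\}$, then $jx$ lies within $1/t$ of an integer; if instead $u=\{ix\}$ and $v=\{jx\}$ with $0\le i<j\le N-1$, then $(j-i)x$ lies within $1/t$ of an integer. In both cases one extracts integers $q$ and $p$ with $1\le q\le N-1$ and $|qx-p|\le 1/t$. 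Since $q\le N-1=\lceil t\rceil-1<t$, this is exactly the assertion of the theorem.

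There is no genuine difficulty here; the one thing to get right is the bookkeeping of ceilings. The usual phrasing of the pigeonhole bound is ``$|qx-p|<1/N$ with $1\le q\le N$'', and to land precisely on the stated form ``$|qx-p|\le 1/t$ and $1\le q<t$'' one has to tune the count — this is the role of adjoining the endpoint $1$, which is what shrinks the available range of $q$ from $N$ down to $N-1$. A standard harmless caveat, depending on whether $\mathbb{N}$ is taken to include $0$, is that the resulting $p$ may equal $0$: for instance when $x<1/t$ already, one simply takes $q=1$ and $p=0$.

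It is worth noting that, in the setting of this paper, the theorem can alternatively be read straight off the convergent estimate recorded above. From $q_{n+1}=a_{n+1}q_n+q_{n-1}\ge a_{n+1}q_n$ and the right-hand inequality there, $|q_nx-p_n|\le 1/(a_{n+1}q_n)\le 1/q_{n+1}$; since $q_0=1<t$ and $q_n\to\infty$, choosing the largest $n$ with $q_n<t$ forces $q_{n+1}\ge t$, and then $(p,q)=(p_n,q_n)$ satisfies $1\le q<t$ and $|qx-p|\le 1/q_{n+1}\le 1/t$.
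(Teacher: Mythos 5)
Your pigeonhole argument is correct, and it is the standard proof. Note that the paper states this result without proof (it is cited simply as Dirichlet's 1842 theorem), so there is nothing to compare it against; your bookkeeping — adjoining both endpoints $0$ and $1$ to the list $\{jx\}$, $1\le j\le N-1$, so that the resulting $q$ satisfies $q\le N-1=\lceil t\rceil-1<t$ — does land exactly on the stated form $1\le q<t$. The restriction to irrational $x$ (needed so the $N+1$ points are distinct and $0,1$ are non-adjacent) is harmless: this is the only case the paper uses, and for rational $x$ a coincidence $\{ix\}=\{jx\}$ immediately gives $|(j-i)x-p|=0$.

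One caveat about the alternative you sketch at the end: the chain $|q_nx-p_n|\le 1/(a_{n+1}q_n)\le 1/q_{n+1}$ is reversed. Since $q_{n+1}=a_{n+1}q_n+q_{n-1}\ge a_{n+1}q_n$, one has $1/(a_{n+1}q_n)\ge 1/q_{n+1}$, so the displayed inequality of the paper does not by itself yield $|q_nx-p_n|\le 1/q_{n+1}$. The conclusion is still true, but it requires the sharper standard estimate $|q_nx-p_n|=1/(\alpha_{n+1}q_n+q_{n-1})<1/q_{n+1}$ (with $\alpha_{n+1}>a_{n+1}$ the full continued-fraction tail), not the looser bound $1/(a_{n+1}q_n^2)$ quoted in the introduction. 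Since this is only offered as an aside and your pigeonhole proof stands on its own, this does not affect the result.
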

 It follows that for any $x\in(0,1)$, there exist infinitely many solutions $(p,q)\in\mathbb{N}^2$ such that $|qx-p|< 1/q$. Continued fractions provide a straightforward method for finding these ``good" rational approximations $p/q$.
In other words, we have
\[
\min\limits_{p\in\mathbb{N}, 1\leq q\leq q_{n}(x)}\left|x-\frac{p}{q}\right|=\left|x-\frac{p_n(x)}{q_n(x)}\right|.
\]

Given $t_0\geq1$, let $\Psi:[t_0,\infty)\to\mathbb{R^+}$ be a non-increasing function. Let $\mathcal{D}(\Psi)$ denote the set of $\Psi$-Dirichlet improvable numbers, that is, the set of all $x\in(0,1)$ for which there exists $T>t_0$ such that for every $t>T$, the inequalities
\[|qx-p|<\Psi(t)~~ \text{and}~~ 1\leq q<t,\]
have non-trivial solutions $(p,q)\in\mathbb{N}^2$. Elements of the complementary set, denoted by $\mathcal{D}^{c}(\Psi)$, are called $\Psi$-Dirichlet non-improvable numbers. The study of the metrical properties of $\mathcal{D}(\Psi)$ goes back to the work of Davenport and Schmidt \cite[Theorem 1]{DS69}, who proved that, for any $0<c<1$, the set $\mathcal{D}(c/t)$ is contained in a union of the set of rational numbers and the set of irrational numbers with uniformly bounded partial quotients. 
 In 2008, Kleinbock and Wadleigh \cite[Theorem 1.8]{kleinbock2018zero} established a zero-one law for the Lebesgue measure of $\mathcal{D}^{c}(\Psi)$.  Subsequently, Hussain, Kleinbock, Wadleigh, and Wang \cite{hussain2018hausdorff} established a zero-infinity law for the set $\mathcal{D}^{c}(\Psi)$ in the sense of $g$-dimensional Hausdorff measure, where $g$ is an essentially sub-linear dimension function. Bos, Hussain and Simmons \cite{bos2023generalised} recently generalized the Hausdorff measure of $\mathcal{D}^{c}(\Psi)$ to all dimension functions under natural, non-restrictive conditions. It is worth pointing out that Kleinbock and Wadleigh \cite[Lemma 2.2]{kleinbock2018zero} provided a useful criterion based on continued fractions to determine whether a real number belongs to $\mathcal{D}(\Psi)$ under the condition that $t\Psi(t) < 1$ for all $t \geq t_0$. More precisely, they showed that 
\begin{align*}
  \big\{x\in(0,1)\colon &a_n(x)a_{n+1}(x)\geq \frac{q_n(x)\Psi(q_n(x))}{\big(1-q_n(x)\Psi(q_n(x))\big)}\,\    \text{for infinitely many} \  n\in \mathbb{N}\big\}\subseteq \mathcal{D}^c(\Psi)\\
 & \subseteq\big\{x\in(0,1)\colon a_n(x)a_{n+1}(x)\geq \frac{q_n(x)\Psi(q_n(x))}{4\big(1-q_n(x)\Psi(q_n(x))\big)}\,  \ \text{for infinitely many}\  n\in \mathbb{N}\big\}.
\end{align*}

This demonstrates that the behavior of the product of consecutive partial quotients is crucial in studying the set of Dirichlet non-improvable numbers. Later on, many interests have been drawn to the growth rate of the product of consecutive partial quotients from various perspectives. See Bakhrawar, Hussain, Kleinbock and Wang \cite{BHKW2022}, Fang, Ma, Song and Yang \cite{FMSY24}, Huang, Wu and Xu \cite{huang2020metric}, Bakhrawar and Feng \cite{BF2022},   Hussain and Shulga \cite{hussain2024} for example.

In another direction, inspired by the works of Khinchin \cite{K35} and Diamond and Vaaler \cite{DV86},  Hu, Hussain, and Yu \cite{hu2021limit} investigated metrical properties related to the sum and the maximum of the product of consecutive partial quotients, defined by
$$S_n(x)=\sum_{i=1}^{n}a_i(x)a_{i+1}(x)\ \ \text{and}\ \ L_n(x)=\max_{1\le i\le n}\big\{a_i(x)a_{i+1}(x)\big\}.$$
In particular, they proved that $S_n(x)/(n\log^2n)$ converges to $1/(2\log 2)$ in Lebesgue measure.
  For the strong law of large numbers, a similar approach to that used by Philipp \cite{philipp1988limit} can show that there is no reasonably regular function such that the ratio of the sum $S_n(x)$ to the function converges to a finite non-zero constant for Lebesgue almost all $x\in(0,1)$. However, a result of Hu et al. \cite[Theorem 1.5]{hu2021limit} shows that the maximum $L_n(x)$ is responsible for the failure of the strong law of large numbers. Specifically, for Lebesgue almost all $x \in (0,1)$, 
\[
\lim_{n \to \infty} \frac{S_n(x) - L_n(x)}{n \log^2 n} = \frac{1}{2 \log 2}.
\]

 Hu et al. \cite{hu2021limit} also showed that for Lebesgue almost all $x\in(0,1)$, 
  $$\liminf_{n\to\infty}\frac{L_n(x)\log\log n}{n\log n}=\frac{1}{2\log 2}.$$
Then, it is natural to study the points for which  $L_n(x)$ grows at different rates. 
More precisely,  we are interested in the Hausdorff dimension of the level set 
\begin{equation*}
  L(\varphi):=\left\{x\in (0,1):\lim_{n\to \infty}\frac{L_n(x)}{\varphi(n)}=1\right\},
\end{equation*}
where $\varphi:\mathbb{R^+}\to\mathbb{R}^{+}$ is an increasing function such that $\log \varphi$ is a regularly increasing function with index $\rho$.
Before stating our main results, we shall introduce some classes of functions with different growth rates, representing typical cases of regularly varying functions as described in \cite{N. Bingham1989}.
\begin{definition}
  Let $c>0$ be a constant. A function $f\in C^1\left([c,\infty)\right)$ is said to be a regularly increasing function with index $\rho$ if $f(x)>0$, $\lim_{x\to\infty}f(x)=\infty$, $f'(x)>0$, and
  \begin{equation}\label{def1}
    \lim_{x\to\infty}\frac{xf'(x)}{f(x)}=\rho\in [0,\infty].
  \end{equation}
\end{definition}
The definition and principal properties of regularly increasing functions are due to Karamata \cite{Kara30} in the case of continuous functions, and to Korevaar, van Aardenne-Ehrenfest and de Bruijn \cite{kore49} in the case of measurable functions. Regularly increasing functions frequently arise in number theory and probability theory. Jakimczuk \cite{J2011} employed regularly increasing functions with index 
0 to study the asymptotic behavior of Bell numbers. Chang and Chen \cite[Theorem 1.2]{CC18} determined the Hausdorff dimension of level sets associated with the growth rate of the maximum of partial quotients among the first 
$n$ terms for regularly increasing functions with index 
0. This result was recently extended by Fang and Liu \cite[Theorem 1.8]{fang2021largest}, the authors show that if $\log \varphi$ is a regularly increasing function with index $\rho$, then
\[
\dim_{\mathrm{H}} \left\{x \in (0,1) : \lim_{n \to \infty} \frac{\max_{1 \le i \le n} a_i(x)}{\varphi(n)} = 1 \right\}=
\begin{cases}
1, & \text{if } 0<\rho<1/2; \\
1/2, & \text{if}~1/2<\rho<\infty;\\
\frac{1}{2 + \limsup_{n \to \infty} \frac{\log \varphi(n+1)}{\sum_{k=1}^n \log \varphi(k)}}, & \text{if }\rho=\infty.
\end{cases}
\]

Our results reveal that the Hausdorff dimension of $L(\varphi)$ decreases continuously from $1$ to $0$ in a certain sense, depending on the index $\rho$ of the regularly increasing function $\log\varphi$. In what follows, we use the notation $\dim_{\rm H}$ to denote the Hausdorff dimension. Now, we are in a position to state the main results. 
\begin{theorem}\label{main thm1}
  Let $\varphi:\mathbb{R^+}\to\mathbb{R}^+$ be an increasing function such that $\log \varphi$ is a regularly increasing function with index $\rho$. Then we have 
  \begin{itemize}
    \item[(1)] $\dim_{\mathrm{H}}L(\varphi)=1$, if $0\le \rho<1/2$, 
    \item[(2)] $\dim_{\mathrm{H}}L(\varphi)=1/2$, if $1/2< \rho\le 1$,
    \item[(3)] $\dim_{\mathrm{H}}L(\varphi)=\frac{1}{1+\beta}$, if $1<\rho\le \infty$,
  where $\beta$ is given by
  \begin{equation}\label{beta}
    \beta:=\limsup_{n\to\infty}\frac{\log\varphi(n+1)+\log\varphi(n-1)+\cdots+\log\varphi(n+1-2\lfloor n/2\rfloor)}{\log\varphi(n)+\log\varphi(n-2)+\cdots+\log\varphi(n-2\lfloor(n-1)/2\rfloor)}.  
  \end{equation}
  \end{itemize}
\end{theorem} 
 
Before proceeding, we give some remarks.
\begin{itemize}
\item Using the same method as Fang and Liu \cite[Lemma 3.1]{fang2021largest}, we can establish that $L(\varphi)$ is non-empty if and only if $\varphi$ is equivalent to an increasing function. Thus, we always assume that $\varphi$ is increasing when analyzing $L(\varphi)$.
\item In \cite{J2011}, the function $f$ is referred to as being of slow increase when $\rho=0$. Functions such as $\log x$, $\log\log x$, $(\log x)^a$ with $a\in \mathbb{R}$ and $e^{(\log x)^b}$ with $0<b<1$ regularly increase with the index $\rho =0$; functions such as $e^x$, $xe^x$ and $e^x/x^2$ are regularly increasing with index $\rho=\infty$. 
\item Let $1<b<\infty$ and \(\varphi(x) = e^{x^{b} \log x}\). Then \(\log \varphi\) is regularly increasing with index \(1<\rho<\infty\) and 
\[\dim_{\mathrm{H}}L(\varphi)=\frac{1}{2}.\]
\item Let $1<b<\infty$ and $\log\varphi(x)=b^{x^r}$ with $r>0$. Then $\log\varphi$ is a regularly increasing function with index $\rho=\infty$ and  
\[
\dim_{\mathrm{H}}L(\varphi)=
\begin{cases}
\frac{1}{2}, & \text{if}~ 0<r<1;\\
     \frac{1}{1+b}, & \text{if}~ r=1;\\
    0, & \text{if}~ r>1. \end{cases}\]
  \item In Theorem~\ref{main thm1}~(3), if we choose $\varphi(n) = e^{b^n}$ with $b > 1$, then $\log \varphi$ is regularly increasing with index $\rho = \infty$, and we obtain
\[
\dim_{\mathrm{H}} L(\varphi) = \frac{1}{1 + b}.
\]
In this case, the parameter $\beta$ defined in \eqref{beta} simplifies to the form
\begin{equation}\label{beta2}
    1 + \limsup_{n \to \infty} \frac{\log \varphi(n+1)}{\sum_{k=1}^n \log \varphi(k)},
\end{equation}
which coincides with the result in \cite[Theorem 1.8]{fang2021largest}.
However, if we consider the function
\[
\varphi(x) = \exp\left(x^a \cdot \left(\log 2 + \frac{\log 3 - \log 2}{1 + x^b} \cdot \cos(\pi x)\right)\right)
\]
with $1 < a \le \infty$ and $b > 0$, then $\log \varphi$ is still regularly increasing with index $1 < \rho = a \le \infty$. For sufficiently large positive integers $x$, this function behaves asymptotically as
\[
\varphi(n) = 
\begin{cases}
2^{n^a}, & \text{if } n \text{ is odd}; \\
3^{n^a}, & \text{if } n \text{ is even}.
\end{cases}
\]
As a consequence, the value of the parameter $\beta$ in \eqref{beta} becomes $\log 3 / \log 2$, whereas the value of the simplified form in \eqref{beta2} is $1$. This case shows that the expression for $\beta$ cannot be reduced to the form given in \eqref{beta2}, and the general formulation is therefore necessary.
\end{itemize}

For the critical case $\rho=1/2$, we construct two regularly increasing functions to show that there exists a jump of the Hausdorff dimension of $L(\varphi)$.
\begin{theorem}\label{mainthm3}
  Let  $R:\mathbb{R}^+\to\mathbb{R}^+$ be a regularly increasing function with index $0$. Then we have
    \begin{align*}
    \dim_{\mathrm{H}}L(\varphi)=\begin{cases}
        1,&\text{if}~~\log\varphi(n)=\sqrt{n}/R(n);\\
        1/2,& \text{if}~~\log\varphi(n)=\sqrt{n}R(n),
    \end{cases}
\end{align*}
\end{theorem}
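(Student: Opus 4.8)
The plan is to exploit the fact that in both cases $\log\varphi$ is a regularly increasing function of index exactly $1/2$ --- the borderline left open by Theorem~\ref{main thm1} --- while the value of $\dim_{\mathrm H}L(\varphi)$ is governed by the size, relative to $n$, of the quantity
\[\Sigma(n):=\sum_{k:\,n_k\le n}\log\varphi(n_k),\]
where $(n_k)_{k\ge1}$ is a sequence of record times chosen so that $\log\varphi(n_{k+1})-\log\varphi(n_k)=\delta_k$ for a sequence $\delta_k\downarrow0$ still to be calibrated. Since $R$ has index $0$, we have $R(n)\to\infty$, whence $(\log\varphi(n))^2=n/R(n)^2=o(n)$ in the first case and $(\log\varphi(n))^2=nR(n)^2$ is of larger order than $n$ in the second; these will translate into $\Sigma(n)=o(n)$ and $\Sigma(n)/n\to\infty$, respectively.

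For the lower bounds I would build, for each fixed $M\ge2$, a Cantor subset of $L(\varphi)$. Calibrate the $\delta_k\downarrow0$ so slowly --- using only that $R(n)\to\infty$, for instance $\delta_k\asymp1/R(k^2)$ --- that in the first case $K(n):=\#\{k:n_k\le n\}$ and $\Sigma(n)$ are both $o(n)$; one checks that $\delta_k$ still dominates the increments of $\log\varphi$, so the $n_k$ exist, and that $\delta_k\to0$ forces $\varphi(n)/\varphi(n_k)\to1$ on each stretch $n_k\le n<n_{k+1}$. Put $a_{n_k-1}=a_{n_k+1}=1$, let $a_{n_k}$ run over the integers in $[\varphi(n_k),(1+\eta_k)\varphi(n_k)]$ for a suitable $\eta_k\downarrow0$, and let every other partial quotient run freely over $\{1,\dots,M\}$; then $a_i(x)a_{i+1}(x)\le M^2$ whenever $i\ne n_k$, so $L_n(x)/\varphi(n)\to1$ on this set and it lies in $L(\varphi)$. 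Equip it with the product of the measure of maximal dimension of the subsystem $\{x:a_i(x)\le M\ \forall i\}$ on the free coordinates and the uniform measure on each record block. A mass distribution estimate --- the one delicate point being the passage from cylinders to arbitrary balls at the depths $n_k$, where the narrowness of the interval $[\varphi(n_k),(1+\eta_k)\varphi(n_k)]$ keeps the competing cylinders comparable, as in \cite{CC18,fang2021largest} --- then yields a lower bound for the Hausdorff dimension of the form
\[\liminf_{n\to\infty}\frac{n\,h_M+\Sigma(n)}{n\,\Lambda_M+2\,\Sigma(n)},\]
where $h_M$ and $\Lambda_M$ are the entropy and Lyapunov exponent of the chosen measure, so that $h_M/\Lambda_M=\dim_{\mathrm H}\{x:a_i(x)\le M\ \forall i\}$ and $\sup_{M}h_M/\Lambda_M=1$. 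In the first case $\Sigma(n)=o(n)$, so this equals $h_M/\Lambda_M$, and letting $M\to\infty$ gives $\dim_{\mathrm H}L(\varphi)\ge1$; in the second case $\Sigma(n)/n\to\infty$ (automatic from $(\log\varphi(n))^2/n\to\infty$), so the $\Sigma(n)$ terms dominate, the expression equals $1/2$ even for $M=2$, and $\dim_{\mathrm H}L(\varphi)\ge1/2$.

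For the upper bounds, the first case is trivial. In the second I would rerun (with only cosmetic changes) the covering argument behind Theorem~\ref{main thm1}(2), whose only input is $\log\varphi(n)/\sqrt n\to\infty$: for $x\in L(\varphi)$ and $\epsilon>0$ there is $N$ with $(1-\epsilon)\varphi(n)\le L_n(x)\le(1+\epsilon)\varphi(n)$ for all $n\ge N$, and tracking the record times $m_1<m_2<\cdots$ of $x$ one gets that $\log\varphi(m_{j+1})-\log\varphi(m_j)$ is bounded by some $\epsilon'=\epsilon'(\epsilon)$ tending to $0$ with $\epsilon$, and that the last record $m_t\le n$ satisfies $\log\varphi(m_t)\ge\log\varphi(n)-\epsilon'$; hence there are at least $\sim\log\varphi(n)/\epsilon'$ records below $n$ and $\sum_j\log\varphi(m_j)\gtrsim(\log\varphi(n))^2/(2\epsilon')=nR(n)^2/(2\epsilon')$. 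Each record $m_j$ forces a pair with $a_{m_j}(x)a_{m_j+1}(x)\ge(1-\epsilon)\varphi(m_j)$, contributing a factor $\asymp_s\varphi(m_j)^{1-2s}\log\varphi(m_j)$ to the sum $\sum_C|C|^s$ over the depth-$n$ cylinders $C$ meeting $L(\varphi)$, for $s>1/2$; summing over the admissible record sets and using $\sum_j\log\varphi(m_j)/n\to\infty$, this sum tends to $0$ for every $s>1/2$, so $\dim_{\mathrm H}L(\varphi)\le1/2$. This last estimate is exactly the one that breaks down when $(\log\varphi(n))^2=o(n)$, i.e.\ in the first case, which accounts for the jump.

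The hard part will be the simultaneous calibration of $\delta_k$ (and, secondarily, $\eta_k$) in the first-case construction: $\delta_k$ has to tend to $0$ for the genuine limit $L_n/\varphi(n)\to1$, has to stay above the increments of $\log\varphi$ so that the record times can be placed, and has to tend to $0$ slowly enough --- using nothing about $R$ beyond $R(n)\to\infty$, since $R$ may grow arbitrarily slowly --- that $K(n)$ and $\Sigma(n)$ remain $o(n)$; this is what makes the borderline index $1/2$ behave like an index below $1/2$. Once this is arranged, the mass distribution estimate and the covering estimate are both refinements of arguments already carried out for Theorem~\ref{main thm1}.
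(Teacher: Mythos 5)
Your overall strategy is in the right spirit — reuse the machinery of Theorem~\ref{main thm1}, with the size of $\Sigma(n)$ relative to $n$ deciding which side of the dichotomy one lands on — but the proposal both over-engineers Case~1 and takes a genuinely different (and less clean) route in Case~2's lower bound.

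For Case~1, the calibration problem you flag as ``the hard part'' does not exist in the paper's proof. Rather than prescribing $\delta_k$ and then solving for record times, the paper simply fixes $m_k=k^2+1$ and invokes Lemma~\ref{i2}(3): since $\log\varphi(n)=\sqrt n/R(n)$ means $\log\varphi(x^2)=x/R(x^2)$, part (3) of that lemma gives directly $\log\varphi(m_{k+1})-\log\varphi(m_k)\to0$ and hence $\varphi(m_{k+1})/\varphi(m_k)\to1$. Once $m_k=k^2+1$ is fixed, the entire lower-bound construction, the bounded-distortion estimate $\Sigma(n)=o(n)$, and the $(1+\varepsilon)$-H\"older map onto $E_M$ are verbatim the same as in the $0\le\rho<1/2$ proof of Theorem~\ref{main thm1}. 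Nothing needs tuning. Your instinct to introduce $\delta_k\asymp1/R(k^2)$ is in fact just the increments of the fixed sequence $m_k\sim k^2$ in disguise; phrasing the construction around $\delta_k$ instead of $m_k$ creates the illusion of a delicate free parameter.

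For Case~2's lower bound your sparse-records construction with a mass-distribution estimate is a genuine departure from the paper, which simply invokes Lemma~\ref{LemmaLR} with $s_n=\sqrt{\varphi(n)}$, $t_n=\sqrt{\varphi(n)/n}$ (constraining \emph{all} partial quotients, not just a sparse subsequence) exactly as in the $1/2<\rho<1$ case; one checks that $\varphi(n+1)/\varphi(n)\to1$ (since $\sqrt{n+1}R(n+1)-\sqrt nR(n)\sim R(n)/(2\sqrt n)\to0$ for $R$ slowly varying) and $\varphi(n)/n\to\infty$, and the dimension formula gives $1/2$ with no further fuss. Your route could probably be made to work, but the careful calibration (choosing $\delta_k\to0$ slowly enough that $K(n)=o(n)$ while $\Sigma(n)/n\to\infty$ and simultaneously $L_n/\varphi(n)\to1$) is precisely where the real work would be in your framework — ironically, this is the case you claim is automatic, while the case you flag as hard is the one that isn't. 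Your Case~2 upper bound sketch does match the paper: set $n_k=k^2$, note $\varphi(n_{k+1})/\varphi(n_k)\to\infty$ by Lemma~\ref{i2}(3), and rerun the covering argument from Theorem~\ref{main thm1}(2); the convergence of the product hinges on $\ell R(\ell^2)\to\infty$, which is exactly your ``$(\log\varphi(n))^2/n\to\infty$'' condition.
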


 Thus, we are committed to constructing a discontinuous function $\psi$ such that the Hausdorff dimension of $L(\psi)$ decreases continuously from 1 to 1/2. The following result provides an answer. To explain this, we need to introduce the pressure function $P(\theta)$, defined by
\begin{equation}\label{pressure function}
  P(\theta):=\lim_{n\to\infty}\frac{1}{n}\log \sum_{a_1,\ldots,a_n\in \mathbb{N}}q_n^{-2\theta}(a_1, \ldots, a_n),~\forall\ \theta>\frac{1}{2}.
\end{equation}
The pressure function $\theta\mapsto P(\theta)$ was shown to be strictly decreasing, convex and real analytical in $(1/2,\infty)$, and admits a singularity in $1/2$ (see \cite{KS07}). Our conclusion is
stated in the following theorem.
\begin{theorem}\label{mainthm2}
  Let $c\in (0,\infty)$ and $\log\psi(n)=c\lfloor\sqrt{n}\rfloor$. Then we have
  $$\dim_{\mathrm{H}}L(\psi)=\theta(c),$$
  where $\theta(c)$ is the unique real solution of the equation
  $$P(\theta)=c\left(\theta-\frac{1}{2}\right).$$
\end{theorem}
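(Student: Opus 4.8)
The plan is to prove the two inequalities $\dim_{\mathrm{H}}L(\psi)\ge\theta(c)$ and $\dim_{\mathrm{H}}L(\psi)\le\theta(c)$ separately, combining the thermodynamic formalism of the Gauss map with the block structure of $\psi$. Since $\log\psi(n)=c\lfloor\sqrt{n}\rfloor$, the function $\psi$ is constant and equal to $e^{ck}$ on each block $B_k:=[k^2,(k+1)^2)$, which has length $2k+1\asymp\sqrt{n}$. The key structural fact is that $x\in L(\psi)$ forces, for every $\varepsilon>0$ and all large $k$: (i) $a_i(x)a_{i+1}(x)\le(1+\varepsilon)e^{c\lfloor\sqrt{i}\rfloor}$ for all $i$ (from the upper bound on $L_n$); and (ii) there is an index $i_k\in[(k-1)^2-1,k^2]$ with $a_{i_k}(x)a_{i_k+1}(x)\in\big((1-\varepsilon)e^{ck},(1+\varepsilon)e^{ck}\big)$ (from $L_{k^2}(x)\ge(1-\varepsilon)e^{ck}$, which must be witnessed by some product, while (i) confines the witness). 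Consequently, at a level $N\approx K^2$ the continuant factorizes, up to a factor $e^{o(N)}$, into the product of the $K$ ``big'' contributions, each $\asymp e^{ck}$, and the continuants of the $K$ complementary ``free runs'': $q_N\asymp e^{cN/2}\prod_{k\le K}q(\mathrm{run}_k)$, with $|I_N|\asymp q_N^{-2}$ (here $I_N$ denotes the level-$N$ cylinder). Note also that $\theta(c)$ is well defined and lies in $(1/2,1)$, since $P$ is strictly decreasing on $(1/2,\infty)$ with $P(1)=0$ and a singularity at $1/2$, while $\theta\mapsto c(\theta-1/2)$ is strictly increasing.

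For the upper bound I would fix a small $\varepsilon>0$, stratify $L(\psi)$ by the index from which (i)--(ii) hold, and cover a stratum by level-$N$ cylinders over admissible words, estimating $\sum|I_N|^s$ for $s>\theta(c)$. There are at most $\prod_{k\le K}(2k+1)=e^{o(N)}$ choices of the positions $i_k$; for each ``big pair'' one has $\sum_{b_1b_2\in\mathrm{range}_k}(b_1b_2)^{-2s}\lesssim k\,e^{ck(1-2s)}$; and for each free run, $\sum_w q(w)^{-2s}\le e^{|w|(P(s)+o(1))}$ uniformly, by the definition of $P$. Multiplying these, using the factorization of the continuant, and $\sum_{k\le K}k\sim N/2$, one obtains $\sum|I_N|^s\le e^{o(N)}\,e^{N(P(s)-c(s-1/2))}$, which tends to $0$ because $P(s)-c(s-1/2)<0$ for $s>\theta(c)$. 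Hence $\mathcal{H}^s(L(\psi))=0$ for every $s>\theta(c)$, so $\dim_{\mathrm{H}}L(\psi)\le\theta(c)$.

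For the lower bound I would pass to finite alphabets. For $M\ge2$ let $P_M$ be the pressure of the continued fraction system with digits restricted to $\{1,\dots,M\}$, and let $\theta_M\in(1/2,1)$ be the unique solution of $P_M(\theta)=c(\theta-1/2)$; its existence and location follow from $P_M(1/2)>0$, $P_M(s_M)=0$ where $s_M:=\dim_{\mathrm{H}}E_M>1/2$ ($E_M$ the set with all partial quotients $\le M$), and monotonicity, while $P_M\uparrow P$ pointwise on $(1/2,\infty)$ gives $\theta_M\uparrow\theta(c)$. I would then construct a Cantor set $\mathbb{K}_M\subseteq L(\psi)$ by specifying, in each block $B_k$ with $k$ large: $a_{k^2}=a_{k^2+2}=1$; the big digit $a_{k^2+1}$ ranging over $\{b\in\mathbb{N}:(1-1/k)e^{ck}\le b\le(1+1/k)e^{ck}\}$ (note there are $\asymp e^{ck}/k$ admissible values — it is this multiplicity, not a single forced value, that produces the $-c/2$ shift in the pressure equation); and the remaining $2k-2$ free digits of $B_k$ ranging over $\{1,\dots,M\}$. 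One checks directly that every resulting point satisfies $L_n/\psi(n)\to1$. On $\mathbb{K}_M$ I would place the measure $\mu$ that is deterministic at the forced sites, uniform on the $\asymp e^{ck}/k$ big digits, and distributed on each free run according to the Gibbs measure of the $M$-truncated system for the potential $-2\theta_M\log|T'|$; then $\mu(I_N)\asymp e^{-NP_M(\theta_M)}\prod_k q(\mathrm{run}_k)^{-2\theta_M}\cdot\prod_k(k\,e^{-ck})$, which combined with $|I_N|\asymp e^{-cN}\prod_k q(\mathrm{run}_k)^{-2}$ and the equation $P_M(\theta_M)=c\theta_M-c/2$ yields $\mu(I_N)=|I_N|^{\theta_M}e^{o(N)}$. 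Since $-\log|I_N|\gtrsim cN$, the mass distribution principle gives $\dim_{\mathrm{H}}\mathbb{K}_M\ge\theta_M$, and letting $M\to\infty$ finishes the argument.

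I expect the obstacles to be technical rather than conceptual. The first is making rigorous the ``continuant factorizes up to $e^{o(N)}$'' estimate — in particular, for the upper bound, handling the case where the big pairs produced by (ii) lie near block boundaries and may abut or overlap, rather than occupying the clean, well-separated positions used in the lower-bound construction. The second is upgrading the cylinder estimate $\mu(I_N)\asymp|I_N|^{\theta_M}e^{o(N)}$ to the genuine ball estimate required by the mass distribution principle, which needs the usual control on the ratio of lengths of consecutive cylinders in the presence of the large partial quotients $a_{k^2+1}$.
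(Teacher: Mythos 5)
Your plan is correct and leads to the right answer, but both halves diverge from the paper's route in ways worth noting. For the lower bound, you reconstruct a Cantor set and a Gibbs measure for the $M$-truncated Gauss system from scratch, passing through $P_M\uparrow P$ and $\theta_M\uparrow\theta(c)$. The paper instead invokes a ready-made black-box theorem (their Lemma~5.1, Theorem~A of Fang--Moreira--Zhang) for sets of the form $E(\{n_k\},\{s_k\},\{t_k\})$ with one large digit per block: plugging in $n_k=k^2$, $s_k=e^{ck}$, $t_k=e^{ck}/k$ gives $\alpha_1=c/2$, $\alpha_2=0$, and the pressure equation $P(\theta)=c\theta-c/2$ drops out with no further analysis. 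Your self-contained route buys independence from that reference at the cost of having to rebuild the Gibbs-measure machinery and the ball-versus-cylinder upgrade you rightly flag as a technical point. For the upper bound, the difference is more conceptual. You stratify by the positions $i_k$ of the per-block witnesses and factorize the continuant along those blocks, which is precisely what creates the boundary/abutment headache you anticipate. The paper avoids that headache entirely by enlarging $L(\psi)$ to $\Gamma(c-\delta)=\bigcap_m\Gamma_m(c-\delta)$, where $\Gamma_{m,n}$ only asks that the \emph{cumulative} product of all pairs exceeding $e^m$ be $>e^{(c-\delta)n/2}$, with no positional constraint at all. The covering of $\Gamma_{m_*,n}$ is then parametrized by an unordered set of large-pair indices $(j_1,\dots,j_\ell)$ and their sizes $(\lambda_1,\dots,\lambda_\ell)$, with combinatorial counts $\#\mathcal{A}_{n,\ell},\#\mathcal{B}_{\ell,\lambda}\le e^{\varepsilon\lambda}$ via Stirling; this automatically absorbs abutting, merging, or irregularly-spaced big pairs, which is exactly the case your position-constrained factorization would have to treat by hand. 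So the paper's upper bound is the more robust of the two; your per-block version is tighter in spirit (it uses more structure of $L(\psi)$) but costs more to make watertight.
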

 
 Before proceeding, we give some remarks.
\begin{itemize}
\item Notice that the function $c\mapsto \theta(c)$ decreases from 1 to 1/2 as $c$ changes from 0 to $\infty$.
 \item  There exist uncountably many discontinuous functions for which the Hausdorff dimension of $L(\psi)$ continuously decreases from 1 to 1/2. Examples include $\psi(n)=R(n)e^{c\lfloor\sqrt{n}\rfloor}$ and $\psi(n)=e^{c\lfloor\sqrt{n}\rfloor}/R(n)$, where $R$ is increasing regularly with the index $\rho=0$.
  \item If we consider the level set of maximal multiple products of consecutive partial quotients in continued fractions, the above results remain unchanged except for Theorem \ref{main thm1} (3). It may require a different approach, and we have not yet established a proof for this case.
 \end{itemize}
\begin{figure}[h]
\centering
        \includegraphics[width=0.4\textwidth]{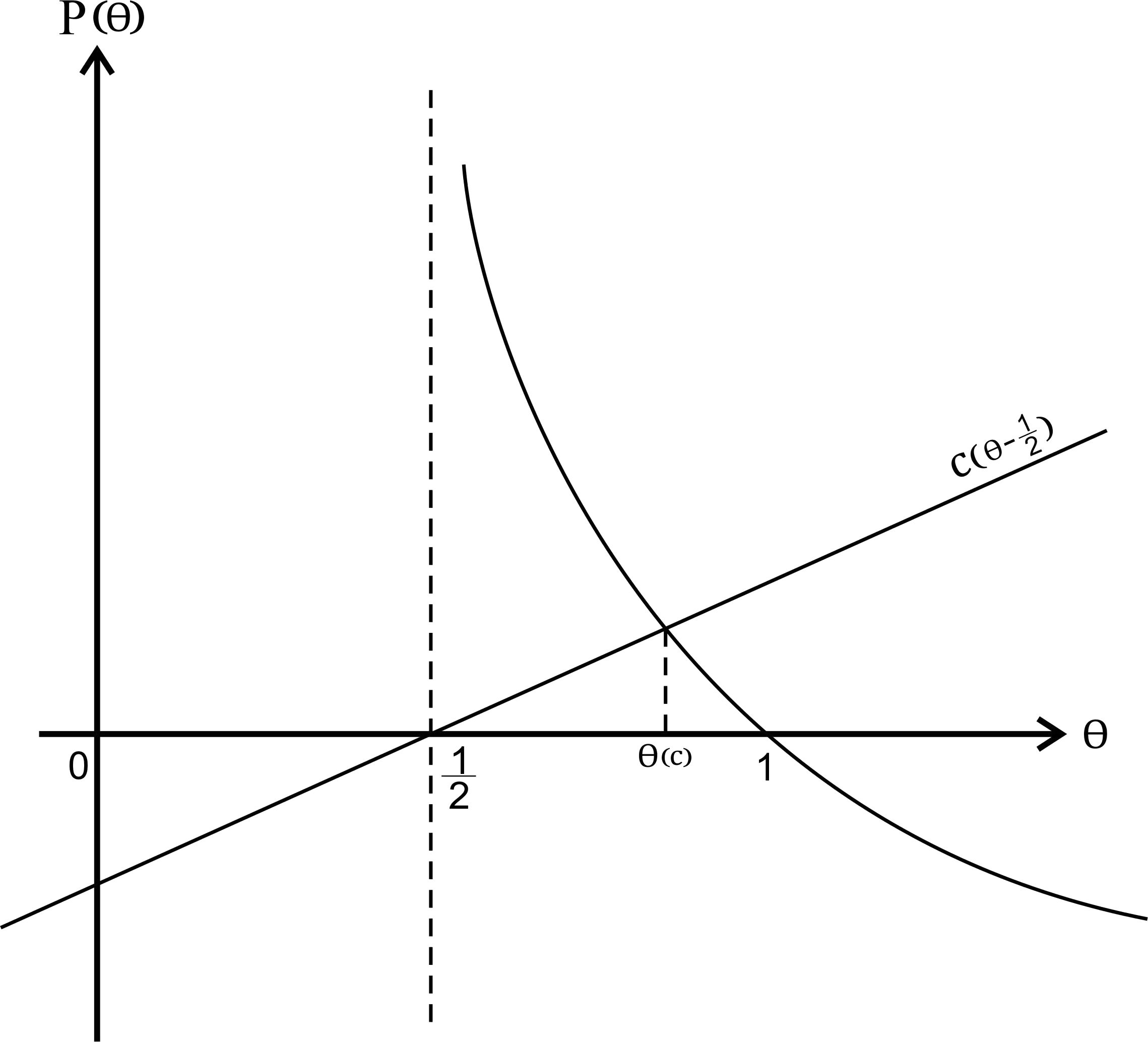}
        \caption{The illustration for the solution of the pressure equation in Theorem \ref{mainthm2}}
\end{figure}

Throughout this paper, we use $|\cdot|$ to denote the length of a subinterval of $(0,1)$, 
$\mathcal{H}^{s}$ to denote the $s$-dimensional Hausdorff measure of a set, 
$\lfloor x\rfloor$ the largest integer not exceeding $x$ and $\sharp$ the cardinality of a set, respectively. The paper is organized as follows. In Section 2, we present some elementary properties and dimensional results on continued fractions and regularly increasing functions. Sections 3, 4 and 5 are devoted to the proofs of the main results.

\section{Preliminaries}
\subsection{Elementary properties of continued fractions}
For any $n\geq1$ and $(a_1,\ldots,a_n)\in \mathbb{N}^n$, we call
$$I_n(a_1,\ldots,a_n)=\{x\in(0,1):a_1(x)=a_1,\ldots,a_n(x)=a_n\}$$
a basic interval of order $n$. By \eqref{jd}, we know that all the points in $I_{n}(a_1, \ldots, a_n)$ have the same $p_n(x)$ and $q_n(x)$. Thus, we write
$p_n(a_1,\ldots,a_n)=p_n(x)=p_n$ and $q_n(a_1,\ldots,a_n)=q_n(x)=q_n$ for $x\in I_{n}(a_1, \ldots, a_n)$.
It is well known (see \cite[p. 4]{Khi64}) that $p_n$ and $q_n$ satisfy the following recursive formula:
\begin{equation}\label{ppqq}
\begin{cases}
p_{-1}=1,\ \ p_0=0,\ \ p_n=a_np_{n-1}+p_{n-2}\ (n\geq1);\cr
q_{-1}=0,\ \ \ q_0=1,\ \ q_n=a_nq_{n-1}+q_{n-2}\ (n\geq1).
\end{cases}
\end{equation}
\begin{proposition}
[{\cite[p. 18]{iosifescu2002metrical}}]\label{cd}
For any $(a_1,\ldots, a_n)\in\mathbb{N}^{n}$, $I_{n}(a_1,\ldots, a_n)$ is the interval with the endpoints
$p_n/q_n$ and $(p_n+p_{n-1})/(q_n+q_{n-1})$.  More precisely,
\begin{equation*}
I_{n}(a_1,\ldots, a_n)=
\begin{cases}
\big[\frac{p_n}{q_n},\frac{p_n+p_{n-1}}{q_n+q_{n-1}}\big),\ \ \ \text{if}\ n\ \text{is even},\cr
\big(\frac{p_n+p_{n-1}}{q_n+q_{n-1}},\frac{p_n}{q_n}\big],\ \ \ \text{if}\ n\ \text{is odd}.
\end{cases}
\end{equation*}
As a result, the length of $I_{n}(a_1, \ldots, a_n)$ equals to
\begin{equation*}
|I_{n}(a_1, \ldots, a_n)|=\frac{1}{q_n(q_n+q_{n-1})}.
\end{equation*}
\end{proposition}
Combining the second of formula \eqref{ppqq} and Proposition \ref{cd}, we deduce that 
\begin{equation}\label{lengthIn}
  2^{-(2n+1)}\prod_{k=1}^{n}a_k^{-2}\le |I_n(a_1,\ldots,a_n)|\le \prod_{k=1}^{n}a_k^{-2}.
\end{equation}

The following result can be viewed as the bounded distortion property of continued fractions.
\begin{lemma}(\cite[Lemma A.2]{Mor18})\label{lemma2.2}
  For any $(a_1, \ldots, a_n)\in \mathbb{N}^n$ and $(b_1, \ldots, b_k)\in \mathbb{N}^k$,
  $$\frac{1}{2}\le |\frac{I_{n+k}(a_1,\ldots,a_n,b_1,\ldots,b_k)|}{|I_n(a_1, \ldots, a_n)|\cdot |I_k(b_1, \ldots, b_k)|}\le 2.$$
  As a consequence, for any $(a_1,\ldots, a_n, \ldots, a_{n+k})\in \mathbb{N}^{n+k}$,
  $$\frac{1}{8}\le \frac{|I_{n+k}(a_1,\ldots, a_n, \ldots, a_{n+k})|}{|I_1(a_n)|\cdot |I_{n+k-1}(a_1,\ldots,a_{n-1}, a_{n+1}, \ldots, a_{n+k})|}\le 8.$$
\end{lemma}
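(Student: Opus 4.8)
The plan is to realise each cylinder as the image of a full interval under an explicit Möbius branch map and to control the distortion by the mean value theorem. Fix a word $(a_1,\ldots,a_n)$ and set $g_n(t):=(p_n+tp_{n-1})/(q_n+tq_{n-1})$ for $t\in[0,1]$, where $p_n,q_n$ are as in \eqref{ppqq}. From \eqref{ppqq} one has $[a_1,\ldots,a_n+t]=g_n(t)$, so $g_n$ is a homeomorphism of $[0,1]$ onto the closure of $I_n(a_1,\ldots,a_n)$ (its values at $t=0$ and $t=1$ are exactly the two endpoints appearing in Lemma \ref{cd}); feeding the tail of $x=[a_1,\ldots,a_n,b_1,\ldots,b_k,\ldots]$, which ranges over $I_k(b_1,\ldots,b_k)\subseteq[0,1]$, into $g_n$ yields
\[
I_{n+k}(a_1,\ldots,a_n,b_1,\ldots,b_k)=g_n\big(I_k(b_1,\ldots,b_k)\big).
\]
Using the standard identity $p_{n-1}q_n-p_nq_{n-1}=(-1)^n$, a direct differentiation gives $|g_n'(t)|=(q_n+tq_{n-1})^{-2}$, so that $(q_n+q_{n-1})^{-2}\le|g_n'(t)|\le q_n^{-2}$ for all $t\in[0,1]$, because $0\le q_{n-1}\le q_n$ by \eqref{ppqq}.

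Next I would invoke the mean value theorem: since $g_n$ is monotone there is $\xi\in[0,1]$ with $|I_{n+k}(a_1,\ldots,a_n,b_1,\ldots,b_k)|=|g_n'(\xi)|\cdot|I_k(b_1,\ldots,b_k)|$. Since $|I_n(a_1,\ldots,a_n)|=\big(q_n(q_n+q_{n-1})\big)^{-1}$ by Lemma \ref{cd},
\[
\frac{|I_{n+k}(a_1,\ldots,a_n,b_1,\ldots,b_k)|}{|I_n(a_1,\ldots,a_n)|\cdot|I_k(b_1,\ldots,b_k)|}=|g_n'(\xi)|\,q_n(q_n+q_{n-1}).
\]
From $|g_n'(\xi)|\le q_n^{-2}$ the right-hand side is at most $(q_n+q_{n-1})/q_n\le2$, and from $|g_n'(\xi)|\ge(q_n+q_{n-1})^{-2}$ it is at least $q_n/(q_n+q_{n-1})\ge1/2$; this is the first inequality. (The same two-sided bound can also be extracted from Lemma \ref{cd} together with the continuant identity $q_{n+k}(a_1,\ldots,a_n,b_1,\ldots,b_k)=q_nq_k(b_1,\ldots,b_k)+q_{n-1}q_{k-1}(b_2,\ldots,b_k)$, but the branch-map computation produces the sharp constants most directly.)

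For the consequence I would apply the first inequality three times. Put $A=(a_1,\ldots,a_{n-1})$ and $C=(a_{n+1},\ldots,a_{n+k})$, and write $P\asymp Q$ to mean $P/Q\in[1/2,2]$. Splitting the word $(A,a_n,C)$ as $A$ followed by $(a_n,C)$, and then $(a_n,C)$ as $(a_n)$ followed by $C$, gives $|I_{n+k}(A,a_n,C)|\asymp|I_{n-1}(A)|\,|I_{k+1}(a_n,C)|\asymp|I_{n-1}(A)|\,|I_1(a_n)|\,|I_k(C)|$; splitting $(A,C)$ as $A$ followed by $C$ gives $|I_{n+k-1}(A,C)|\asymp|I_{n-1}(A)|\,|I_k(C)|$. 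Dividing the first estimate by $|I_1(a_n)|\,|I_{n+k-1}(A,C)|$, the factors $|I_{n-1}(A)|$ and $|I_k(C)|$ cancel and the three constants (each in $[1/2,2]$) combine to give
\[
\frac18\le\frac{|I_{n+k}(A,a_n,C)|}{|I_1(a_n)|\,|I_{n+k-1}(A,C)|}\le8,
\]
which is the asserted bound; when $n=1$ the word $A$ is empty and one simply uses $I_0=(0,1)$, $q_0=1$, $q_{-1}=0$.

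I do not anticipate a genuine obstacle. The points that require a little care are the identification $I_{n+k}(a_1,\ldots,a_n,b_1,\ldots,b_k)=g_n(I_k(b_1,\ldots,b_k))$ (which rests on the recursion \eqref{ppqq} and Lemma \ref{cd}), the elementary inequality $0\le q_{n-1}\le q_n$, and correctly tracking the constant $2^3=8$ through the three applications in the second part.
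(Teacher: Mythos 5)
Your argument is correct, and it is worth noting that the paper itself offers no proof of this lemma: it is quoted verbatim from Moreira (\cite[Lemma A.2]{Mor18}), so there is no in-paper argument to compare against. Your route --- realising the cylinder $I_{n+k}(a_1,\ldots,a_n,b_1,\ldots,b_k)$ as the image of $I_k(b_1,\ldots,b_k)$ under the M\"obius branch $g_n(t)=(p_n+tp_{n-1})/(q_n+tq_{n-1})$, computing $|g_n'(t)|=(q_n+tq_{n-1})^{-2}$ via $p_{n-1}q_n-p_nq_{n-1}=(-1)^n$, and squeezing the mean-value point between $q_n^{-2}$ and $(q_n+q_{n-1})^{-2}$ --- is the standard bounded-distortion argument and does deliver the sharp constants $1/2$ and $2$, using only \eqref{ppqq}, $q_{n-1}\le q_n$, and Lemma \ref{cd}; the alternative you mention, via the continuant identity $q_{n+k}=q_nq_k(b_1,\ldots,b_k)+q_{n-1}q_{k-1}(b_2,\ldots,b_k)$ and hence $q_nq_k\le q_{n+k}\le 2q_nq_k$, is essentially Moreira's own phrasing, so the two are interchangeable here. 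Your derivation of the second inequality by three applications of the first (splitting $(A,a_n,C)$ as $A\cdot(a_n,C)$, then $(a_n,C)$ as $(a_n)\cdot C$, and $(A,C)$ as $A\cdot C$, with the $|I_{n-1}(A)|$ and $|I_k(C)|$ factors cancelling) is also correct and yields the constant $2^3=8$ as claimed, with the $n=1$ degenerate case handled by $|I_0|=1$. The only caveats are cosmetic: the identification $I_{n+k}(a,b)=g_n(I_k(b))$ should be understood up to endpoints (irrelevant for lengths), which you implicitly acknowledge by checking that $g_n(0)$, $g_n(1)$ reproduce the endpoints in Lemma \ref{cd}.
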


\subsection{Some useful lemmas for estimating Hausdorff dimension}
Let $\{n_k\}$ be a strictly increasing sequence of positive integers, and $\{s_k\}$ and $\{t_k\}$ be two sequences of positive numbers with $s_k, t_k\to \infty$ as $k\to \infty$. For any $M\in\mathbb{N}$, define
\begin{align}\label{nst}
  \nonumber E\left(\{n_k\}, \{s_k\}, \{t_k\}\right):= \Big\{x\in (0,1):& ~s_k<a_{n_k}\le s_k+t_k~\text{for all large}~k\in \mathbb{N}, \\
   &~1\leq a_j(x)\leq M~\text{for all}~j\neq n_k\Big\}.
\end{align}
For the sequences $\{n_k\}, \{s_k\}$ and $\{t_k\}$, we make the following assumptions:
\begin{itemize}
  \item[\textbf{(H1)}] $\{n_k\}$ satisfies that $n_k/k\to \infty$ as $k\to\infty$;
  \item[\textbf{(H2)}] $\{s_k\}$ and $\{t_k\}$ are logarithmically equivalent in the sense that
      $$\lim_{k\to\infty}\frac{\log s_k}{\log t_k}=1;$$
  \item[\textbf{(H3)}] there exist two real numbers $\alpha_1,\alpha_2$ such that 
  $$\alpha_1:=\lim_{k\to\infty}\frac{1}{n_k}\sum_{j=1}^{k} \log s_j\quad\text{and}\quad\alpha_2:=\lim_{k\to\infty} \frac{1}{n_k}\log s_k.$$
\end{itemize}
\begin{lemma}(\cite[Theorem A]{Fangkey})\label{fangkey}
  Under hypotheses \textbf{(H1)}, \textbf{(H2)} and \textbf{(H3)}, we have
  \begin{itemize}
      \item[(1)] when $\alpha_1=0$,
      $$\dim_{\mathrm{H}} E\left(\{n_k\}, \{s_k\}, \{t_k\}\right)=1,$$
      \item[(2)] when $\alpha_1\in (0,\infty)$,
  $$\dim_{\mathrm{H}} E\left(\{n_k\}, \{s_k\}, \{t_k\}\right)=\theta_1(\alpha_1, \alpha_2),$$
  where $\theta_1(\alpha_1, \alpha_2)$ is the unique real solution of the pressure equation
  $$P(\theta)=(2\alpha_1-\alpha_2)\theta-(\alpha_1-\alpha_2).$$ 
  \end{itemize}
  
\end{lemma}

The following dimensional result is useful for obtaining the lower bound estimates of the Hausdorff dimension of sets in continued fractions. 
Let $\{s_n\}$ and $\{t_n\}$ be two sequences of positive real numbers. Assume that $s_n$, $t_n\to\infty$ as $n\to\infty$, $s_n>t_n$ and
$\liminf\limits_{n\to\infty}\frac{s_n-t_n}{s_n}>0$.
For any $N\geq1$, let
\begin{equation}\label{Bst}
  B(\{s_n\}, \{t_n\}, N):= \{x\in (0,1): s_n-t_n<a_n(x) \le s_n+t_n, \forall n\ge N\}.
\end{equation}
\begin{lemma}(\cite[Lemma 2.3]{LiaoRams2019})\label{LemmaLR}
  For any $N\geq1$, we have 
  $$\dim_{\mathrm{H}}B(\{s_n\}, \{t_n\}, N)=\liminf_{n\to\infty}\frac{\sum_{k=1}^{n}\log t_k}{2\sum_{k=1}^{n+1}\log s_k-\log t_{n+1}}.$$
\end{lemma}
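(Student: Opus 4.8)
This is \cite[Lemma~2.3]{LiaoRams2019}; here is how I would argue it. The plan is to prove matching upper and lower bounds, abbreviating the claimed value by
\[
s_0:=\liminf_{n\to\infty}\frac{\sum_{k=1}^{n}\log t_k}{2\sum_{k=1}^{n+1}\log s_k-\log t_{n+1}}.
\]
Two preliminary reductions make the estimates clean. Since $s_n,t_n\to\infty$, the partial sums $\sum_{k=1}^{n}\log s_k$ and $\sum_{k=1}^{n}\log t_k$ grow superlinearly in $n$; hence any factor that is at most exponential in $n$ — such as the $2^{-(2n+1)}$ of \eqref{lengthIn}, the $8^{\pm n}$ from iterating Lemma~\ref{lemma2.2}, and the discrepancy between $t_n$ and $\sharp\{a\in\mathbb N:s_n-t_n<a\le s_n+t_n\}$ — contributes only $O(n)$ to logarithms and will be absorbed in every limit below. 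Moreover the hypothesis $\liminf_n (s_n-t_n)/s_n>0$ gives $a_n(x)\asymp s_n$ uniformly for $x\in B(\{s_n\},\{t_n\},N)$ and $n\ge N$, so by \eqref{lengthIn} every admissible cylinder satisfies $\log|I_n(a_1,\dots,a_n)|=-2\sum_{k=1}^{n}\log s_k+O(n)$. Finally, the defining condition in \eqref{Bst} is vacuous for $n<N$, so it suffices to fix $a_1,\dots,a_{N-1}$ and prove the formula for the resulting ``branch'' $B(\{s_n\},\{t_n\},N)\cap I_{N-1}(a_1,\dots,a_{N-1})$; the whole set is a countable union of branches and $\dim_{\mathrm H}$ is countably stable.

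For the upper bound I would cover a branch not by full cylinders but by \emph{intermediate} intervals. For admissible $(a_N,\dots,a_n)$, the admissible order-$(n+1)$ subcylinders of $I_n=I_n(a_1,\dots,a_n)$ occupy $\asymp t_{n+1}$ consecutive slots near $s_{n+1}$, and since order-$(n+1)$ cylinders inside $I_n$ are arranged monotonically and adjacently (Lemma~\ref{cd}), their union is a single subinterval $J_n\subseteq I_n$ with $|J_n|\asymp|I_n|\sum_{a}a^{-2}\asymp|I_n|\,t_{n+1}s_{n+1}^{-2}$, i.e.\ $\log|J_n|=-\big(2\sum_{k=1}^{n+1}\log s_k-\log t_{n+1}\big)+O(n)$. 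The intervals $J_n$ over all admissible $(a_N,\dots,a_n)$ cover the branch with mesh tending to $0$, and there are $\asymp\prod_{k=N}^{n}t_k$ of them, so for any $s>0$,
\[
\log\sum_{J_n}|J_n|^{s}=\sum_{k=1}^{n}\log t_k-s\Big(2\sum_{k=1}^{n+1}\log s_k-\log t_{n+1}\Big)+O(n).
\]
If $s>s_0$, then along a subsequence $n_j$ the defining ratio of $s_0$ is $\le s-\delta$ for some $\delta>0$, whence the right-hand side is $\le-\delta\big(2\sum_{k=1}^{n_j+1}\log s_k-\log t_{n_j+1}\big)+O(n_j)\to-\infty$ by superlinearity. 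Thus $\mathcal H^{s}(\text{branch})=0$ for every $s>s_0$, giving $\dim_{\mathrm H}B(\{s_n\},\{t_n\},N)\le s_0$.

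For the lower bound I would build a homogeneous Cantor subset and apply the mass distribution principle. Fix a branch; for each $k\ge N$ pick a block $A_k$ of $m_k\asymp t_k$ consecutive integers inside $(s_k-t_k,s_k+t_k]$, and let $F$ be the set of all $x$ with $a_k(x)\in A_k$ for $k\ge N$, so $F\subseteq B(\{s_n\},\{t_n\},N)$. Equip $F$ with the probability measure $\mu$ assigning mass $\big(\prod_{k=N}^{n}m_k\big)^{-1}$ to every admissible order-$n$ cylinder. For $x\in F$ write $I_n=I_n(a_1(x),\dots,a_n(x))$ and $J_n$ for the corresponding intermediate interval; for a radius $r$ with $|I_{n+1}|\le r\le|I_n|$ one estimates $\mu(B(x,r))$ in two regimes: for $|J_n|\le r\le|I_n|$, $B(x,r)$ meets only the cylinder $I_n$, so $\mu(B(x,r))\le\mu(I_n)$; for $|I_{n+1}|\le r\le|J_n|$, $B(x,r)$ meets $\asymp r/|I_{n+1}|$ admissible order-$(n+1)$ subcylinders of $I_n$, so $\mu(B(x,r))\asymp (r/|I_{n+1}|)\prod_{k=N}^{n+1}m_k^{-1}$. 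A short monotonicity check (using superlinearity of $\sum_{k}\log s_k$) shows that on the whole range $[|I_{n+1}|,|I_n|]$ the quotient $\log\mu(B(x,r))/\log r$ is minimized at $r=|J_n|$, where it equals $\big(\sum_{k=1}^{n}\log t_k+O(n)\big)\big/\big(2\sum_{k=1}^{n+1}\log s_k-\log t_{n+1}+O(n)\big)$. Letting $r\to0$ this gives $\liminf_{r\to0}\log\mu(B(x,r))/\log r=s_0$ for every $x\in F$, so the classical mass distribution principle yields $\dim_{\mathrm H}F\ge s_0$, hence $\dim_{\mathrm H}B(\{s_n\},\{t_n\},N)\ge s_0$. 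Combined with the upper bound this is the claim.

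The main obstacle is the lower-bound analysis at intermediate scales: one must verify that, among all radii $r\in[|I_{n+1}|,|I_n|]$, the smallest value of $\log\mu(B(x,r))/\log r$ occurs precisely at $r=|J_n|$ — the length of the cluster of admissible order-$(n+1)$ cylinders — which is exactly what produces the correction $-\log t_{n+1}$ in the denominator; a radius strictly inside either sub-range always yields a strictly larger quotient. Everything else is routine bookkeeping, legitimate because every multiplicative constant that appears is at most exponential in $n$ and therefore negligible against the superlinear sums $\sum_{k=1}^{n}\log s_k$ and $\sum_{k=1}^{n}\log t_k$ forced by $s_n,t_n\to\infty$.
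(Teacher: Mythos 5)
The paper does not prove this lemma at all: it is quoted from Liao and Rams \cite[Lemma 2.3]{LiaoRams2019}, so there is no in-paper argument to measure your proposal against. What you sketch is essentially the standard route (and, in substance, the route of the cited source): an upper bound from the natural cover by the clusters $J_n$ of admissible order-$(n+1)$ cylinders, whose lengths are what produce the corrected denominator $2\sum_{k=1}^{n+1}\log s_k-\log t_{n+1}$, and a lower bound from a uniformly distributed measure on a homogeneous Cantor subset combined with the local mass distribution principle; your bookkeeping reduction --- every multiplicative constant is at most exponential in $n$ and hence negligible against the superlinear sums $\sum_k\log s_k$, $\sum_k\log t_k$ forced by $s_n,t_n\to\infty$ --- is exactly the right way to dispose of the error terms, and the upper-bound computation is correct as written. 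The only place you are too quick is the regime $|J_n|\le r\le |I_n|$ in the lower bound: it is not true that $B(x,r)$ meets only the cylinder $I_n$, since a ball of radius comparable to $|I_n|$ can spill into sibling cylinders. The repair is standard: all admissible order-$n$ cylinders inside one parent have comparable lengths (because $a_n\asymp s_n$), and the admissible cluster sits at distance at least of order $|I_{n-1}|/s_n\gg|I_n|$ from any positive-measure points outside its parent, so $B(x,r)$ meets at most a bounded number of admissible order-$n$ cylinders, each of the same $\mu$-mass, giving $\mu(B(x,r))\le C\mu(I_n)$; a similar separation remark is needed in the regime $|I_{n+1}|\le r\le|J_n|$ before your monotonicity check identifies $r\approx|J_n|$ as the binding scale. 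With that correction supplied, your proposal is a correct proof of the lemma.
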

It is worth noting that
\[
\dim_{\rm H}B(\{s_{n}\},\{t_{n}\},N)=\dim_{\rm H}B(\{s_{n}\},\{t_{n}\},1).
\]
Indeed, this equality follows from the fact that the dimensional formula in Lemma \ref{LemmaLR} is not affected by a finite number of initial terms in the sequences $\{s_n\}$ and $\{t_n\}$. Moreover, the set $B(\{s_{n}\},\{t_{n}\}, N)$ can be expressed as a countable union of bi-Lipschitz images of $B(\{s_{n+N-1}\},\{t_{n+N-1}\},1)$. Since bi-Lipschitz maps preserve the Hausdorff dimension, the equality holds.

The construction of Cantor-type subsets is another effective method to obtain lower bounds for the Hausdorff dimension of fractal sets. A Cantor-type set
is defined as follows. Let $[0,1]=E_0\supseteq E_1\supseteq E_2\supseteq\ldots$ be a decreasing sequence of sets such that each $E_n$ is a union of finite number of disjoint closed intervals, with each interval of $E_n$ containing at least two intervals of $E_{n+1}$, and the maximum length of intervals in $E_n$ tending to $0$ as $n$ tending to infinity. Then the set
\begin{equation}\label{cse}
E:=\bigcap_{n\geq0}E_n
\end{equation}
is a totally disconnected subset of $[0,1]$. The following lemma provides a lower bound of $\dim_{\rm H} E$.
\begin{lemma}(\cite[Example 4.6]{falconer2004fractal})\label{ex}
  Suppose that for any positive integer $n\geq1$, each interval of $E_{n-1}$ contains at least $m_n$ intervals of $E_n$ which are separated by
gaps of at least $\theta_n$ in the general construction \eqref{cse}. If $m_n\geq2$ and $0<\theta_{n+1}<\theta_n$, then we have 
  $$\dim_{\mathrm{H}}E\ge \liminf_{n\to\infty}\frac{\log(m_1\cdots m_{n-1})}{-\log(m_n\theta_n)}.$$
\end{lemma}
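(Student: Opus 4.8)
The plan is to construct a natural mass distribution on $E$ and invoke the mass distribution principle. First I would build a Borel probability measure $\mu$ supported on $E$ as follows: put $\mu([0,1])=1$, and if $I$ is an interval of $E_{n-1}$ carrying mass $\mu(I)$, distribute this mass uniformly over the (at least $m_n$) intervals of $E_n$ contained in $I$. Since every interval of $E_{n-1}$ has at least $m_n$ children, each interval $J$ of $E_n$ then receives $\mu(J)\le(m_1m_2\cdots m_n)^{-1}$, while the mass of an interval of $E_{n-1}$ equals the sum of the masses of its children; this prescription is consistent on the nested family of construction intervals and extends to a Borel probability measure with $\operatorname{supp}\mu\subseteq E$. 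The second ingredient is purely geometric: because $0<\theta_{n+1}<\theta_n$, any two distinct intervals of $E_n$ are separated by a gap of length at least $\theta_n$. Indeed, their smallest common ancestor is an interval of some $E_j$ with $j<n$, and the two intervals then lie in distinct intervals of $E_{j+1}$ sharing that ancestor as parent, which are at distance at least $\theta_{j+1}\ge\theta_n$.

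Next I would estimate $\mu(U)$ for an arbitrary set $U\subseteq[0,1]$ of small diameter; replacing $U$ by its convex hull, I may assume $U$ is an interval. For $|U|<\theta_1$ let $k=k(U)$ be the unique integer with $\theta_{k+1}\le|U|<\theta_k$ (which exists since $\theta_n\downarrow 0$). As distinct intervals of $E_k$ lie $\ge\theta_k>|U|$ apart, $U$ meets at most one interval $I$ of $E_k$, so
\[
\mu(U)\le\mu(I)\le(m_1\cdots m_k)^{-1}.
\]
Moreover, the intervals of $E_{k+1}$ contained in $I$ are pairwise $\ge\theta_{k+1}$ apart, so $U$ meets at most $\lfloor|U|/\theta_{k+1}\rfloor+1\le 2|U|/\theta_{k+1}$ of them (using $|U|\ge\theta_{k+1}$), each of mass $\le(m_1\cdots m_{k+1})^{-1}$, whence
\[
\mu(U)\le\frac{2|U|}{\theta_{k+1}\,m_1\cdots m_{k+1}}.
\]

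Finally, fix $s$ with $0<s<\min\{1,\sigma\}$, where $\sigma$ is the $\liminf$ appearing in the statement (if $\sigma=0$ there is nothing to prove). For all sufficiently large $n$ one has $m_n\theta_n<1$ and $\log(m_1\cdots m_{n-1})/(-\log(m_n\theta_n))>s$, i.e.
\[
(m_1\cdots m_{n-1})^{-1}<(m_n\theta_n)^{s}.
\]
Applying this with $n=k+1$ — valid once $|U|$ is small enough that $k+1$ is large — and combining with the two bounds above, I split into cases. If $|U|\le m_{k+1}\theta_{k+1}$, the second bound together with $(m_1\cdots m_{k+1})^{-1}\le m_{k+1}^{s-1}\theta_{k+1}^{s}$ gives $\mu(U)\le 2|U|(m_{k+1}\theta_{k+1})^{s-1}=2|U|^{s}\big(|U|/(m_{k+1}\theta_{k+1})\big)^{1-s}\le 2|U|^{s}$. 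If instead $|U|>m_{k+1}\theta_{k+1}$, the first bound gives $\mu(U)\le(m_1\cdots m_k)^{-1}\le(m_{k+1}\theta_{k+1})^{s}\le|U|^{s}$. In either case $\mu(U)\le 2|U|^{s}$ for all intervals $U$ of sufficiently small diameter, so the mass distribution principle (as in \cite{falconer2004fractal}) yields $\mathcal H^{s}(E)\ge\mu(E)/2=1/2>0$ and hence $\dim_{\mathrm H}E\ge s$; letting $s\uparrow\sigma$ finishes the proof. I expect the main difficulty to be exactly this last interpolation: one must balance the combinatorial bound $(m_1\cdots m_k)^{-1}$ against the gap bound, and it is precisely at the scale $|U|\asymp m_{k+1}\theta_{k+1}$, where the two are comparable, that the factor $m_n$ is forced into the logarithm in the denominator of the dimension formula.
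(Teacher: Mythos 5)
Your proof is correct and is essentially the argument the paper relies on by citation: Falconer's Example 4.6 proves this bound by exactly this mass-distribution construction, counting how many level-$(k+1)$ intervals an interval $U$ with $\theta_{k+1}\le |U|<\theta_k$ can meet and then applying the mass distribution principle (your case split at $|U|\asymp m_{k+1}\theta_{k+1}$ is just the explicit form of Falconer's interpolation $\min\{a,b\}\le a^{1-s}b^{s}$). The only assertions you leave implicit, namely $\theta_n\to 0$ and $m_n\theta_n<1$ for large $n$, follow at once from the requirement that the maximal interval length in $E_n$ tends to $0$, so there is no gap.
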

To end this subsection, we present a method to estimate an upper bound of the Hausdorff dimension.
\begin{lemma}(\cite[Proposition 4.1]{falconer2004fractal})\label{sjgh}
  Suppose a set $F$ can be covered by $n_k$ sets of diameter as most $\delta_k\to 0$ as $k\to \infty$. Then
  $$\dim_{\mathrm{H}}F\le \liminf_{k\to \infty}\frac{\log n_k}{-\log\delta_k}.$$
\end{lemma}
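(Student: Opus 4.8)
My plan is to establish $\dim_{\mathrm H}L(\psi)=\theta(c)$ by a matching upper and lower bound, both resting on a structural description of the partial quotients of points in $L(\psi)$. Since $\psi(n)=e^{c\lfloor\sqrt n\rfloor}$ is constant on each block $[k^2,(k+1)^2-1]$, the relation $L_n(x)/\psi(n)\to1$ is equivalent to: for every $\varepsilon>0$ there is $N$ with $(1-\varepsilon)\psi(n)\le L_n(x)\le(1+\varepsilon)\psi(n)$ for all $n\ge N$; call this set $A_N(\varepsilon)$. Fixing $\varepsilon<(e^c-1)/(e^c+1)$, the inequality $a_ia_{i+1}\le L_i(x)\le(1+\varepsilon)\psi(i)$ gives $a_i(x)a_{i+1}(x)\le(1+\varepsilon)e^{c\lfloor\sqrt i\rfloor}$ for all $i\ge N$; and evaluating $L_{k^2}(x)\ge(1-\varepsilon)e^{ck}$ at the left end of the $k$-th block, while noting that every product $a_ia_{i+1}$ with $i<k^2$ is then at most $(1+\varepsilon)e^{c(k-1)}<(1-\varepsilon)e^{ck}$, forces a \emph{product-spike} $a_{k^2}(x)a_{k^2+1}(x)\in[(1-\varepsilon)e^{ck},(1+\varepsilon)e^{ck}]$ for all large $k$. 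Thus, after a harmless countable decomposition (according to $N$ and to the index beyond which the spikes are controlled), $L(\psi)$ consists of points whose consecutive products are bounded by $(1+\varepsilon)e^{c\lfloor\sqrt i\rfloor}$ and which carry a spike of size $\asymp e^{ck}$ at position $k^2$ for every large $k$.

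For the upper bound I would cover $A_N(\varepsilon)$ by the cylinders $I_{K^2}(a_1,\dots,a_{K^2})$ obeying these constraints and apply Lemma \ref{sjgh}. Cutting $\{1,\dots,K^2\}$ into the spike pairs $\{j^2,j^2+1\}$ and the $O(K)$ gap blocks between them, Lemma \ref{lemma2.2} yields $|I_{K^2}(\dots)|\le 2^{O(K)}\prod_{\text{pieces}}|I(\text{piece})|$ with $2^{O(K)}=e^{o(K^2)}$, and relaxing each product constraint to hold only within a piece makes the $s$-sum factorize. A spike-pair piece contributes $\sum_{ab\asymp e^{cj}}|I_2(a,b)|^s\asymp\sum_{ab\asymp e^{cj}}(ab)^{-2s}=e^{cj(1-2s)+O(\log j)}$ — the crucial gain $(ab)^{-2s}\le((1-\varepsilon)e^{cj})^{-2s}$ being exactly what the spike lower bound provides — while a gap block of length $m$ contributes at most the unrestricted sum $\sum_{(a_l)\in\mathbb N^m}|I_m(\dots)|^s=e^{mP(s)+o(m)}$ by the definition \eqref{pressure function} of $P$. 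Since $\sum_{j<K}j\sim K^2/2$ and the gap blocks exhaust $K^2-O(K)$ indices, the product of all these is $\exp\!\big(K^2[P(s)-c(s-\tfrac12)]+o(K^2)\big)$, which tends to $0$ for every $s$ with $P(s)<c(s-\tfrac12)$, i.e.\ for every $s>\theta(c)$, because $P$ is strictly decreasing, $s\mapsto c(s-\tfrac12)$ strictly increasing, and they meet only at $\theta(c)$. Hence $\mathcal H^s(A_N(\varepsilon))=0$ for $s>\theta(c)$ and $\dim_{\mathrm H}L(\psi)\le\theta(c)$ (here no limit in $\varepsilon$ is needed).

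For the lower bound I fix a large $M$ and build a Cantor set $\mathcal C_M\subseteq L(\psi)$ by placing, for each large $j$, a single large partial quotient $a_{j^2+1}\in\{n\in\mathbb N:(1-1/j)e^{cj}\le n\le e^{cj}\}$ flanked by $a_{j^2}=a_{j^2+2}=1$, and letting $a_i\in\{1,\dots,M\}$ at every other position; a direct check gives $L_n(x)/\psi(n)\in[1-1/\lfloor\sqrt n\rfloor,1]$ for $x\in\mathcal C_M$ and $n$ large, so $\mathcal C_M\subseteq L(\psi)$. To estimate $\dim_{\mathrm H}\mathcal C_M$ I would apply the mass distribution principle to the measure $\mu$ that is uniform over the ($\asymp e^{cj}$, up to a factor $j$) admissible values of $a_{j^2+1}$ and equals, on the remaining generic positions, the Gibbs measure on $E_M$ with parameter $s_M$, where $s_M$ solves $P_M(s_M)=c(s_M-\tfrac12)$ and $P_M$ is the pressure of the subsystem on $\{1,\dots,M\}$. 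Computing $\log\mu(I_p)$ and $\log|I_p|$ along the scales $p=j^2+2$, the $e^{cj}$ spike-multiplicities cancel the $e^{-2cj}$ spike length-factors \emph{precisely} because $P_M(s_M)=c(s_M-\tfrac12)$, giving $\mu(I_p)=|I_p|^{s_M+o(1)}$; since $-\log|I_p|\asymp p$ while a single spike perturbs it only by $O(\sqrt p)$, the same exponent persists at intermediate scales and passes to balls by the standard continued-fraction argument (passing to every second admissible value of the generic $a_i$ to create gaps comparable to the cylinders). Thus $\dim_{\mathrm H}\mathcal C_M\ge s_M$, and since $P_M(s)\uparrow P(s)$ for $s>\tfrac12$, we get $s_M\uparrow\theta(c)$, hence $\dim_{\mathrm H}L(\psi)\ge\theta(c)$.

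I expect the main obstacle to be the lower bound. Because the spike partial quotients grow (roughly like $e^{cj}$), cylinders of a fixed order in $\mathcal C_M$ have vastly different lengths and the ratios $|I_{p+1}|/|I_p|$ are unbounded, so Lemma \ref{ex} is not available with a uniform gap, and one must track the local dimension of the Gibbs measure directly through these ``wild'' scales — this is where the bulk of the careful estimation lies. A secondary point is to record the standard facts that $P_M(s)\uparrow P(s)$ on $(\tfrac12,\infty)$ and that the equation $P(\theta)=c(\theta-\tfrac12)$ has a unique solution: since $P$ is strictly decreasing with $P(1)=0$ and a singularity at $\tfrac12$, while $\theta\mapsto c(\theta-\tfrac12)$ is strictly increasing and vanishes at $\tfrac12$, the solution $\theta(c)$ lies in $(\tfrac12,1)$, is decreasing in $c$, and satisfies $\theta(c)\to1$ as $c\to0^+$ and $\theta(c)\to\tfrac12$ as $c\to\infty$, which also yields the remark that $c\mapsto\theta(c)$ decreases from $1$ to $1/2$.
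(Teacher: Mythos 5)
Your proposal does not prove the statement in question. The statement is Lemma \ref{sjgh}, the elementary covering bound quoted from Falconer: if $F$ admits covers by $n_k$ sets of diameter at most $\delta_k\to0$, then $\dim_{\mathrm H}F\le\liminf_k\frac{\log n_k}{-\log\delta_k}$. What you have written is instead a proof sketch of Theorem \ref{mainthm2} (that $\dim_{\mathrm H}L(\psi)=\theta(c)$), and in your upper-bound step you explicitly \emph{invoke} Lemma \ref{sjgh} as a tool; an argument that presupposes the lemma cannot serve as its proof. So, measured against the statement actually posed, the proposal is off target, independently of whether its treatment of $L(\psi)$ is sound (your lower-bound construction there would in any case need the careful local-dimension analysis you yourself flag, and the paper handles that theorem differently, via the set $\Gamma(c)$ for the upper bound and the quoted result of Fang--Moreira--Zhang for the lower bound).

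For the record, the lemma itself has a short standard proof, which is why the paper only cites \cite[Proposition 4.1]{falconer2004fractal}: let $s$ be any number with $s>\liminf_{k\to\infty}\frac{\log n_k}{-\log\delta_k}$. Then there is a subsequence $(k_j)$ along which $\frac{\log n_{k_j}}{-\log\delta_{k_j}}<s$, i.e.\ $n_{k_j}\delta_{k_j}^{\,s}\le1$. Using the given cover of $F$ by $n_{k_j}$ sets of diameter at most $\delta_{k_j}$ as a $\delta_{k_j}$-cover, we get $\mathcal{H}^{s}_{\delta_{k_j}}(F)\le n_{k_j}\delta_{k_j}^{\,s}\le1$; since $\delta_{k_j}\to0$, this yields $\mathcal{H}^{s}(F)\le1<\infty$, hence $\dim_{\mathrm H}F\le s$. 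As $s$ was arbitrary above the liminf, the stated inequality follows.
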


\subsection{Regularly increasing functions }
In this subsection, we collect some basic properties of regularly increasing functions.
\begin{lemma}[\cite{V. M}]\label{i1}
Let $f$ be a regularly increasing function with index $\rho$.
 \begin{itemize}
\item[(1)] If $\rho=0$, then for any $\varepsilon>0$, $x^{-\varepsilon}f(x)$ is ultimately decreasing and $x^{\varepsilon}f(x)$ is ultimately increasing. Moreover, we have 
\[\lim_{x\to\infty}x^{-\varepsilon}f(x)=0\ \ 
\text{and}\ \ \lim_{x\to\infty}x^{\varepsilon}f(x)=\infty.\]
\item[(2)] If $\rho\in(0,\infty)$, then for $\alpha>0$ and $C>0$, $f(x^{\alpha}/C)$ is regularly increasing with index $\alpha\rho$.
\item[(3)] If $\rho\in [0,\infty)$, then for $C\in \mathbb{R}$, $\lim\limits_{x\to\infty}f(x+C)/f(x)=1$.
\item[(4)] If $\rho\in[0,\infty]$, then
      $\lim\limits_{x\to\infty}\log f(x)/\log x=\rho$.
    \end{itemize}
  \end{lemma}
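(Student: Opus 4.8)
The plan is to derive all four statements from a single elementary identity and then handle the remaining work case by case. Since $f\in C^{1}$ with $f,f'>0$, for $c\le x_{0}\le x$ one can write
\[
\log f(x)=\log f(x_{0})+\int_{x_{0}}^{x}\frac{f'(t)}{f(t)}\,dt=\log f(x_{0})+\int_{x_{0}}^{x}\frac{1}{t}\cdot\frac{t f'(t)}{f(t)}\,dt,
\]
and by \eqref{def1} the inner factor $tf'(t)/f(t)$ tends to $\rho$; combining this with $\int_{x_{0}}^{x}t^{-1}\,dt=\log(x/x_{0})$ will be the workhorse throughout. For part~(4) I would simply apply L'H\^opital's rule to $\log f(x)/\log x$ when $\rho<\infty$: the denominator tends to $\infty$ and $(\log f)'(x)/(\log x)'=xf'(x)/f(x)\to\rho$, so the quotient tends to $\rho$; when $\rho=\infty$, I would instead fix $M>0$ and $x_{0}$ with $tf'(t)/f(t)\ge M$ for $t\ge x_{0}$, note the identity gives $\log f(x)\ge\log f(x_{0})+M\log(x/x_{0})$, hence $\liminf_{x\to\infty}\log f(x)/\log x\ge M$, and let $M\to\infty$.

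For part~(1) (so $\rho=0$), fix $\varepsilon>0$. The monotonicity claims follow from the sign of the derivatives
\[
\frac{d}{dx}\bigl(x^{-\varepsilon}f(x)\bigr)=x^{-\varepsilon-1}f(x)\Bigl(\frac{xf'(x)}{f(x)}-\varepsilon\Bigr),\qquad
\frac{d}{dx}\bigl(x^{\varepsilon}f(x)\bigr)=x^{\varepsilon-1}f(x)\Bigl(\frac{xf'(x)}{f(x)}+\varepsilon\Bigr):
\]
the first is negative once $xf'(x)/f(x)<\varepsilon$, which holds ultimately since this ratio tends to $0$, while the second is positive for every $x$ because $xf'(x)/f(x)>0$. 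The limit $x^{\varepsilon}f(x)\to\infty$ is immediate from $x^{\varepsilon}f(x)\ge f(x)\to\infty$. For $x^{-\varepsilon}f(x)\to 0$ I would choose $x_{1}$ with $tf'(t)/f(t)<\varepsilon/2$ for $t\ge x_{1}$, use the identity to get $f(x)\le f(x_{1})x_{1}^{-\varepsilon/2}x^{\varepsilon/2}$, and conclude $x^{-\varepsilon}f(x)\le f(x_{1})x_{1}^{-\varepsilon/2}x^{-\varepsilon/2}\to 0$. For part~(3), with $\rho<\infty$ and $C\in\mathbb{R}$, I would fix $\varepsilon>0$ and $x_{0}$ with $tf'(t)/f(t)\le\rho+1$ for $t\ge x_{0}$; for $C\ge 0$ and $x\ge x_{0}$ this yields $0\le\log f(x+C)-\log f(x)=\int_{x}^{x+C}t^{-1}\cdot tf'(t)/f(t)\,dt\le(\rho+1)C/x\to 0$, and the case $C<0$ follows symmetrically (integrate over $[x+C,x]$, or apply the $C>0$ case to $y=x+C$ and take reciprocals), so $f(x+C)/f(x)\to 1$.

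For part~(2), with $\rho\in(0,\infty)$, $\alpha>0$, $C>0$, I would set $g(x)=f(x^{\alpha}/C)$ and verify the definition directly: on $\{x:x^{\alpha}/C\ge c\}=[(cC)^{1/\alpha},\infty)$ the function $g$ is $C^{1}$, positive, and $g(x)\to\infty$ as $x\to\infty$ because $x^{\alpha}/C\to\infty$, while $g'(x)=f'(x^{\alpha}/C)\,\alpha x^{\alpha-1}/C>0$; and setting $u=x^{\alpha}/C$,
\[
\frac{xg'(x)}{g(x)}=\alpha\cdot\frac{u f'(u)}{f(u)}\longrightarrow\alpha\rho\qquad(x\to\infty),
\]
so $g$ is regularly increasing with index $\alpha\rho$. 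I expect the only genuinely delicate point to be the limit $x^{-\varepsilon}f(x)\to 0$ in part~(1): monotonicity of $x^{-\varepsilon}f(x)$ by itself does not force the limit to be $0$, so one must extract the Karamata-type growth bound $f(x)=O(x^{\varepsilon/2})$ from $\rho=0$ via the integral identity; every other step is routine bookkeeping with that identity and L'H\^opital's rule.
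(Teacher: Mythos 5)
Your argument is correct in all four parts, and it is worth noting that the paper itself gives no proof of this lemma at all: it is quoted from Mari\'c's monograph on regular variation, where the statements are established for the broader class of measurable regularly varying functions via the Karamata representation theorem. What you do instead is exploit the paper's $C^1$ definition \eqref{def1} directly, and your identity $\log f(x)=\log f(x_0)+\int_{x_0}^{x}t^{-1}\bigl(tf'(t)/f(t)\bigr)\,dt$ is exactly the differentiable-case Karamata representation, so nothing is lost in this setting. The individual steps all check out: L'H\^opital in the $\infty/\infty$ form gives (4) for $\rho<\infty$ (the numerator does tend to $\infty$ since $f\to\infty$), and your lower bound $\log f(x)\ge \log f(x_0)+M\log(x/x_0)$ handles $\rho=\infty$; the derivative-sign computations in (1) are right, and you correctly identify the one non-trivial point there, namely that ultimate monotonicity of $x^{-\varepsilon}f(x)$ does not by itself give the limit $0$, which you repair with the bound $f(x)\le f(x_1)x_1^{-\varepsilon/2}x^{\varepsilon/2}$; the estimate $0\le \log f(x+C)-\log f(x)\le(\rho+1)C/x$ settles (3), with the $C<0$ case reducing to the $C>0$ case; and (2) is a direct chain-rule verification of the definition. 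The trade-off relative to the cited source is that your proof is elementary and self-contained but relies on smoothness and strict positivity of $f'$, which the paper's definition assumes anyway, whereas the reference's machinery covers merely measurable functions; for the purposes of this paper your route is entirely adequate and arguably preferable, since it makes the preliminaries self-contained.
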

  
\begin{lemma}[\cite{fang2021largest}]\label{i2}
Let $f$ be a regularly increasing function with index $\rho$. 
  \begin{itemize}
    \item[(1)]If $\rho\in(0,1)$, then
    $\lim\limits_{x\to\infty}(f(x+1)-f(x))=0$.
    \item[(2)]If $\rho\in(1,\infty]$, then
    $\lim\limits_{x\to\infty}(f(x+1)-f(x))=\infty$.
    \item[(3)] If $\rho=0$, then for any $\alpha>0$, letting $g(x)=x/f(x^{\alpha})$ and $h(x)=xf(x^{\alpha})$, we have $f$ and $g$ are ultimately increasing. Moreover,
    \[\lim_{x\to\infty}(g(x+1)-g(x))=0\ \ \text{and}\ \ 
\lim_{x\to\infty}(h(x+1)-h(x))=\infty.\]
  \end{itemize}
\end{lemma}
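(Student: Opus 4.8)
The plan is to reduce each of the three assertions, through the mean value theorem applied on the interval $[x,x+1]$, to a statement about the asymptotic size of a derivative, and then to extract that asymptotics from the defining relation $\lim_{x\to\infty}xf'(x)/f(x)=\rho$ together with its consequence $\log f(x)/\log x\to\rho$ recorded in Lemma \ref{i1}(4). The recurring device is the factorisation
\[
 f'(\xi)=\frac{f(\xi)}{\xi}\cdot\frac{\xi f'(\xi)}{f(\xi)},
\]
in which the second factor converges to $\rho$ while the first is controlled by the power-type bounds on $f$ coming from Lemma \ref{i1}(4). I will use repeatedly the following remark: if $G(x+1)-G(x)=G'(\xi_x)$ with $\xi_x\in(x,x+1)$ and $G'(t)$ has a limit (finite or $+\infty$) as $t\to\infty$, then $G(x+1)-G(x)$ has the same limit, since $\xi_x\to\infty$.

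For part (1), with $\rho\in(0,1)$, I would fix $\varepsilon>0$ such that $\rho+\varepsilon<1$; by Lemma \ref{i1}(4) one has $f(x)\le x^{\rho+\varepsilon}$ for all large $x$, hence $f(x)/x\le x^{\rho+\varepsilon-1}\to0$, and since $xf'(x)/f(x)\to\rho$ stays bounded the factorisation gives $f'(x)\to0$, whence $f(x+1)-f(x)\to0$. For part (2) with $\rho\in(1,\infty)$ I would instead fix $\varepsilon>0$ with $\rho-\varepsilon>1$, so that $f(x)\ge x^{\rho-\varepsilon}$ eventually and $f(x)/x\to\infty$, while $xf'(x)/f(x)$ is eventually $\ge 1$ (its limit exceeds $1$); hence $f'(x)\to\infty$ and $f(x+1)-f(x)\to\infty$. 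For $\rho=\infty$ the same scheme works: $\log f(x)/\log x\to\infty$ forces $f(x)\ge x^{2}$ eventually, so $f(x)/x\to\infty$, and $xf'(x)/f(x)\to\infty$ is in particular eventually $\ge 1$.

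For part (3), note first that $f$ is increasing on its domain because $f'>0$ by definition. Both $g(x)=x/f(x^{\alpha})$ and $h(x)=xf(x^{\alpha})$ are $C^1$ for large $x$ (since $x\mapsto x^{\alpha}$ is $C^1$ on $(0,\infty)$ and $f\in C^1$ with $f>0$), and I would compute directly
\[
 g'(x)=\frac{1}{f(x^{\alpha})}\left(1-\alpha\,\frac{x^{\alpha}f'(x^{\alpha})}{f(x^{\alpha})}\right),\qquad
 h'(x)=f(x^{\alpha})\left(1+\alpha\,\frac{x^{\alpha}f'(x^{\alpha})}{f(x^{\alpha})}\right).
\]
Since $x^{\alpha}\to\infty$ and the index of $f$ is $0$, the ratio $x^{\alpha}f'(x^{\alpha})/f(x^{\alpha})$ tends to $0$; consequently the bracketed factors tend to $1$, so $g'(x)>0$ for all large $x$ (giving that $g$ is ultimately increasing) and $h'(x)>0$. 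Moreover $1/f(x^{\alpha})\to0$ and $f(x^{\alpha})\to\infty$ (since $f\to\infty$ and $x^{\alpha}\to\infty$), so $g'(x)\to0$ and $h'(x)\to\infty$; the mean value theorem then yields $g(x+1)-g(x)\to0$ and $h(x+1)-h(x)\to\infty$.

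I expect no substantial obstacle: once Lemma \ref{i1}(4) is in hand the whole argument is elementary calculus. The only points needing a little attention are the uniform passage from the limit of the derivative to the limit of the unit difference (handled by the observation that the mean value point tends to infinity), and the case $\rho=\infty$ in part (2), where Lemma \ref{i1}(3) is unavailable and one must argue directly from $\log f(x)/\log x\to\infty$.
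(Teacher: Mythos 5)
Your argument is correct, and there is nothing in the paper to compare it against: the paper imports this lemma by citation from Fang and Liu \cite{fang2021largest} and gives no proof of its own, so a self-contained verification is exactly what is needed. Your route --- writing $f'(\xi)=\frac{f(\xi)}{\xi}\cdot\frac{\xi f'(\xi)}{f(\xi)}$, controlling the first factor through the power bounds $x^{\rho-\varepsilon}\le f(x)\le x^{\rho+\varepsilon}$ supplied by Lemma \ref{i1}(4), and passing from the limit of the derivative to the limit of the unit increment via the mean value theorem with $\xi_x\in(x,x+1)\to\infty$ --- is sound in all three parts. I checked the two derivative computations in part (3): indeed $g'(x)=\frac{1}{f(x^{\alpha})}\bigl(1-\alpha\frac{x^{\alpha}f'(x^{\alpha})}{f(x^{\alpha})}\bigr)$ and $h'(x)=f(x^{\alpha})\bigl(1+\alpha\frac{x^{\alpha}f'(x^{\alpha})}{f(x^{\alpha})}\bigr)$, and since $\rho=0$ forces $x^{\alpha}f'(x^{\alpha})/f(x^{\alpha})\to0$ while $f(x^{\alpha})\to\infty$, your conclusions $g'\to0$, $h'\to\infty$, and ultimate monotonicity follow; the monotonicity of $f$ itself is immediate from $f'>0$ in the definition. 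Your handling of $\rho=\infty$ in part (2) directly from $\log f(x)/\log x\to\infty$ (so $f(x)\ge x^2$ eventually and $xf'(x)/f(x)\ge1$ eventually) correctly covers the endpoint case, which is the only place where a careless appeal to the finite-index bounds would fail.
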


\section{Proof of Theorem \ref{main thm1}}
This section gives a proof of Theorem \ref{main thm1}. Our idea is mainly inspired by Fang, Moreira and Zhang \cite{Fangkey}, Hu, Hussain, and Yu \cite{hu2021limit} and Liao, Rams \cite{LiaoRams2}. We divide the proof into three cases. Recall that 
\[L(\varphi)=\left\{x\in (0,1):\lim_{n\to \infty}\frac{L_n(x)}{\varphi(n)}=1\right\},\] where $\varphi:\mathbb{R^+}\to\mathbb{R^+}$ is an increasing function such that $\log \varphi$ is a regularly increasing  with index $\rho$.
\subsection{\textbf{The case $0\leq\rho<1/2$}.}
 Using the properties of the regularly increasing function $\log\varphi$ with index $\rho$, our strategy is to construct a suitable Cantor-type subset $E(\{n_k\},\{s_k\},\{t_k\})$, defined as \eqref{nst}, of $L(\varphi)$.
 
  Since $\log\varphi$ is regularly increasing with index $\rho$, satisfying $0\le \rho< 1/2$. By Lemma \ref{i1} (4), we have
 \begin{equation}\label{jx11}
   \lim_{n\to\infty}\frac{\log\log \varphi(n)}{\log n}=\rho.
 \end{equation}
For any $0<\delta<\frac{1}{2}-\rho$, it follows from \eqref{jx11} that $\log\varphi(n)\le n^{\rho+\delta}$ for sufficiently large $n$. Thus,
\begin{equation}\label{ghn}
\limsup_{n\to\infty}\frac{\log\varphi(n)}{\sqrt{n}}=0.
\end{equation}
For $k\ge 1$, let $n_k=k^2$, $s_k=\lfloor\varphi(n_k)\rfloor$ and $t_k=\lfloor\varphi(n_k)\rfloor/k$. Define 
\begin{align}\label{subset1}
  E(\{n_k\},\{s_k\},\{t_k\})=\big\{x\in (0,1):\ &\nonumber \lfloor\varphi(n_k)\rfloor\le a_{n_k}(x)\le(1+\frac{1}{k})\lfloor\varphi(n_k)\rfloor,~\text{for all}~k\ge 1, \\
  &a_i(x)=1\ \text{for any}\ i\neq n_k\big\}.
\end{align}
\begin{proposition}\label{bh2}
  Let $E(\{n_k\},\{s_k\},\{t_k\})$ be defined as in (\ref{subset1}). Then we have 
  $$E(\{n_k\},\{s_k\},\{t_k\})\subseteq L(\varphi).$$
\end{proposition}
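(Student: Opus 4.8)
The plan is to show directly that for every $x\in E_M(\varphi)$ the ratio $L_n(x)/\varphi(n)$ tends to $1$. The structure of $E_M(\varphi)$ is that at each position $m_k=k^2+1$ we place the large partial quotient $a_{m_k}(x)=\lfloor\varphi(m_k)\rfloor$, with both neighbours equal to $1$, while all other partial quotients are bounded by $M$. So the only products $a_i(x)a_{i+1}(x)$ that can be large are those straddling an index $m_k$, and these equal $a_{m_k-1}(x)a_{m_k}(x)=\lfloor\varphi(m_k)\rfloor$ and $a_{m_k}(x)a_{m_k+1}(x)=\lfloor\varphi(m_k)\rfloor$; every other product is at most $M^2$. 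Hence for $n$ lying between two consecutive special indices, say $m_k\le n<m_{k+1}$, we have exactly
\[
L_n(x)=\max\bigl\{\lfloor\varphi(m_k)\rfloor,\ M^2\bigr\}=\lfloor\varphi(m_k)\rfloor
\]
once $k$ is large enough that $\varphi(m_k)>M^2$ (which holds eventually since $\varphi\to\infty$). For $n$ in the range $m_k\le n<m_{k+1}$ this gives $\varphi(m_k)/\varphi(n)\cdot(\text{correction})\le L_n(x)/\varphi(n)\le \varphi(m_k)/\varphi(m_k)$, and since $\varphi$ is increasing, $\varphi(m_k)\le\varphi(n)\le\varphi(m_{k+1}-1)\le\varphi(m_{k+1})$.

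The crux is therefore to check that $\varphi(m_{k+1})/\varphi(m_k)\to 1$ as $k\to\infty$, equivalently $\log\varphi(m_{k+1})-\log\varphi(m_k)\to 0$. Here $m_{k+1}-m_k=2k+1$, which is not bounded, so Lemma \ref{i1}(3) does not apply directly; instead I would use the slow-growth estimate coming from $\rho<1/2$. From \eqref{jx11} (Lemma \ref{i1}(4)) we have $\log\varphi(n)\le n^{\rho+\delta}$ for large $n$ with $\rho+\delta<1/2$, and more usefully, since $\log\varphi$ is $C^1$ with $n(\log\varphi)'(n)/\log\varphi(n)\to\rho$, we get $(\log\varphi)'(n)\le (\rho+\delta)n^{\rho+\delta-1}$ eventually. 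Then by the mean value theorem
\[
\log\varphi(m_{k+1})-\log\varphi(m_k)\le (2k+1)\cdot(\rho+\delta)\,m_k^{\rho+\delta-1}\le C\,k\cdot (k^2)^{\rho+\delta-1}=C\,k^{2\rho+2\delta-1},
\]
and since $2\rho+2\delta-1<0$ this tends to $0$. This is the step I expect to be the main (though modest) obstacle: one must extract a derivative bound, not merely the crude bound \eqref{ghn}, from the regular-variation hypothesis, and handle the unboundedness of the gaps $m_{k+1}-m_k$.

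With that in hand the argument closes quickly: given $x\in E_M(\varphi)$ and $\varepsilon>0$, choose $k_0$ so large that for $k\ge k_0$ one has $\varphi(m_k)>M^2$ and $\varphi(m_{k+1})/\varphi(m_k)<1+\varepsilon$. For $n\ge m_{k_0}$ pick $k\ge k_0$ with $m_k\le n<m_{k+1}$; then $L_n(x)=\lfloor\varphi(m_k)\rfloor$, so
\[
\frac{L_n(x)}{\varphi(n)}\le\frac{\varphi(m_k)}{\varphi(n)}\le\frac{\varphi(m_k)}{\varphi(m_k)}=1,
\qquad
\frac{L_n(x)}{\varphi(n)}\ge\frac{\lfloor\varphi(m_k)\rfloor}{\varphi(m_{k+1})}\ge\frac{\varphi(m_k)-1}{\varphi(m_{k+1})}\ge\frac{\varphi(m_k)-1}{(1+\varepsilon)\varphi(m_k)},
\]
which is at least $(1+\varepsilon)^{-1}-o(1)$. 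Letting $n\to\infty$ and then $\varepsilon\to0$ yields $\lim_{n\to\infty}L_n(x)/\varphi(n)=1$, i.e. $x\in L(\varphi)$. Since $x\in E_M(\varphi)$ was arbitrary, $E_M(\varphi)\subseteq L(\varphi)$.
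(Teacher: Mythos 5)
Your proposal is correct and follows essentially the same strategy as the paper: for $x\in E_M(\varphi)$ and $m_k\le n<m_{k+1}$ with $k$ large enough that $\varphi(m_k)>M^2$, identify $L_n(x)$ as $\lfloor\varphi(m_j)\rfloor$ for $j\in\{k,k+1\}$, squeeze $L_n(x)/\varphi(n)$ between $(\varphi(m_k)-1)/\varphi(m_{k+1})$ and roughly $1$, and thereby reduce everything to the single limit $\varphi(m_{k+1})/\varphi(m_k)\to 1$. The only place you diverge is in proving that limit: the paper passes to the rescaled function $\log\varphi(x^2)$, uses Lemma \ref{i1}(2) to say it is regularly increasing with index $2\rho<1$, and then applies Lemma \ref{i2}(1) to get $\log\varphi(m_{k+1})-\log\varphi(m_k)\to0$; you instead extract the pointwise derivative bound $(\log\varphi)'(t)\le(\rho+\delta)t^{\rho+\delta-1}$ directly from \eqref{def1} and \eqref{jx11} and apply the mean value theorem over $[m_k,m_{k+1}]$, noting that the interval length $2k+1$ is killed by the factor $t^{\rho+\delta-1}$ evaluated near $k^2$. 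Both routes are valid; yours is more self-contained and has the minor virtue of covering $\rho=0$ transparently (Lemma \ref{i1}(2) is stated only for $\rho\in(0,\infty)$, though the needed conclusion is easy to check at $\rho=0$), while the paper's rescaling converts the growing gaps $m_{k+1}-m_k$ into unit steps so that the cited regular-variation lemmas apply verbatim.
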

\begin{proof}
Consider the function $\log\varphi\left(x^2\right)$. From Lemma \ref{i1} (2), we deduce that $\log\varphi\left(x^2\right)$ is regularly increasing with index $2\rho<1$. Then, by Lemma \ref{i2} (1), we have
\[
\lim_{k\to\infty}\left(\log\varphi(n_{k+1})-\log\varphi(n_k)\right)=0~~ \text{and}~~
\lim_{k\to\infty}\frac{\varphi(n_{k+1})}{\varphi(n_k)}=1.
\]
Let $x\in E(\{n_k\},\{s_k\},\{t_k\})$ be fixed, we have
\[\lfloor\varphi(n_k)\rfloor\le a_{n_k}(x)a_{n_k+1}(x)\le \left(1+\frac{1}{k}\right)\lfloor\varphi(n_k)\rfloor.\]
For any sufficiently large $n$, there exists a positive integer $k$ such that $n_k\le n< n_{k+1}$. Since $\varphi$ is increasing, we have $\varphi(n_k)\le \varphi(n)< \varphi(n_{k+1})$
  and
  $$L_n(x)=\max\{a_1(x)a_2(x),\ldots,a_n(x)a_{n+1}(x)\}=a_{n_k}(x)a_{n_k+1}(x).$$
   Therefore, we obtain
  $$1=\liminf_{k\to \infty}\frac{\varphi(m_k)-1}{\varphi(m_{k+1})}
  \le \liminf_{n\to \infty}\frac{L_n(x)}{\varphi(n)}
  \le \limsup_{n\to \infty}\frac{L_n(x)}{\varphi(n)}
  \le \limsup_{k\to \infty}\frac{(1+1/k)\lfloor\varphi(m_k)\rfloor}{\varphi(m_k)}=1.$$
\end{proof}

By the definition of $n_k$, we deduce from \eqref{ghn} that
\begin{align*}
    \alpha_1=&\lim_{k\to\infty}\frac{1}{n_k}\sum_{j=1}^k\log s_j=\lim_{k\to\infty}\frac{1}{k^2}\sum_{j=1}^k\log\lfloor\varphi(j^2)\rfloor\\
    &\le \limsup_{n\to\infty}\frac{1}{k^2}\left( \log\varphi\left(1^2\right)+ \log\varphi\left(2^2\right)+\cdots+  \log\varphi\left(k^2\right)\right)\\
&\le\limsup_{n\to\infty} \frac{\log \varphi\left(k^2\right)}{k}=0.
\end{align*}
Hence, by using Lemma \ref{fangkey} (1), we deduce that
\[\dim_{\mathrm{H}}L(\varphi)\ge \dim_{\mathrm{H}}E(\{n_k\},\{s_k\},\{t_k\})=1.\]

\subsection{\textbf{The case $1/2<\rho\le 1$}.} 
In this subsection, we divide the proof into three parts: the lower bounds for the case $1/2<\rho<1$ and $\rho=1$, the upper bound for the case $1/2<\rho\le 1$.

\subsubsection{Lower bound for the case $1/2<\rho<1$.}
To establish a lower bound for $\dim_{\rm{H}}L(\varphi)$ in the case $1/2<\rho<1$, we construct a Cantor-type subset of $L(\varphi)$ using Lemma \ref{LemmaLR}.
\begin{proposition}
For any $n\geq1$, let $s_n=\sqrt{\varphi(n)}$ and $t_n=\sqrt{\varphi(n)/n}$. Then, we have
  $$B\left(\{s_n\}, \{t_n\}, 1\right)\subseteq L(\varphi),$$
  where $B(\{s_n\}, \{t_n\}, 1)$ is defined as in (\ref{Bst}).  
\end{proposition}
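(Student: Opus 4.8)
The plan is to establish the inclusion pointwise. Fix an arbitrary $x\in B(\{s_n\},\{t_n\},1)$; the goal is to show $L_n(x)/\varphi(n)\to 1$, which is exactly the membership condition for $L(\varphi)$. The argument splits into two steps: first I would show that the individual products $a_n(x)a_{n+1}(x)$ are asymptotically equal to $\varphi(n)$, and then I would upgrade this to the same asymptotic for the running maxima $L_n(x)$, exploiting that $\varphi$ is increasing and tends to infinity.

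For the first step, I would unpack the definition \eqref{Bst}: for every $n\ge 1$ we may write $a_n(x)=\sqrt{\varphi(n)}\,(1+\theta_n)$ with $|\theta_n|\le t_n/s_n=n^{-1/2}$, so $\theta_n\to 0$. Since $\log\varphi$ is regularly increasing with index $\rho\in(1/2,1)\subset(0,1)$, Lemma \ref{i2} (1) applied to $f=\log\varphi$ gives $\log\varphi(n+1)-\log\varphi(n)\to 0$, hence $\varphi(n+1)/\varphi(n)\to 1$. Therefore
\[
\frac{a_n(x)a_{n+1}(x)}{\varphi(n)}=\sqrt{\frac{\varphi(n+1)}{\varphi(n)}}\,(1+\theta_n)(1+\theta_{n+1})\longrightarrow 1\qquad(n\to\infty).
\]
This is the only place where the strict inequality $\rho<1$ (rather than $\rho\le 1$) is used.

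For the second step, write $c_i:=a_i(x)a_{i+1}(x)$, so that $c_i/\varphi(i)\to 1$ and $L_n(x)=\max_{1\le i\le n}c_i$. The lower bound is immediate: since $i=n$ is admissible in the maximum, $L_n(x)\ge c_n$, whence $\liminf_{n\to\infty}L_n(x)/\varphi(n)\ge 1$. For the matching upper bound, fix $\delta>0$ and choose $N$ with $c_i\le(1+\delta)\varphi(i)$ for all $i\ge N$; monotonicity of $\varphi$ then yields $c_i\le(1+\delta)\varphi(n)$ for all $N\le i\le n$, while the finitely many terms $c_1,\ldots,c_{N-1}$ are bounded by a constant depending only on $x$ and $N$, which is eventually dominated by $(1+\delta)\varphi(n)$ because $\varphi(n)\to\infty$. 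Hence $L_n(x)\le(1+\delta)\varphi(n)$ for all large $n$; letting $\delta\to 0$ gives $\limsup_{n\to\infty}L_n(x)/\varphi(n)\le 1$. Combining the two bounds, $L_n(x)/\varphi(n)\to 1$, so $x\in L(\varphi)$ and the inclusion follows.

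I do not expect a genuine obstacle here; the computation is elementary. The two points that warrant a little care are the invocation of Lemma \ref{i2} (1) to convert the growth restriction $\rho<1$ into the ratio asymptotic $\varphi(n+1)/\varphi(n)\to 1$ (which is what makes the geometric-mean factor $\sqrt{\varphi(n+1)/\varphi(n)}$ tend to $1$), and the handling of the initial block of indices in the upper bound for $L_n(x)$, which is harmless precisely because $\varphi$ is increasing and unbounded.
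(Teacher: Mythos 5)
Your proof is correct and follows essentially the same route as the paper's: extract $a_n(x)a_{n+1}(x)\sim\varphi(n)$ from the constraints defining $B$, use Lemma~\ref{i2}(1) applied to $\log\varphi$ to get $\varphi(n+1)/\varphi(n)\to 1$, and then upgrade to $L_n(x)/\varphi(n)\to 1$ using the monotonicity and unboundedness of $\varphi$. Your treatment of the running maximum (split at a threshold $N$, handle the finite initial block by noting $\varphi(n)\to\infty$) is a cleaner phrasing of what the paper does with the explicit bound $L_n(x)\le 4\varphi(N-1)+(1+1/\sqrt{n+1})^2\varphi(n+1)$. The one thing the paper includes that you omit is a preliminary verification that $\varphi(n)/n\to\infty$ (obtained by differentiating $\varphi(x)/x$ and using $\rho>1/2$); this confirms that $t_n=\sqrt{\varphi(n)/n}\to\infty$, a standing hypothesis attached to the definition of $B(\{s_n\},\{t_n\},N)$ in~\eqref{Bst}, and is also what makes the subsequent application of Lemma~\ref{LemmaLR} legitimate. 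Strictly speaking your argument for the inclusion does not use $t_n\to\infty$, so this is a harmless omission for the proposition as stated, but it is worth noting that the choice of $t_n$ has to satisfy that hypothesis for the surrounding dimension computation to go through.
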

\begin{proof}
First, we show that $\varphi(n)/n$ tends to infinity as $n \to\infty$. Consider the function $f(x)=\varphi(x)/x$. Since $\log\varphi$ is regularly increasing with index $1/2<\rho<1$, then by \eqref{def1}, we have
$$f'(x)=\frac{x\varphi'(x)-\varphi(x)}{x^2} >\frac{\varphi(x)\left(\frac{\log\varphi(x)}{2}-1\right)}{x^2}>0,$$
for sufficiently large $x$. Hence, there exists a positive integer $N_1\geq2$ such that $n \mapsto \varphi(n)/n$, where $n\geq N_1$, is increasing and tends to infinity as $n\to\infty$.
For any $x\in B(\{s_n\}, \{t_n\}, 1)$, by the definition of $L_n(x)$, we have  
\begin{equation}\label{Lnx}
\begin{cases}
L_n(x)\ge a_n(x)a_{n+1}(x)
\geq\left(1-1/\sqrt{n}\right)^2\varphi(n),\cr
L_n(x)\leq\max\limits_{1\le k\le n}\{(s_k+t_k)(s_{k+1}+t_{k+1})\}\leq4\varphi(N-1)+\left(1+1/\sqrt{n+1}\right)^2\varphi(n+1).
\end{cases}
\end{equation}
By Lemma \ref{i2} (1), we obtain that 
\begin{equation}\label{phi}
  \lim_{n\to\infty}\frac{\varphi(n+1)}{\varphi(n)}=1.
\end{equation}
It follows from \eqref{Lnx} and \eqref{phi} that
\[
    1\le\liminf_{n\to \infty}\frac{L_n(x)}{\varphi(n)}  \le\limsup_{n\to \infty}\frac{L_n(x)}{\varphi(n)}\leq\limsup_{n\to \infty}\frac{\varphi(n+1)}{\varphi(n)}=1,
\]
  which implies that $x\in L(\varphi)$. Thus, $B(\{s_n\}, \{t_n\}, 1)\subseteq L(\varphi)$.
\end{proof}
Notice that $\log\varphi$ is regularly increasing with index $\rho> 1/2$. By Lemma \ref{i1} (3) and (4), we have
\begin{equation}\label{phibz}
  \lim_{n\to\infty}\frac{\log\varphi(n+1)}{\log\varphi(n)}=1\ \text{and}\ \lim_{n\to\infty}\frac{\log\varphi(n)}{\log n} =\infty.
\end{equation}
 Together with  Lemma \ref{LemmaLR} and \eqref{phibz}, we obtain
 $$\dim_{\mathrm{H}}L(\varphi) \ge\liminf_{n\to\infty}\frac{\frac{1}{2} \left(\sum_{k=1}^{n}\log\varphi(k)-\sum_{k=1}^{n}\log k\right)}{\sum_{k=1}^{n}\log\varphi(k)+ \frac{1}{2}\log\varphi(n+1)+\frac{1}{2}\log(n+1)}=\frac{1}{2}.$$

\subsubsection{Lower bound for the case $\rho=1$.}
If $\rho=1$, the equality \eqref{phi} cannot hold, so we need to provide a new method to obtain the lower bound. To establish a lower bound for $\dim_{\rm{H}}L(\varphi)$ in the case $\rho=1$, we construct a Cantor-type subset of $L(\varphi)$ by using the definition of the regularly increasing function.

Since $\log \varphi$ is a regularly increasing function with index 1. By \eqref{def1}, we have
$\varphi(n)=e^{\alpha n+o(n)}$, where $0<\alpha<\infty$ is a constant. For convenience, we set $\varphi(n)=e^{\alpha n}$, which does not affect the conclusion. 
\begin{enumerate}
    \item[\textbf{Step 1:}] Construct a subset $\Upsilon(\alpha, N)$ of $L(\varphi)$. Let $N\in \mathbb{N}$, define
\begin{equation}\label{upsi}
    \Upsilon(\alpha, N)=\left\{x\in (0,1): e^{-\frac{\alpha}{4}}<\frac{a_n(x)}{e^{\frac{\alpha}{2}n}}< e^{-\frac{\alpha}{4}}+\frac{1}{n},~~\forall n\ge N\right\}.
\end{equation}
Let $N_2$ denote the smallest integer $n$ such that $e^{\frac{\alpha}{2}n}/n\ge 2$. When $N\ge N_2$, the set $\Upsilon(\alpha, N)$ is non-empty.
 For any $x\in \Upsilon(\alpha, N)$. Let $n\ge N$ be sufficiently large, we have $a_{n-1}(x)<a_n(x)$ and
     $$e^{\alpha n}<a_n(x)a_{n+1}(x)<e^{\alpha n}+e^{\alpha n+\alpha/4}\left(\frac{1}{n}+\frac{1}{n+1}\right)+\frac{e^{\alpha n+\alpha/2}}{n(n+1)}.$$
     Thus, for sufficiently large $n\ge N$, we have
     $$L_n(x)=\max_{1\le i\le n}\{a_i(x)a_{i+1}(x)\}=a_n(x)a_{n+1}(x).$$
     It follows that 
     $$\lim_{n\to\infty}\frac{L_n(x)}{\varphi(n)}=1.$$
     Hence,
     $$\Upsilon(\alpha, N)\subseteq L(\varphi).$$
     \item[\textbf{Step 2:}] Construct a measure $\mu$ supported on $\Upsilon(\alpha, N)$. Without loss of generality, we assume $N_2=1$ and set $N=1$.  Then, the number of basic intervals $I_n(a_1,\ldots, a_n)$, which have nonempty intersection with $\Upsilon(\alpha, 1)$, is approximately
     \begin{equation}\label{bjc}
    \prod_{j=1}^n\Big(\frac{1}{j}e^{\frac{\alpha}{2}j}\Big)=\frac{1}{n!}e^{\frac{\alpha}{2}\sum_{j=1}^nj}.
    \end{equation}
    By \eqref{lengthIn}, the length of such interval is
    \begin{equation}\label{bjc2}
    2^{-(2n+1)}\prod_{j=1}^{n}\Big(e^{-\frac{\alpha}{4}}+\frac{1}{j}\Big)^{-2}e^{-\alpha\sum_{j=1}^{n}j}\leq|I_n(a_1,\ldots,a_n)|\leq e^{\frac{n\alpha}{2}-\alpha\sum_{j=1}^{n}j}.
    \end{equation}
    Now, we construct a probability measure $\mu$ uniformly distributed on $\Upsilon(\alpha, 1)$. If  $a_1,\ldots, a_{n-1}$ are given, then the probability of $a_n$ taking any integer value between $e^{\frac{\alpha}{2}n-\frac{\alpha}{4}}$ and $(e^{-\alpha/4}+\frac{1}{n})e^{\frac{\alpha}{2}n}$ is same.  Let $\varepsilon$ be sufficiently small, up to a factor $e^{\varepsilon\sum_{j=1}^{n}j}$, by \eqref{bjc} and \eqref{bjc2}, we have the following relations:
    \begin{enumerate}
       \item[(1)] For the basic intervals $I_n(a_1,\ldots,a_n)$, the length and the measure are given by
       $$|I_n(a_1,\ldots, a_n)|\approx e^{-\alpha\sum_{j=1}^nj}~~\text{and}~~\mu(I_n(a_1,\ldots, a_n))\approx e^{-\frac{\alpha}{2}\sum_{j=1}^n j}.$$
        \item[(2)] All $I_n(a_1,\ldots, a_n)$ contained within a single $I_n(a_1,\ldots, a_{n-1})$ form an interval of length 
        $$e^{\frac{\alpha}{2}n-\alpha\sum_{j=1}^nj}.$$ 
   \end{enumerate}
 
   \item[\textbf{Step 3:}] Estimate the lower bound of $L(\varphi)$.
   For any $x\in \Upsilon(\alpha, 1)$ and $r\in\left(e^{-\alpha\sum_{j=1}^nj}, e^{-\alpha\sum_{j=1}^{n-1}j}\right)$, the measure of the ball $B(x,r)$ is
        \begin{equation*}
            \mu(B(x,r))\approx
            \begin{cases}
                r\cdot e^{-\frac{\alpha}{2}\sum_{j=1}^n j}, &\text{if}~ r< e^{\frac{\alpha}{2}n-\alpha\sum_{j=1}^nj},\\
                e^{-\frac{\alpha}{2}\sum_{j=1}^{n-1} j}, &\text{if}~ r\geq e^{\frac{\alpha}{2}n-\alpha\sum_{j=1}^nj}.
            \end{cases}
        \end{equation*}
        Then, we obtain 
        $$\liminf_{r\to 0}\frac{\log \mu(B(x,r))}{\log r} \ge \liminf_{n\to \infty}\frac{-\frac{\alpha}{2}\sum_{j=1}^{n-1} j}{\frac{\alpha}{2}n-\alpha\sum_{j=1}^n j}=\frac{1}{2}.$$
    Hence, the lower local dimension of $\mu$ equals 1/2 at each point of $\Upsilon(\alpha,1)$, which implies that
    $$\dim_{\mathrm{H}}\Upsilon(\alpha,1)\ge \frac{1}{2}.$$
     By the Frostman Lemma (\cite[Principle 4.2]{falconer2004fractal}), we have
$$\dim_{\mathrm{H}} L(\varphi)\ge \dim_{\mathrm{H}}\Upsilon(\alpha, 1)=\frac{1}{2}.$$
\end{enumerate}

\subsubsection{Upper Bound for the case $1/2<\rho\le 1$.}
To obtain the upper bound of $\dim_{\rm_{H}}L(\varphi)$,  we employ a method of selecting an appropriate positive real number $s$ such that $\mathcal{H}^{s}(L(\varphi))<\infty$. Before proceeding with the proof, we present several key lemmas by choosing this positive real number $s$.
Let
$$\Lambda(m,n):=\big\{(i_1,\ldots,i_n)\in \{1,\ldots,m\}^n:i_1+\cdots+i_n=m\big\},$$
and $\xi(\cdot)$ be the Riemann zeta function.

\begin{lemma}(\cite[Lemma 2.1]{LiaoRams2})\label{LiaoRams2}
  For any $s\in(1/2,1)$ and $m\geq n\geq1$, we have
  $$\sum_{(i_1,\ldots,i_n)\in \Lambda(m,n)}\prod_{k=1}^ni_k^{-2s}\le \left(\frac{9}{2}\left(2+\xi(2s)\right)\right)^nm^{-2s}.$$
\end{lemma}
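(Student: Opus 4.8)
The plan is to recast the sum as an iterated convolution and reduce everything to a single estimate, which is then proved by a three‑way splitting of the summation range. For integers $i\ge 1$ put $a(i):=i^{-2s}$, so that $\sum_{(i_1,\dots,i_n)\in\Lambda(m,n)}\prod_{k=1}^n i_k^{-2s}$ is exactly the $n$‑fold additive convolution $a^{*n}(m)$. Since convolution against the nonnegative kernel $a$ is monotone, the claim will follow at once from the single ``self‑convolution'' inequality
\begin{equation}
  a^{*2}(m)=\sum_{j=1}^{m-1} j^{-2s}(m-j)^{-2s}\;\le\;C\,m^{-2s}\qquad(m\ge 2),\tag{$\star$}
\end{equation}
with $C:=\tfrac92\bigl(2+\xi(2s)\bigr)$: an easy induction on $n$ --- using $a^{*(n+1)}=a^{*n}*a$, the estimate $(\star)$ in the form $a*a\le Ca$, and monotonicity of convolution, and anchored at $n=1$ where $a^{*1}=a\le C^{0}a$ --- gives $a^{*n}\le C^{\,n-1}a\le C^{\,n}a$ pointwise, which is the assertion. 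Thus the whole problem reduces to $(\star)$.

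To prove $(\star)$ I would split $\{1,\dots,m-1\}$ into the two ``outer thirds'' $\{j\le m/3\}$ and $\{j\ge 2m/3\}$ and the ``middle third'' $\{m/3<j<2m/3\}$. On the first outer block one has $m-j\ge 2m/3$, so $(m-j)^{-2s}\le(3/2)^{2s}m^{-2s}$, and summing $j^{-2s}$ over all $j\ge1$ bounds this block by $(3/2)^{2s}\xi(2s)\,m^{-2s}$; by the symmetry $j\leftrightarrow m-j$ the other outer block satisfies the same bound, so the two outer blocks together contribute at most $2(3/2)^{2s}\xi(2s)\,m^{-2s}\le\tfrac92\xi(2s)\,m^{-2s}$, using $2(3/2)^{2s}\le 2(3/2)^2=9/2$ for $s\le 1$.

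For the middle block, $j(m-j)$ is concave in $j$ and equals $\tfrac29 m^2$ at both endpoints $j=m/3$ and $j=2m/3$, hence $j(m-j)\ge\tfrac29 m^2$ there and $j^{-2s}(m-j)^{-2s}\le(9/2)^{2s}m^{-4s}$. Since the middle block contains at most $\tfrac m3+1$ integers, its contribution is at most $\tfrac13(9/2)^{2s}\bigl[(m+3)m^{-2s}\bigr]m^{-2s}$; and because $s>\tfrac12$ the factor $(m+3)m^{-2s}$ is decreasing in $m$ on $[2,\infty)$, so it is $\le 5\cdot 2^{-2s}$, giving a middle‑block bound $\tfrac53(9/4)^{2s}m^{-2s}\le\tfrac53\cdot\tfrac{81}{16}\,m^{-2s}<9\,m^{-2s}$. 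Adding the three contributions gives $\sum_{j=1}^{m-1}j^{-2s}(m-j)^{-2s}\le\bigl(\tfrac92\xi(2s)+9\bigr)m^{-2s}=\tfrac92\bigl(2+\xi(2s)\bigr)m^{-2s}$, which is $(\star)$.

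The one genuinely delicate point --- and the step I expect to require the most care --- is making the numerical constants fit: a split into halves rather than thirds would inflate the coefficient of $\xi(2s)$ to roughly $8$, which exceeds the permitted $9/2$ exactly in the regime $s\downarrow\tfrac12$, where $\xi(2s)\to\infty$; the three‑way split is what pins that coefficient at $9/2$. One also has to resist bounding the number of middle‑block indices by the naive $m/3$ (which is false when $m\equiv 2\pmod 3$) and use $m/3+1$ instead, then exploit the fast decay $m^{-4s}$ with $s>1/2$ so that the middle block fits inside the additive budget $9\,m^{-2s}=\tfrac92\cdot 2\cdot m^{-2s}$ rather than spilling over.
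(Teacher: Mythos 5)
The paper does not prove this lemma; it cites it directly from Liao and Rams (2016, Lemma 2.1), so there is no in-paper argument to compare against. Your self-contained proof is correct. The convolution reduction is sound: writing $a(i)=i^{-2s}$ and inducting with $a^{*(n+1)}=a^{*n}*a$, monotonicity of convolution against the nonnegative kernel $a$, and the two-term bound $a*a\le Ca$ gives $a^{*n}(m)\le C^{n-1}m^{-2s}\le C^n m^{-2s}$. The three-way split of $\{1,\dots,m-1\}$ then verifies $(\star)$ with the advertised constant: the two outer thirds contribute at most $2(3/2)^{2s}\xi(2s)m^{-2s}\le\frac{9}{2}\xi(2s)m^{-2s}$, while the middle third, by the concavity bound $j(m-j)\ge\frac{2}{9}m^2$ together with the count $\le m/3+1$ and the monotonicity of $(m+3)m^{-2s}$ for $s>1/2$, contributes at most $\frac{5}{3}(9/4)^{2s}m^{-2s}\le\frac{135}{16}m^{-2s}<9m^{-2s}$. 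These add to $\frac{9}{2}(2+\xi(2s))m^{-2s}$ as required. The one slip is in your closing motivational remark: a halves split would not in fact fail as $s\downarrow\frac{1}{2}$, since there $2\cdot 2^{2s}\to 4<\frac{9}{2}$; the obstruction you describe only arises if one insists on bounding $2^{2s}$ uniformly by $4$ before multiplying by $\xi(2s)$. This does not affect the validity of the proof, only the heuristic explanation of why you chose thirds.
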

\begin{lemma}(\cite[Lemma 5.4]{hu2021limit})\label{yyl54}
   For any $\varepsilon>0$ and $n\ge 2$, let
  $$\pi(n)=\#\left\{(a,b)\in\mathbb{N}\times \mathbb{N}:ab=n\right\}.$$
 Then, there exists a constant $c_{\varepsilon}$ depending on $\varepsilon$ such that
  $\pi(n)\le c_{\varepsilon}n^{\varepsilon}$.

\end{lemma}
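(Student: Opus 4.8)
The plan is first to observe that $\pi(n)$ is nothing but the classical divisor function $d(n):=\#\{a\in\mathbb{N}:a\mid n\}$: the map $(a,b)\mapsto a$ is a bijection from $\{(a,b)\in\mathbb{N}\times\mathbb{N}:ab=n\}$ onto the set of divisors of $n$, since once $a$ is chosen, $b=n/a$ is forced. Hence it suffices to prove the well-known estimate $d(n)=O_\varepsilon(n^\varepsilon)$.

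To do this I would exploit the multiplicativity of $d$. Writing the prime factorization $n=p_1^{\alpha_1}\cdots p_r^{\alpha_r}$, one has $d(n)=\prod_{j=1}^r(\alpha_j+1)$, so that
$$\frac{d(n)}{n^\varepsilon}=\prod_{j=1}^r\frac{\alpha_j+1}{p_j^{\varepsilon\alpha_j}}.$$
Now split the product according to the size of $p_j$. If $p_j\ge 2^{1/\varepsilon}$, then $p_j^{\varepsilon\alpha_j}\ge 2^{\alpha_j}\ge\alpha_j+1$, so the corresponding factor is at most $1$. If $p_j<2^{1/\varepsilon}$ — and there are only finitely many such primes, say $K_\varepsilon:=\#\{p\text{ prime}:p<2^{1/\varepsilon}\}$ of them — then, using $p_j\ge 2$, the factor is at most $g(\alpha_j)$, where $g(x):=(x+1)2^{-\varepsilon x}$. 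Since $g$ is continuous on $[0,\infty)$ and $g(x)\to 0$ as $x\to\infty$, it attains a finite maximum $M_\varepsilon<\infty$, independent of $n$ and of the exponents. Multiplying these bounds over all $j$ gives $d(n)/n^\varepsilon\le M_\varepsilon^{K_\varepsilon}=:c_\varepsilon$, which is the asserted bound.

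I do not expect a genuine obstacle here: this is the classical $n^\varepsilon$-bound for the divisor function, and the only point deserving a little care is the clean dichotomy between the ``large'' primes $p\ge 2^{1/\varepsilon}$, for which the factor $n^\varepsilon$ in the denominator alone dominates the numerator $\alpha_j+1$, and the finitely many ``small'' primes, for which one pays only a bounded price $M_\varepsilon$ per prime — crucially uniform in the exponent $\alpha_j$ — which is what keeps the overall constant $c_\varepsilon$ independent of $n$.
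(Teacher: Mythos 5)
Your proof is correct and is the classical argument for the estimate $d(n)=O_\varepsilon(n^\varepsilon)$ on the divisor function. The paper itself does not supply a proof of this lemma — it is quoted from Hu, Hussain and Yu's paper by citation — so there is no in-paper argument to compare against. Your identification $\pi(n)=d(n)$ via $(a,b)\mapsto a$ is the right first step, and the subsequent dichotomy between primes $p\ge 2^{1/\varepsilon}$ (where $p^{\varepsilon\alpha}\ge 2^\alpha\ge\alpha+1$ kills the factor outright) and the finitely many primes $p<2^{1/\varepsilon}$ (where the continuous function $g(x)=(x+1)2^{-\varepsilon x}$ is bounded uniformly in the exponent) is exactly the standard mechanism that makes the constant $c_\varepsilon$ depend only on $\varepsilon$. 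No gap.
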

Let $0<\delta<\rho-1/2$ be fixed. For any $k\ge 1$, define
\begin{equation}\label{eq311}
    \gamma:=\rho-\delta~~\text{and} ~~ n_k:=\lfloor k^{1/\gamma}\rfloor.
\end{equation}
 
\begin{proposition}\label{Lemma3.4}
  Let $n_k$ be defined as in \eqref{eq311}. We have
  $$\liminf_{k\to\infty}\frac{\varphi(n_k)}{\varphi(n_{k-1})}>1.$$
 \end{proposition}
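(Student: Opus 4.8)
The plan is to prove the stronger statement $\log\varphi(n_k)-\log\varphi(n_{k-1})\to\infty$ as $k\to\infty$; exponentiating then gives $\varphi(n_k)/\varphi(n_{k-1})\to\infty$, which is much more than the claimed $\liminf_{k\to\infty}\varphi(n_k)/\varphi(n_{k-1})>1$. Write $f:=\log\varphi$; by hypothesis $f\in C^1$, $f>0$, $f$ is increasing with $f(x)\to\infty$, and $\lim_{x\to\infty}xf'(x)/f(x)=\rho$ by \eqref{def1}. Recall we are in the range $\rho\in(1/2,1]$, and $\delta,\gamma,n_k$ are as in \eqref{eq311} with $0<\delta<\rho-1/2$, so $\gamma=\rho-\delta\in(1/2,1)$ and in particular $1/\gamma>1$.

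First I would record three elementary facts, all valid for $k$ large. \textbf{(i)} From \eqref{def1} and $\rho>1/2>0$: $f'(x)\ge\frac{\rho}{2x}f(x)$ for $x$ large. \textbf{(ii)} From Lemma \ref{i1}(4): fixing any $\eta\in(0,\delta)$, one has $f(x)\ge x^{\rho-\eta}$ for $x$ large. \textbf{(iii)} Since $k^{1/\gamma}-1\le n_k\le k^{1/\gamma}$, we get $n_{k-1}\ge\frac12(k-1)^{1/\gamma}$ for $k$ large; and the mean value theorem applied to $t\mapsto t^{1/\gamma}$ (whose derivative $\frac1\gamma t^{1/\gamma-1}$ is increasing because $1/\gamma-1>0$) gives $k^{1/\gamma}-(k-1)^{1/\gamma}\ge\frac1\gamma(k-1)^{1/\gamma-1}$, whence $n_k-n_{k-1}\ge\frac1\gamma(k-1)^{1/\gamma-1}-1\ge\frac1{2\gamma}(k-1)^{1/\gamma-1}$ for $k$ large (using $(k-1)^{1/\gamma-1}\to\infty$); in particular $n_k-n_{k-1}\to\infty$.

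Then I would apply the mean value theorem to $f$ on the (eventually nondegenerate) interval $[n_{k-1},n_k]$: $f(n_k)-f(n_{k-1})=f'(\xi_k)(n_k-n_{k-1})$ for some $\xi_k\in(n_{k-1},n_k)$. Using \textbf{(i)}, the monotonicity of $f$, and $\xi_k\le n_k\le k^{1/\gamma}$, I bound $f'(\xi_k)\ge\frac{\rho}{2\xi_k}f(\xi_k)\ge\frac{\rho}{2k^{1/\gamma}}f(n_{k-1})$, and by \textbf{(ii)} and \textbf{(iii)}, $f(n_{k-1})\ge\frac12(k-1)^{(\rho-\eta)/\gamma}$. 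Multiplying these estimates and absorbing constants (with $(k-1)^a\asymp k^a$ for fixed $a$) yields, for some $c>0$ and all large $k$,
\[
f(n_k)-f(n_{k-1})\ \ge\ c\,k^{-1/\gamma}\cdot k^{(\rho-\eta)/\gamma}\cdot k^{1/\gamma-1}\ =\ c\,k^{(\rho-\eta-\gamma)/\gamma}\ =\ c\,k^{(\delta-\eta)/\gamma},
\]
where the last equality uses $\rho-\gamma=\delta$. Since $\eta<\delta$, the exponent $(\delta-\eta)/\gamma$ is positive, so $f(n_k)-f(n_{k-1})\to\infty$ and the proof is complete. (An alternative start: by Lemma \ref{i1}(2) the function $x\mapsto f(x^{1/\gamma})$ is regularly increasing with index $\rho/\gamma>1$, so Lemma \ref{i2}(2) gives $f(k^{1/\gamma})-f((k-1)^{1/\gamma})\to\infty$; but one still has to absorb the effect of the floor, so the direct estimate above is cleaner.)

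The step that needs genuine care is \textbf{(ii)}: bounding $f(n_{k-1})$ from below by a mere constant would give only $f(n_k)-f(n_{k-1})\gtrsim k^{-1}\to0$, which is useless. One must use the power growth $f(x)\ge x^{\rho-\eta}$ together with the strict inequality $\delta>0$ --- equivalently $\gamma<\rho$, so that $\rho-\eta>\gamma$ for $\eta$ small --- to make the three exponents $-1/\gamma$, $(\rho-\eta)/\gamma$ and $1/\gamma-1$ sum to something positive. (That $\gamma<1$, used for $1/\gamma-1>0$ in \textbf{(iii)}, follows from $\rho\le1$ and $\delta>0$.) The floor functions are only a minor nuisance, absorbed by the sandwich $k^{1/\gamma}-1\le n_k\le k^{1/\gamma}$ and the fact that $n_k-n_{k-1}$ already grows without bound.
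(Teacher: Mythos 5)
Your proof is correct and follows essentially the same approach as the paper: both reduce the claim to showing $\log\varphi(n_k)-\log\varphi(n_{k-1})\to\infty$, and both combine the regular-variation inequality $(\log\varphi)'(x)\gtrsim \log\varphi(x)/x$ with the lower bound $\log\varphi(x)\ge x^{\rho-\varepsilon}$ from Lemma \ref{i1}(4), then use $\gamma<\rho$ to make the resulting exponent positive. The only difference is presentational: you apply the mean value theorem to $f=\log\varphi$, while the paper writes $\varphi(n_k)-\varphi(n_{k-1})=\int_{n_{k-1}}^{n_k}\varphi'(t)\,dt$ and bounds the integrand by $\varphi(t)\cdot(\rho-\varepsilon)t^{\rho-\varepsilon-1}$.
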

 \begin{proof}
   Since $\varphi$ is a differentiable function, we have
   $$\frac{\varphi(n_k)}{\varphi(n_{k-1})} =\frac{\varphi(n_{k-1})+\int_{n_{k-1}}^{n_k}\varphi'(t)dt}{\varphi(n_{k-1})}.
   $$
  Therefore, it suffices to prove
  $$\liminf_{k\to\infty} \frac{\int_{n_{k-1}}^{n_k}\varphi'(t)dt}{\varphi(n_{k-1})}>0.$$ 
  Note that $\log\varphi$ is an increasing function with index $\rho$, satisfying $1/2<\rho\le 1$. Let $0<\varepsilon<\delta$. By \eqref{def1} and Lemma \ref{i1} (4), we have
  $$\frac{\varphi'(x)}{\varphi(x)}\ge (\rho-\varepsilon)\frac{\log\varphi(x)}{x}\ \ \text{and}\ \ \log\varphi(x)\ge x^{\rho-\varepsilon},$$
  for sufficiently large $x$. Then, for sufficiently large $k$, it follows that 
  \begin{align*}
    \frac{\int_{n_{k-1}}^{n_k}\varphi'(t)dt}{\varphi(n_{k-1})}\ge \int_{n_{k-1}}^{n_k}\frac{\varphi'(t)}{\varphi(t)}dt \ge (\rho-\varepsilon)\int_{n_{k-1}}^{n_k}\frac{\log\varphi(t)}{t}dt\ge (\rho-\varepsilon)\int_{n_{k-1}}^{n_k}\frac{t^{\rho-\varepsilon}}{t}dt= \left(n_k^{\rho-\varepsilon} -n_{k-1}^{\rho-\varepsilon}\right),
  \end{align*}
which, together with $\rho-\varepsilon>\rho-\delta=\gamma$, implies that 
$$\liminf_{k\to\infty} \frac{\int_{n_{k-1}}^{n_k}\varphi'(t)dt}{\varphi(n_{k-1})}>0.$$
The proof is complete.  
 \end{proof}

The next proposition shows the position of the maximal product of consecutive partial quotients among the first $n_k$ terms in the continued fraction expansion of $x$.

\begin{proposition}\label{TandS}
  Let $x\in L(\varphi)$ be fixed. Then, for sufficiently large $k\in \mathbb{N}$, there exists $j_k\ge 1$ such that $n_{k-1}<j_k\leq n_k$ and $L_{n_k}(x)=a_{j_k}(x)a_{j_k+1}(x)$.
\end{proposition}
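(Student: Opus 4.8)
\textbf{Proof proposal for Proposition \ref{TandS}.}

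The plan is to show that for $x \in L(\varphi)$, the maximal product $L_{n_k}(x)$ among the first $n_k$ terms cannot be achieved too early, that is, before index $n_{k-1}$; rather it must occur in the ``new'' block of indices $(n_{k-1}, n_k]$. The driving force is the monotonicity statement of Proposition \ref{Lemma3.4}, which gives a genuine multiplicative gap: there exist $c>1$ and $K_0$ such that $\varphi(n_k)/\varphi(n_{k-1}) \ge c$ for all $k \ge K_0$. Combined with the definition of $L(\varphi)$, this means the value $L_{n_k}(x)$ is, for large $k$, roughly $\varphi(n_k)$, which is strictly larger than $\varphi(n_{k-1}) \approx L_{n_{k-1}}(x)$. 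So the maximum ``grows'' when passing from $n_{k-1}$ to $n_k$, and hence the maximizing pair must involve a partial quotient with index exceeding $n_{k-1}$.

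Concretely, I would argue as follows. Fix $x \in L(\varphi)$. Since $\lim_{n\to\infty} L_n(x)/\varphi(n) = 1$, for any $\varepsilon \in (0, c-1)$ (with $c$ from Proposition \ref{Lemma3.4}) there is $N$ such that $(1-\varepsilon)\varphi(n) \le L_n(x) \le (1+\varepsilon)\varphi(n)$ for all $n \ge N$. Choose $k$ large enough that $n_{k-1} \ge N$ and $\varphi(n_k)/\varphi(n_{k-1}) \ge c$. Then
\[
L_{n_k}(x) \ge (1-\varepsilon)\varphi(n_k) \ge (1-\varepsilon)c\,\varphi(n_{k-1}) > (1+\varepsilon)\varphi(n_{k-1}) \ge L_{n_{k-1}}(x),
\]
where the strict middle inequality holds because $(1-\varepsilon)c > 1+\varepsilon$ for $\varepsilon$ small (this is exactly the condition $\varepsilon < (c-1)/(c+1)$, so I would fix $\varepsilon$ that small at the outset). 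Now let $j_k$ be the smallest index in $\{1, \ldots, n_k\}$ realizing the maximum, i.e.\ $L_{n_k}(x) = a_{j_k}(x)a_{j_k+1}(x)$. If we had $j_k \le n_{k-1}$, then $a_{j_k}(x)a_{j_k+1}(x)$ would already be among the products considered in forming $L_{n_{k-1}}(x)$ — note $j_k + 1 \le n_{k-1} + 1$, so the pair $(a_{j_k}, a_{j_k+1})$ is indeed one of the pairs $(a_i, a_{i+1})$ with $1 \le i \le n_{k-1}$ — giving $L_{n_{k-1}}(x) \ge a_{j_k}(x)a_{j_k+1}(x) = L_{n_k}(x)$, contradicting the strict inequality above. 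Hence $j_k > n_{k-1}$, and since $j_k \le n_k$ by construction, we get $n_{k-1} < j_k \le n_k$ as claimed.

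I do not anticipate a serious obstacle here; the only point requiring a little care is the bookkeeping on indices, namely checking that a maximizing pair with $j_k \le n_{k-1}$ really is counted in $L_{n_{k-1}}(x)$ (which it is, since $L_m(x) = \max_{1\le i \le m}\{a_i(x)a_{i+1}(x)\}$ ranges over $i$ up to $m$, and $j_k \le n_{k-1}$ forces the pair into that range), and fixing $\varepsilon$ small enough \emph{before} invoking the convergence so that the chain of inequalities is genuinely strict. Everything else is an immediate consequence of Proposition \ref{Lemma3.4} and the definition of $L(\varphi)$.
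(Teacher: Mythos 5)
Your proposal is correct and takes essentially the same approach as the paper: both deduce from Proposition \ref{Lemma3.4} together with $L_n(x)/\varphi(n)\to 1$ that $L_{n_k}(x)>L_{n_{k-1}}(x)$ for all large $k$, and then conclude that the maximizing index cannot lie in $\{1,\ldots,n_{k-1}\}$. The only difference is stylistic — you argue directly with a fixed $\varepsilon<(c-1)/(c+1)$, whereas the paper proceeds by contradiction via a subsequence and a $\liminf$ computation.
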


\begin{proof}
  Let $x\in L(\varphi)$ be fixed. Suppose that there exist infinitely many integers $k_i$ and $j_{k_i}$ with $k_i>k_{i-1}$ such that $j_{k_i}< n_{k_i-1}$ and $L_{n_{k_i}}(x)=a_{j_{k_i}}(x)a_{j_{k_i}+1}(x)$. Then
  $$L_{n_{k_i-1}}(x)=L_{n_{k_i}}(x)=a_{j_{k_i}}(x)a_{j_{k_i}+1}(x).$$
Since $L_n(x)/\varphi(n)\to 1$ as $n\to\infty$, we deduce from Proposition \ref{Lemma3.4} that
$$1=\liminf_{i\to\infty}\frac{L_{n_{k_i-1}}(x)}{\varphi(n_{k_i-1})}
=\liminf_{i\to\infty}\frac{L_{n_{k_i}}(x)}{\varphi(n_{k_i})} \cdot\frac{\varphi(n_{k_i})}{\varphi(n_{k_i-1})} =\liminf_{i\to\infty}\frac{\varphi(n_{k_i})}{\varphi(n_{k_i-1})}>1,
$$
which is a contradiction. Thus, the proof is complete.
\end{proof}

In the following, we construct a cover of the set $L(\varphi)$. Let $s\in(1/2,1)$ be arbitrary. Then for any $x\in L(\varphi)$ and $0<\varepsilon<2s-1$, we have 
$$(1-\varepsilon)\varphi(n)\le L_n(x)\le (1+\varepsilon)\varphi(n),$$
for sufficiently large $n$. Recall that $S_n(x)=\sum_{i=1}^{n}a_i(x)a_{i+1}(x)$.  
By Proposition \ref{TandS}, we obtain that
\begin{equation}\label{eq3.20'}
    (1-\varepsilon)\varphi(n_k)\le L_{n_k}(x)\le S_{n_k}(x)-S_{n_{k-1}}(x)\le S_{n_k}(x)\le n_k L_{n_k}(x)\le (1+\varepsilon)n_k\varphi(n_k),
\end{equation}
for sufficiently large $k$. From Lemma \ref{i1} (4),
We deduce that for sufficiently large $k$, 
\begin{equation}\label{eq3.21'}
    \log\varphi(n_{k})=\log\varphi(\lfloor k^{1/\gamma}\rfloor)\ge \log\varphi(k^{1/\gamma}-1)\ge \log\varphi(k^{1/\gamma}/2)>2k,
\end{equation}
where the last inequality follows from the fact that the function $x\mapsto \log\varphi(x^{1/\gamma}/2)$ is regularly increasing with index $\rho/\gamma>1$.
For sufficiently large $k$, we also have
\begin{equation}\label{eq3.22'}
    n_{k}-n_{k-1} = \lfloor k^{1/\gamma}\rfloor-\lfloor(k-1)^{1/\gamma}\rfloor \le k^{1/\gamma}-(k-1)^{1/\gamma}+1 \le \gamma^{-1}\cdot k^{1/\gamma-1}+1 \le \frac{2}{\gamma}k^{1/\gamma-1}.
\end{equation}
By the choice of $\delta$, we have $1/\gamma-1<1$. Let $K\ge 1$ be an integer such that \eqref{eq3.20'}, \eqref{eq3.21'} and \eqref{eq3.22'} hold for all $k\ge K$. 

For any $k\ge K$, set
$$M_k=\big\{i\in \mathbb{N}:(1-\varepsilon)\varphi(n_k)\le i\le (1+\varepsilon)n_k\varphi(n_k)\big\}.$$
For any $K_1\ge K$ and $k\ge K_1$, define
$$J(\varphi,k, K_1)= \Big\{I_{n_k+1}(a_1,\ldots,a_{n_k+1}): \sum_{j=n_{\ell-1}+1}^{n_{\ell}}a_ja_{j+1}=m_{\ell} ~\text{with}~m_{\ell}\in M_{\ell},~K_1\le \ell \le k\Big\},$$
and
\begin{equation}\label{jk2}
J(\varphi, K_1)=\bigcap_{k=K_1}^{\infty}J(\varphi,k, K_1).
\end{equation}
It follows that 
$$L(\varphi)\subseteq\bigcup_{K_1=K}^{\infty}J(\varphi,K_1).$$

Now, we estimate the upper bound for the Hausdorff dimension of $J(\varphi, K)$. For any other $K_1>K$, we apply the same method to obtain the upper bound for $J(\varphi, K_1)$. For $k\ge K$, the cylinders from $J(\varphi,k, K)$ forms a cover of $J(\varphi, K)$. For any $\ell\ge K $, denote
$$A_{\ell}=\Big\{(a_{n_{\ell-1}+1}, \ldots, a_{n_{\ell}+1})\in \mathbb{N}^{n_{\ell}-n_{\ell-1}+1}:\sum_{j=n_{\ell-1}+1}^{n_{\ell}}a_ja_{j+1}=m_{\ell} ~\text{with}~m_{\ell}\in M_{\ell}\Big\}.$$
Then we have
\begin{align}\label{jk3}
  \nonumber\sum_{I_{n_k+1}\subseteq J(\varphi,k,K)}|I_{n_k+1}|^s &
   \le\sum_{I_{n_k+1}\subseteq J(\varphi,k,K)}\prod_{\ell=K}^{k} (a_{n_{\ell-1}+1}a_{n_{\ell-1}+2}\ldots a_{n_{\ell}}a_{n_{\ell}+1})^{-2s}\\
  \nonumber & \le  \prod_{\ell=K}^{k}\sum_{(a_{n_{l-1}+1}, \ldots, a_{n_{\ell}}a_{n_{\ell}+1})\in A_{\ell}} (a_{n_{\ell-1}+1}a_{n_{\ell-1}+2}\ldots a_{n_{\ell}}a_{n_{\ell}+1})^{-2s}\\
   &:= \prod_{\ell=K}^{k} \Gamma_{\ell}(s).
\end{align}
Next, we estimate the upper bound of $\Gamma_{\ell}(s)$. We divide the integers $n_{\ell-1}+1,\ldots,n_{\ell}$ into two parts:
$$\Delta_{\ell,0}:=\left\{n_{\ell-1}+2k: k\in \mathbb{Z},~ 1\le k\le \frac{n_{\ell}-n_{\ell-1}}{2}\right\},$$
and 
$$\Delta_{\ell,1}:=\left\{n_{\ell-1}+2k+1: k\in \mathbb{Z},~ 0\le k\le \frac{n_{\ell}-n_{\ell-1}-1}{2}\right\}.$$
If $(a_{n_{\ell-1}+1}, \ldots, a_{n_{\ell}+1})\in A_{\ell}$, then either
$$\frac{1-\varepsilon}{2}\varphi(n_{\ell})\le\sum_{j\in \Delta_{\ell,0}}a_ja_{j+1}\le (1+\varepsilon)n_{\ell}\varphi(n_{\ell}),$$
or
\begin{equation}\label{36}
  \frac{1-\varepsilon}{2}\varphi(n_{\ell})\le\sum_{j\in \Delta_{\ell,1}}a_ja_{j+1}\le (1+\varepsilon)n_{\ell}\varphi(n_{\ell}).
\end{equation} 
Consider the case where $n_{\ell-1}$ and $n_{\ell}$ are even and $j\in \Delta_{\ell,1}$. The proof of other cases is similar. In this case, we have
\begin{equation}\label{37}
  \#\Delta_{\ell,1}=\frac{n_{\ell}-n_{\ell-1}}{2}.
\end{equation}
Let $b_j=a_ja_{j+1}$. We have
\begin{equation}\label{eq4.1}
    \prod_{j\in \Delta_{\ell,1}}b_j=a_{n_{\ell-1}+1}a_{n_{\ell-1}+2}\ldots a_{n_{\ell}}.
\end{equation}
From (\ref{36}), we deduce that
\begin{equation}\label{38}
  \frac{1-\varepsilon}{2}\varphi(n_{\ell})\le\sum_{j\in \Delta_{\ell,1}}b_j\le (1+\varepsilon)n_{\ell}\varphi(n_{\ell}).
\end{equation}
Set
$\pi(b_j)=\#\left\{(x,y)\in \mathbb{N}^2:xy=b_j\right\}.$
By Lemma \ref{yyl54}, 
\begin{equation}\label{eq4.3}
  \pi(b_j)\le c_{\varepsilon}b_j^{\varepsilon}.
\end{equation}
Define
$$D_{\ell}=\left\{i\in \mathbb{N}:\frac{1-\varepsilon}{2}\varphi(n_k)\le i\le (1+\varepsilon)n_k\varphi(n_k)\right\}\ \ 
\text{and}\ \ 
\Xi(\ell, m_{\ell})=\left\{(b_j)_{j\in \Delta_{\ell,1}}: \sum_{j\in \Delta_{\ell,1}}b_j=m_{\ell}\right\}.$$
Then by \eqref{37}, \eqref{eq4.1}, \eqref{38} and \eqref{eq4.3}, we have
\begin{align}\label{eq4.4}
    \Gamma_{\ell}(s)&\nonumber = \sum_{(a_{n_{l-1}+1}, \ldots, a_{n_{\ell}+1})\in A_{\ell}} (a_{n_{\ell-1}+1}a_{n_{\ell-1}+2}\ldots a_{n_{\ell}})^{-2s}\\
    &\nonumber \le \sum_{m_{\ell}\in D_{\ell}}\sum_{(b_j)\in \Xi(\ell, m_{\ell})} \prod_{j\in \Delta_{\ell,1}}\pi(b_j)b_j^{-2s}\\
    & \le c_{\varepsilon}^{\frac{n_{\ell}-n_{\ell-1}}{2}}\sum_{m_{\ell}\in D_{\ell}}\sum_{(b_j)\in \Xi(\ell, m_{\ell})}\prod_{j\in \Delta_{\ell,1}}b_j^{-2s+\varepsilon}.
\end{align}
 By applying Lemma \ref{LiaoRams2} to (\ref{eq4.4}), we obtain that
\begin{align}\label{eq4.6}
  \Gamma_{\ell}(s) 
   &\nonumber \le c_{\varepsilon}^{\frac{n_{\ell}-n_{\ell-1}}{2}}\sum_{m_{\ell}\in D_{\ell}}\left(\frac{9}{2}\left(2+\xi(2s-\varepsilon)\right)\right)^{\frac{n_{\ell}-n_{\ell-1}}{2}}m_{\ell}^{-2s+\varepsilon}\\
   &\nonumber \le c_{\varepsilon}^{\frac{n_{\ell}-n_{\ell-1}}{2}}\left(\frac{9}{2}(2+\xi(2s-\varepsilon))\right)^{\frac{n_{\ell}-n_{\ell-1}}{2}} \left(\frac{1-\varepsilon}{2}\varphi(n_{\ell})\right)^{-2s+\varepsilon}(1+\varepsilon)n_{\ell}\varphi(n_{\ell})
 \\
   & =Ce^{(1+\varepsilon-2s)\log\varphi(n_{\ell})+\log n_{\ell}+\frac{n_{\ell}-n_{\ell-1}}{2}c(s)},
\end{align}
where \[C=\big(\frac{1-\epsilon}{2}\big)^{-2s+\epsilon}(1+\epsilon)\ \ \text{and}\ \ c(s)=\log \left(\frac{9}{2}(2+\xi(2s-\varepsilon))c_{\epsilon}\right)\]
are independent of $\ell$.
By \eqref{eq3.21'} and \eqref{eq3.22'}, 
there exists $\ell_0(s)$ such that, when $\ell>\ell_0(s)$, we have
$$(1+\varepsilon-2s)\log\varphi(n_{\ell})+\log n_{\ell}+\frac{n_{\ell}-n_{\ell-1}}{2}c(s)<(1+\varepsilon-2s)\ell.$$
Hence,
\begin{equation}\label{eq4.7}
  Ce^{(1+\varepsilon-2s)\log\varphi(n_{\ell})+\log n_{\ell}+\frac{n_{\ell}-n_{\ell-1}}{2}c(s)}
<Ce^{(1+\varepsilon-2s)\ell}.
\end{equation}
Thus,  we deduce from \eqref{jk2}, \eqref{jk3}, \eqref{eq4.6} and \eqref{eq4.7} that 
\begin{align*}
  \mathcal{H}^s\left(J(\varphi,K)\right) & \le \liminf_{k\to\infty}\sum_{I_{n_k+1}\subseteq J(\varphi,k,K)}|I_{n_k+1}|^s  \le\liminf_{k\to\infty} \prod_{\ell=K}^{k} \Gamma_{\ell}(s)  
    \leq\liminf_{k\to\infty} \prod_{\ell=K}^{k} Ce^{(1+\varepsilon-2s)\ell}=0,
\end{align*}
which implies that $\dim_{\mathrm{H}}J(\varphi,K)\le 1/2$. Since $s\in(1/2,1)$ is arbitrary, we have $$\dim_{\mathrm{H}}L(\varphi)\le 1/2.$$
\subsection{\textbf{The case $1<\rho\leq\infty$}.}

In this section, we take $\phi(n):=\log\varphi(n)$. From Lemma \ref{i2} (2), we deduce that
\begin{equation}\label{eq5.2}
  \lim_{n\to\infty}\left(\phi(n+1)-\phi(n)\right)=\infty ~~\text{and}~~
\lim_{n\to\infty}\frac{\varphi(n+1)}{\varphi(n)}=\infty.
\end{equation}

\subsubsection{Lower bound}
To estimate the lower bound for the Hausdorff dimension of $L(\varphi)$, we will construct a Cantor-type subset $E=\bigcap\limits_{n\geq0}E_n$ 
contained in $L(\varphi)$. The Hausdorff dimension of $E$ will be computed in four steps by using the Lemma \ref{ex}.
\begin{enumerate}
    \item[\textbf{Step 1:}]Construct a Cantor-type subset of $L(\varphi)$. 
 Let $\{d_n\}$ be a sequence of positive real numbers, defined by 
\begin{equation}\label{d1}
 d_1=1,\ d_2=\varphi(1)~~ \text{and}~~  d_nd_{n+1}=\varphi(n)-\varphi(n-1), ~\forall\ n\geq2.
\end{equation}
By \eqref{eq5.2}, 
there exists an even number $N_1$ such that for $n\ge N_1$,
we have  $d_n\ge 2$, $\frac{d_n}{\phi(n-1)}\ge 3$, and
\begin{align*}
  \frac{d_{n+1}}{d_{n-1}} & =\frac{d_nd_{n+1}}{d_{n-1}d_n} 
    =\frac{e^{\phi(n)}-e^{\phi(n-1)}}{e^{\phi(n-1)}-e^{\phi(n-2)}}\\
   & =\frac{e^{\phi(n)}\left(1-e^{\phi(n-1)-\phi(n)}\right)}{e^{\phi(n-1)}\left(1-e^{\phi(n-2)-\phi(n-1)}\right)}
   = e^{\phi(n)-\phi(n-1)+o(1)},
\end{align*}
and
\begin{equation}\label{eq5.1}
  d_{n}d_{n+1}\ge \left(1+\frac{1}{\phi(n-2)}\right) \left(1+\frac{1}{\phi(n-1)}\right)d_{n-1}d_{n}.
\end{equation}
In the following, we claim that for any $n\ge N_1$,
\begin{equation}\label{d2}
  d_nd_{n+1}=e^{\phi(n)+o(n)}.
\end{equation} 
Indeed, if $n\ge N_1$ is even, then
\begin{equation*}
d_n= \frac{d_n}{d_{n-2}}\cdot\frac{d_{n-2}}{d_{n-4}}\cdots\frac{d_{N_1+4}}{d_{N_1+2}}\cdot\frac{d_{N_1+2}}{d_{N_1}} = e^{\phi(n-1)-\phi(n-2)+\cdots+\phi(N_1+3)-\phi(N_1+2)+\phi(N_1+1)-\phi(N_1)+o(n)}.
\end{equation*}
If $n\ge N_1$ is odd, then
\begin{equation*}
  d_n= \frac{d_n}{d_{n-2}}\cdot\frac{d_{n-2}}{d_{n-4}}\cdots\frac{d_{N_1+5}}{d_{N_1+3}}\cdot\frac{d_{N_1+3}}{d_{N_1+1}} = e^{\phi(n-1)-\phi(n-2)+\phi(n-3)-\phi(n-4)+\ldots+\phi(N_1+2)-\phi(N_1+1)+o(n)}.
\end{equation*}
Now, we use the sequence $\{d_n\}$ and the even number $N_1$ to construct a Cantor-type subset of $L(\varphi)$. Let
$$E=\left\{x\in(0,1): a_n(x)=1~\text{for}~1\leq n\le N_1,~d_n\le a_n(x)\le \left(1+\frac{1}{\phi(n-1)}\right)d_n~\text{for}~n> N_1\right\}.$$
 By \eqref{d2} and the definition of $L_n(x)$, we conclude that 
\[E\subseteq L(\varphi).\]

\item[\textbf{Step 2:}] Represent the subset $E$. For any $n\ge N_1$ and any positive integers $a_1,\ldots,a_n$, we define
$$J_{n}(a_1,\ldots,a_n):=\bigcup_{a_{n+1}} cl I_{n+1}(a_1,\ldots,a_n,a_{n+1}),$$
where “cl” denotes the closure of a set and the union is taken over all integers $a_{n+1}$ satisfying
$$ d_{n+1}\le a_{n+1}(x)\le \left(1+\frac{1}{\phi(n)}\right)d_{n+1}.$$
Let $a_i=1$ for all $i=1,\ldots, N_1$. For any $n\geq1$, define $E_0=[0,1]$ and
$$ E_n=\bigcup_{a_{N_1+1,\ldots,a_{N_1+n}}}J_{N_1+n}(a_1,\ldots,a_{N_1+n}),$$
 where the union is taken over all integers $a_{N_1+1},\ldots,a_{N_1+n}$ such that
$$ d_{N_1+i}\le a_{N_1+i}(x)\le \left(1+\frac{1}{\phi(N_1+i-1)}\right)d_{N_1+i},$$
for all $1\le i\le n$. Thus, we obtain
$$E=\bigcap_{n=0}^{\infty}E_n.$$

\item[\textbf{Step 3:}] Estimate the gap between $E_n$ and the number of $E_n$ contained in $E_{n-1}$. For any $n\geq1$, based on the structure of the set $E_n$, it is known that each $J_{N_1+n-1}(a_1,\ldots,a_{N_1+n-1})$ in $E_{n-1}$ contains at least $m_n$ intervals 
$J_{N_1+n}(a_1,\ldots,a_{N_1+n})$ of $E_{n}$. The number $m_n$ can be estimated as follows:
\begin{equation}\label{eq5.3}
 m_n=\left\lfloor\left(1+\frac{1}{\phi(N_1+n-1)}\right)d_{N_1+n}\right\rfloor-\lfloor d_{N_1+n}\rfloor\geq\frac{d_{N_1+n}}{\phi(N_1+n-1)}-1.
\end{equation}
Let $J_{N_1+n}(\tau_1,\ldots,\tau_{N_1+n})$ and $J_{N_1+n}(\sigma_1,\ldots,\sigma_{N_1+n})$ be two distinct intervals in $E_n$,  These intervals are separated by the basic interval of order $N_1+n+1$, namely, $I_{N_1+n+1}(\tau_1,\ldots,\tau_{N_1+n},1)$ or $I_{N_1+n+1}(\sigma_1,\ldots,\sigma_{N_1+n},1)$, depending on the relative position between 
 $J_{N_1+n}(\tau_1,\ldots,\tau_{N_1+n})$ and $J_{N_1+n}(\sigma_1,\ldots,\sigma_{N_1+n})$. Then by \eqref{lengthIn}, the gap between $J_{N_1+n}(\tau_1,\ldots,\tau_{N_1+n})$ and $J_{N_1+n}(\sigma_1,\ldots,\allowbreak\sigma_{N_1+n})$ is at least 
\begin{align}\label{eq5.4}
\nonumber |I_{N_1+n+1}(\tau_1,\ldots,\tau_{N_1+n},1)|
&\geq2^{-2(N_1+n+2)}(\tau_{N_1+1}\cdots \tau_{N_1+n})^{-2}\\
&\geq 2^{-2(N_1+n+2)} \prod_{i=1}^{n}\left(\left(1+\frac{1}{\phi(N_1+i-1)}\right)d_{N_1+i}\right)^{-2}:=\theta_n.
\end{align}
Note that $0<\theta_{n+1}<\theta_{n}$ for any $n\geq1$. A similar calculation yields the same inequality for the estimate of $|I_{N_1+n+1}(\sigma_1,\ldots,\sigma_{N_1+n},1)|$.

\item[\textbf{Step 4:}] Estimate the Hausdorff dimension of $E$. We distinguish the following two cases: $n=2k-1$ and $n=2k$ for any $k\geq1$.

  \textbf{Case 1:} If $n=2k-1$ for any $k\geq1$. Then, by \eqref{d2} and \eqref{eq5.3}, we have
      \begin{align*}
        m_1\cdots m_{n-1}&=m_1\cdots m_{2k-2}\\
        &\ge\prod_{i=1}^{2k-2} \left(\frac{d_{N_1+i}}{\phi(N_1+i-1)}-1\right)\geq\prod_{i=1}^{2k-2} \frac{d_{N_1+i}}{2\phi(N_1+i-1)}\\
         & =\prod_{i=1}^{2k-2}\frac{1}{2\phi(N_1+i-1)} (d_{N_1+1}d_{N_1+2})(d_{N_1+3}d_{N_1+4})\ldots (d_{N_1+2k-3}d_{N_1+2k-2})\\
         & = e^{\phi(N_1+1)+\phi(N_1+3)+\ldots+\phi(N_1+2k-3)(1+o(1))}.
      \end{align*}
At the same time, we deduce from \eqref{d2}, \eqref{eq5.3} and \eqref{eq5.4} that 
      \begin{align*}
        \theta_n m_n&=\theta_{2k-1}m_{2k-1}\\&\geq 2^{-2(N_1+2k-1+2)}\frac{d_{N_1+2k-1}}{2\phi(N_1+2k-2)} \prod_{i=1}^{2k-1}\left(1+\frac{1}{\phi(N_1+i-1)}\right)^{-2} \prod_{i=1}^{2k-1}d_{N_1+i}^{-2}\\
        &=e^{\phi(N_1+1)+\phi(N_1+2)\cdots+\phi(N_1+2k-2)(1+o(1))}.
      \end{align*}
Therefore, by Lemma \ref{ex}, we have
      \begin{align}\label{odd1}
      \dim_{\mathrm{H}}E & \ge \liminf_{n\to\infty}\frac{\log(m_1\cdots m_{n-1})}{-\log(\theta_nm_n)}=\liminf_{k\to\infty}\frac{\log(m_1\cdots m_{2k-2})}{-\log(\theta_{2k-1}m_{2k-1})}\nonumber\\
      &\geq\liminf_{k\to\infty} \frac{\sum_{i=1}^{k-1}\phi(N_1+2i-1)}{\sum_{i=1}^{2k-2}\phi(N_1+i)}\geq\liminf_{k\to\infty} \frac{\sum_{i=1}^{k}\phi(2i-1)}{\sum_{i=1}^{2k}\phi(i)}\nonumber\\
      &=\liminf_{k\to\infty} \frac{\sum_{i=1}^{k}\log\varphi(2i-1)}{\sum_{i=1}^{k}\log\varphi(2i-1) +\sum_{i=1}^{k}\log\varphi(2i)}\nonumber\\
      &=\frac{1}{1+\limsup_{k\to\infty}\frac{\sum_{i=1}^{k}\log\varphi(2i)}{\sum_{i=1}^{k}\log\varphi(2i-1)}}.
     \end{align}  

  \textbf{Case 2:} If $n=2k$ for any $k\geq1$. Then, by using the same methods in \textbf{Case 1}, we obtain that
  \[m_1\cdots m_{n-1}=m_1\cdots m_{2k-1}\geq e^{(\phi(N_1+2)+\phi(N_1+4)+\ldots+\phi(N_1+2k-2))(1+o(1))},\]
  and 
  \[\theta_n m_n=\theta_{2k}m_{2k}\geq e^{(\phi(N_1+1)+\phi(N_1+2)+\cdots+\phi(N_1+2k-1)(1+o(1))}.
\]
Thus, by Lemma \ref{ex}, we have
      \begin{align}\label{even1}
      \dim_{\mathrm{H}}E & \ge \liminf_{n\to\infty}\frac{\log(m_1\cdots m_{n-1})}{-\log(\theta_nm_n)}=\liminf_{k\to\infty}\frac{\log(m_1\cdots m_{2k-1})}{-\log(\theta_{2k}m_{2k})}\nonumber\\
      &\geq\liminf_{k\to\infty} \frac{\sum_{i=1}^{k-1}\phi(N_1+2i)}{\sum_{i=1}^{2k-1}\phi(N_1+i)}\geq\liminf_{k\to\infty} \frac{\sum_{i=1}^{k}\phi(2i)}{\sum_{i=1}^{2k+1}\phi(i)}\nonumber\\
      &=\liminf_{k\to\infty} \frac{\sum_{i=1}^{k}\log\varphi(2i)}{\sum_{i=1}^{k+1}\log\varphi(2i-1) +\sum_{i=1}^{k}\log\varphi(2i)}\nonumber\\
      &=\frac{1}{1+\limsup_{k\to\infty}\frac{\sum_{i=1}^{k+1}\log\varphi(2i-1)}{\sum_{i=1}^{k}\log\varphi(2i)}}.
\end{align}   
We deduce from \eqref{odd1} and \eqref{even1} that
$$\dim_{\mathrm{H}}L(\varphi)\geq \dim_{\mathrm{H}} E\geq\frac{1}{1+\beta},$$
where $\beta=\limsup\limits_{n\to\infty}\frac{\log\varphi(n+1)+\log\varphi(n-1)+\cdots+\log\varphi(n+1-2\lfloor n/2\rfloor)}{\log\varphi(n)+\log\varphi(n-2)+\cdots+\log\varphi(n-2\lfloor(n-1)/2\rfloor)}$.
\end{enumerate}

\subsubsection{Upper bound}

We will present a cover of the set $L(\varphi)$.
By Lemma \ref{i2} (2) and the definition of $L(\varphi)$, for any $0<\varepsilon<1/3$ and sufficiently large $n$, we have 
\begin{equation}\label{ff}
\frac{\varphi(n+1)}{\varphi(n)}>\frac{1+\varepsilon}{1-\varepsilon}\ \ \text{and}\ \ 
1-\varepsilon<\frac{L_n(x)}{\varphi(n)}< 1+\varepsilon.
\end{equation}
Combining \eqref{ff} with the definition of $L_n(x)$, we obtain 
$$a_n(x)a_{n+1}(x)\le L_n(x)< (1+\varepsilon)\varphi(n)\ \ \text{for sufficiently large $n$}.$$
We claim that
\[a_n(x)a_{n+1}(x)>(1-\varepsilon)\varphi(n)\ \ \text{for sufficiently large $n$}.\] 
Indeed, we deduce from \eqref{ff} that 
$$L_{n-1}(x)\le (1+\varepsilon)\varphi(n-1)<(1-\varepsilon)\varphi(n)< L_n(x)=\max\{L_{n-1}(x), a_n(x)a_{n+1}(x)\},$$
which implies $a_n(x)a_{n+1}(x)=L_n(x)>(1-\varepsilon)\varphi(n)$ for sufficiently large $n$. Clearly, we have
$$L(\varphi)\subseteq \bigcup_{N=1}^{\infty}E(\varphi,N),$$
where $E(\varphi,N)$ is defined as
$$E(\varphi,N):=\{x\in (0,1):(1-\varepsilon)\varphi(n)< a_n(x)a_{n+1}(x)<(1+\varepsilon)\varphi(n), \forall n\ge N\}.$$
It suffices to estimate the upper bound for the Hausdorff dimension of $E(\varphi, N)$ for all $N\ge 1$. We only consider the case $N=1$, the same method can be used in other cases. For any $n\ge 1$, set
$$D_{n+1}(\varphi):=\left\{(\sigma_1,\ldots,\sigma_{n+1})\in \mathbb{N}^{n+1}: (1-\varepsilon)\varphi(k)< \sigma_k\sigma_{k+1}<(1+\varepsilon)\varphi(k), \forall 1\le k\le n\right\}.$$
For any $(\sigma_1,\ldots,\sigma_{n+1})\in D_{n+1}(\varphi)$, let
\begin{equation}\label{jn2}
J_{n+1}(\sigma_1,\ldots,\sigma_{n+1}):=\bigcup_{\sigma_{n+2}:~(1-\varepsilon)\varphi(n+1)< \sigma_{n+1}\sigma_{n+2}<(1+\varepsilon)\varphi(n+1)} I_{n+2}(\sigma_1,\ldots,\sigma_{n+1},\sigma_{n+2}).
\end{equation}
Then, we have
\begin{equation}\label{eps}
E(\varphi,1)=\bigcap_{n=1}^{\infty}\bigcup_{(\sigma_1,\ldots,\sigma_{n+1})\in D_{n+1}(\varphi)}J_{n+1}(\sigma_1,\ldots,\sigma_{n+1}).
\end{equation}
For any $(\sigma_1,\ldots,\sigma_{n+1})\in D_{n+1}(\varphi)$, we shall estimate the length of $J_{n+1}(\sigma_1,\ldots,\sigma_{n+1})$ and the cardinality of the set $D_{n+1}(\varphi)$. 
It follows from \eqref{lengthIn} and \eqref{jn2} that
\begin{align}\label{delt2}
 \nonumber |J_{n+1}(\sigma_1,\ldots,\sigma_{n+1})| & \leq \sum_{\sigma_{n+1}\sigma_{n+2}>(1-\varepsilon)\varphi(n+1)} |I_{n+2}(\sigma_1,\ldots,\sigma_{n+1},\sigma_{n+2})| \\
\nonumber   & \le \sum_{\sigma_{n+1}\sigma_{n+2}>(1-\varepsilon)\varphi(n+1)}  \left(\frac{1}{\sigma_1\cdots\sigma_{n+1}\sigma_{n+2}}\right)^{-2}\\
\nonumber   &= \sum_{\sigma_{n+1}\sigma_{n+2}>(1-\varepsilon)\varphi(n+1)}\frac{1}{\sigma_1} \cdot\frac{1}{\sigma_1\sigma_2}\cdot\frac{1}{\sigma_2\sigma_3}\ldots \frac{1}{\sigma_{n}\sigma_{n+1}}\cdot\frac{1}{\sigma_{n+1}}\cdot\frac{1}{\sigma^{2}_{n+2}}\\
  \nonumber & \le \left(\frac{1}{1-\varepsilon}\right)^{n} \frac{1}{\varphi(1)\varphi(2)\cdots\varphi(n)} \frac{1}{\sigma_{n+1}} \sum_{\sigma_{n+1}\sigma_{n+2}>(1-\varepsilon)\varphi(n+1)}\frac{1}{\sigma^{2}_{n+2}}\\
   &\le \left(\frac{1}{1-\varepsilon}\right)^{n+1} \frac{2}{\varphi(1)\varphi(2)\cdots\varphi(n)\varphi(n+1)}:=\delta_{n+1}.
\end{align}
For the cardinality of the set $D_{n+1}(\varphi)$, we have
\begin{equation}\label{dn2}
  \#D_{n+1}(\varphi)  \le \sum_{\sigma_1=1}^{(1+\varepsilon)\varphi(1)} \sum_{\sigma_2=\frac{(1-\varepsilon)\varphi(1)}{\sigma_1}}^{\frac{(1+\varepsilon)\varphi(1)}{\sigma_1}} \sum_{\sigma_3=\frac{(1-\varepsilon)\varphi(2)}{\sigma_2}}^{\frac{(1+\varepsilon)\varphi(2)}{\sigma_2}}
  \cdots \sum_{\sigma_{n+1}=\frac{(1-\varepsilon)\varphi(n)}{\sigma_{n}}}^{\frac{(1+\varepsilon)\varphi(n)}{\sigma_{n}}}1.
\end{equation}
Notice that for any $k\geq 1$,
\begin{equation}\label{gn}
\sum_{\sigma_{k+1}=\frac{(1-\varepsilon)\varphi(k)}{\sigma_{k}}}^{\frac{(1+\varepsilon)\varphi(k)}{\sigma_{k}}} \sum_{\sigma_{k+2}=\frac{(1-\varepsilon)\varphi(k+1)}{\sigma_{k+1}}}^{\frac{(1+\varepsilon)\varphi(k+1)}{\sigma_{k+1}}}1 =\sum_{\sigma_{k+1}=\frac{(1-\varepsilon)\varphi(k)}{\sigma_{k}}}^{\frac{(1+\varepsilon)\varphi(k)}{\sigma_{k}}} \frac{2\varepsilon \varphi(k+1)}{\sigma_{k+1}}
\leq(2\varepsilon)^{2}(1-\varepsilon)^{-1}\varphi(k+1).
\end{equation}
To continue the proof, we distinguish the two cases.

\begin{enumerate}
    \item[\textbf{Case 1:}] If $n=2k-1$ for any $k\geq 1$. Then by \eqref{dn2} and \eqref{gn}, we have
\[\#D_{n+1}(\varphi)=\#D_{2k}(\varphi)\leq (2\varepsilon)^{2k-1}(1-\varepsilon)^{-k}\varphi^2(1)\varphi(3)\cdots\varphi(2k-1).\]
We deduce from \eqref{eps}, \eqref{delt2} and Lemma \ref{sjgh} that 
\begin{align}\label{odd2}
  \dim_{\mathrm{H}}E(\varphi,1) &\leq \liminf_{n\to\infty}\frac{\log(\#D_{n+1}(\varphi))}{-\log \delta_{n+1}}\leq\liminf_{k\to\infty}\frac{\log(\#D_{2k}(\varphi))}{-\log \delta_{2k}}\nonumber\\
   & =\liminf_{k\to\infty} \frac{\sum_{i=1}^{k}\log\varphi(2i-1)}{\sum_{i=1}^{2k}\log\varphi(i)}
    =\liminf_{k\to\infty} \frac{\sum_{i=1}^{k}\log\varphi(2i-1)}{\sum_{i=1}^{k}\log\varphi(2i-1) +\sum_{i=1}^{k}\log\varphi(2i)}\nonumber\\
      &=\frac{1}{1+\limsup_{k\to\infty}\frac{\sum_{i=1}^{k}\log\varphi(2i)}{\sum_{i=1}^{k}\log\varphi(2i-1)}}.
     \end{align}
     \item[\textbf{Case 2:}]  If $n=2k$ for any $k\geq 1$. Then by the same method used in \textbf{Case 1}, we obtain
\[\#D_{n+1}(\varphi)=\#D_{2k+1}(\varphi)\leq (2\varepsilon)^{2k}(1-\varepsilon)^{-k}(1+\varepsilon)\varphi(1)\varphi(2)\varphi(4)\cdots\varphi(2k).\]
Then, we have 
\begin{align}\label{even2}
  \dim_{\mathrm{H}}E(\varphi,1) &\leq \liminf_{n\to\infty}\frac{\log(\#D_{n+1}(\varphi))}{-\log \delta_{n+1}}\leq\liminf_{k\to\infty}\frac{\log(\#D_{2k+1}(\varphi))}{-\log \delta_{2k+1}}\nonumber\\
   & =\liminf_{k\to\infty} \frac{\sum_{i=1}^{k}\log\varphi(2i)}{\sum_{i=1}^{2k+1}\log\varphi(i)}
    =\liminf_{k\to\infty} \frac{\sum_{i=1}^{k}\log\varphi(2i)}{\sum_{i=1}^{k+1}\log\varphi(2i-1) +\sum_{i=1}^{k}\log\varphi(2i)}\nonumber\\
      &=\frac{1}{1+\limsup_{k\to\infty}\frac{\sum_{i=1}^{k+1}\log\varphi(2i-1)}{\sum_{i=1}^{k}\log\varphi(2i)}}.
\end{align}
\end{enumerate}
Thus, by \eqref{odd2} and \eqref{even2}, we conclude that
$$\dim_{\mathrm{H}}L(\varphi)\le \sup_{N\ge 1}\left\{\dim_{\mathrm{H}}E(\varphi,N)\right\}\le \frac{1}{1+\beta},$$
where $\beta=\limsup\limits_{n\to\infty}\frac{\log\varphi(n+1)+\log\varphi(n-1)+\cdots+\log\varphi(n+1-2\lfloor n/2\rfloor)}{\log\varphi(n)+\log\varphi(n-2)+\cdots+\log\varphi(n-2\lfloor(n-1)/2\rfloor)}.$

\section{Proof of Theorem \ref{mainthm3}}
\subsection{The case $\log\varphi(n)=\sqrt{n}/R(n)$.}
In this case, we will prove $\dim_{\mathrm{H}}L(\varphi)=1$. For any $k\ge 1$, let $n_k=k^2$, $s_k=\lfloor\varphi(n_k)\rfloor=\left\lfloor e^{\frac{k}{R(k^2)}}\right\rfloor$ and $t_k=\lfloor\varphi(n_k)\rfloor/k$. Then we obtain
\begin{equation}\label{eq13}
    \limsup_{k\to\infty}\frac{1}{k}\log \varphi(k^2)\le\limsup_{k\to\infty}\frac{1}{k}\frac{k}{R(k^2)}=\limsup_{k\to\infty}\frac{1}{R(k^2)}=0.
\end{equation}
Define
 $E(\{n_k\},\{s_k\},\{t_k\})$ as in \eqref{subset1}. By Lemma \ref{i2} (3), we have
$$\lim_{n\to\infty}\left(\log\varphi(m_{k+1})-\log \varphi(m_k)\right)=0~~\text{and}~~\lim_{n\to\infty}\frac{\varphi(m_{k+1})}{\varphi(m_k)}=1.$$ 
Hence, we obtain $$\lim\limits_{n\to\infty}\frac{L_n(x)}{\varphi(n)}=1,$$
which implies that
$E(\{n_k\},\{s_k\},\{t_k\})\subseteq L(\varphi).$
Using the same method as in  Theorem \ref{main thm1} for the case $0\le \rho<1/2$ and \eqref{eq13}, we obtain
$$\dim_{\mathrm{H}}L(\varphi)\ge \dim_{\mathrm{H}}E(\{n_k\},\{s_k\},\{t_k\})= 1.$$
We get the desired result.

\subsection{The case $\log\varphi(n)=\sqrt{n}R(n)$.}

For the lower bound of $\dim_{\mathrm{H}}L(\varphi)$,
 let $s_n=\sqrt{\varphi(n)}$ and $t_n=\sqrt{\varphi(n)/n}$, define
 $B(\{s_n\}, \{t_n\}, 1)$ as in (\ref{Bst}). Then, we have
  $$B\left(\{s_n\}, \{t_n\}, 1\right)\subseteq L(\varphi).$$
We can apply the same method as in Theorem \ref{main thm1} for the case $1/2<\rho<1$. Since this follows as a corollary of Lemma \ref{LemmaLR}, the proof is omitted.

For the upper bound, we follow the proof of Theorem \ref{main thm1} for the case $1/2<\rho\le 1$. For any $k\ge 1$, define $n_k=k^2$. From Lemma \ref{i2} (3), we deduce that $$\lim\limits_{k\to\infty}\frac{\varphi(n_{k+1})}{\varphi(n_k)}=\infty.$$ 
Let $x\in L(\varphi)$ be fixed. By applying the same arguments as in Proposition \ref{TandS}, we obtain that for sufficiently large $k\in\mathbb{N}$, there exists $j_k\ge 1$ such that $n_{k-1}<j_k\leq n_k$ and $L_{n_k}(x)=a_{j_k}(x)a_{j_k+1}(x)$. Consequently,  for any $s\in (1/2,1)$ and $0<\varepsilon<2s-1$, there exists $K\ge 1$ such that for all $k\ge K$,
\begin{equation}
   (1-\varepsilon)\varphi(n_k)\le L_{n_k}(x)\le S_{n_k}(x)-S_{n_{k-1}}(x)<S_{n_k}(x)\le (1+\varepsilon)\varphi(n_k). 
\end{equation}
 For any $k\ge K$, set
$$ \widetilde{M}_k=\big\{i\in \mathbb{N}:(1-\varepsilon)\varphi(n_k)\le i\le (1+\varepsilon)n_k\varphi(n_k)\big\}.$$
For any $K_1\ge K$, define
$$\widetilde{J}(\varphi,k, K_1)= \Big\{I_{n_k+1}(a_1,\ldots,a_{n_k+1}): \sum_{j=n_{\ell-1}+1}^{n_{\ell}}a_ja_{j+1}=m_{\ell} ~\text{with}~m_{\ell}\in M_{\ell},~K_1\le \ell \le k\Big\}.$$
It follows that 
$$L(\varphi)\subseteq\bigcup_{K_1=K}^{\infty}\bigcap_{k=K_1}^{\infty}\widetilde{J}(\varphi,k, K_1).$$
As in the proof of the case $1/2<\rho\le 1$ in Theorem \ref{main thm1}, we estimate the sum
\begin{align*}
    \sum_{I_{n_k+1}\subseteq\widetilde{J}(\varphi,k,K)}|I_{n_k+1}|^s
    &\le \prod_{\ell=K}^k\left\{Cn_{\ell}e^{(1+\varepsilon-2s)\log\varphi(n_{\ell})}\left(\frac{9}{2}(2+\xi(2s-\varepsilon))\right)^{\frac{n_{\ell}-n_{\ell-1}}{2}}\right\}\\
    &= \prod_{\ell=K}^k\left\{C\ell^2e^{(1+\varepsilon-2s)\ell R(\ell^2)}\left(\frac{9}{2}(2+\xi(2s-\varepsilon))\right)^{\frac{2\ell-1}{2}}\right\},
\end{align*}
where $C$ is a constant independent of $\ell$. Since $R(x)$ is a regularly increasing function with index 0, by using the same method as in \eqref{eq4.7}, we can conclude that
\begin{equation*}
 \ell^2e^{(1+\varepsilon-2s)\ell R(\ell^2)}\left(\frac{9}{2}(2+\xi(2s-\varepsilon))\right)^{\frac{2\ell-1}{2}}
<e^{(1+\varepsilon-2s)\ell},
\end{equation*}
for sufficiently large $\ell$. This implies
$$\liminf_{k\to\infty}\sum_{I_{n_k+1}\subseteq\widetilde{J}(\varphi,k,K)}|I_{n_k+1}|^s=0.$$
Hence, we conclude that
$$\dim_{\mathrm{H}}L(\varphi)\le 1/2.$$

\section{Proof of Theorem \ref{mainthm2}}

\subsection{Lower bound}

We use Lemma \ref{fangkey} to construct the Cantor-type subset of $L(\psi)$. For any $k\in \mathbb{N}$, let 
\begin{equation}\label{zzn}
n_k=k^2,~~ s_k=e^{ck}~~\text{and}~~ t_k=\frac{s_k}{k}.
\end{equation}
Then, for any $n\ge 1$, there exists $k\in \mathbb{N}$ such that $n_k\le n< n_{k+1}$. Thus, $\psi(n)=e^{c\lfloor \sqrt{n}\rfloor}=e^{ck}$.
\begin{proposition}
For the above sequences $\{n_k\}, \{s_k\}\ \text{and}\ \{t_k\}$, we have 
  $$E\left(\{n_k\}, \{s_k\}, \{t_k\}\right)\subseteq L(\psi).$$
\end{proposition}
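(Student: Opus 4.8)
The plan is to verify directly, from the structure \eqref{zzn}, that every $x\in E(\{n_k\},\{s_k\},\{t_k\})$ satisfies $\lim_{n\to\infty}L_n(x)/\psi(n)=1$; Lemma \ref{fangkey} is only needed afterwards, for the dimension count.

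First I would pin down $\psi$ along the blocks determined by the sparse indices $n_k=k^2$. If $n_k\le n<n_{k+1}$, i.e. $k^2\le n<(k+1)^2$, then $\lfloor\sqrt n\rfloor=k$, so $\psi(n)=e^{ck}=s_k$. Hence it suffices to show that $L_n(x)=(1+o(1))s_k$ as $k\to\infty$, uniformly for $n$ in the $k$-th block.

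Next I would exploit the geometry of \eqref{zzn}. The large indices satisfy $n_{k+1}-n_k=2k+1\ge 3$, so no two large partial quotients are consecutive, and since neither $n_k-1$ nor $n_k+1$ is a perfect square, each $a_{n_k}(x)$ is flanked by quotients from the bounded part of the construction, which one takes to be equal to $1$ (exactly as in \eqref{subset1}; this does not affect the Hausdorff dimension computed via Lemma \ref{fangkey}). Consequently every product $a_i(x)a_{i+1}(x)$ carries at most one large factor: a product not adjacent to any $n_j$ is at most $M^2$, while $a_{n_j-1}(x)a_{n_j}(x)=a_{n_j}(x)a_{n_j+1}(x)=a_{n_j}(x)\in(s_j,(1+1/j)s_j]$. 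Fixing $n$ in the $k$-th block, the quotients $a_1(x),\dots,a_n(x)$ feeding $L_n(x)$ do not yet include the next large quotient $a_{n_{k+1}}(x)$ (as $n<n_{k+1}$), and among the large ones that appear, $a_{n_k}(x)$ is the last. Since $s_j=e^{cj}$ grows geometrically, for all large $k$ one has $a_{n_k}(x)>s_k>(1+\tfrac1{k-1})s_{k-1}\ge a_{n_{k-1}}(x)>\cdots$ and also $a_{n_k}(x)>s_k>M^2$, so the maximum defining $L_n(x)$ is attained at $a_{n_k}(x)$: thus $L_n(x)=a_{n_k}(x)$ for every $n$ in the $k$-th block, once $k$ is large.

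Then I would conclude: for such $n$,
\[
1<\frac{L_n(x)}{\psi(n)}=\frac{a_{n_k}(x)}{s_k}\le 1+\frac1k\xrightarrow[n\to\infty]{}1 ,
\]
so $x\in L(\psi)$ and $E(\{n_k\},\{s_k\},\{t_k\})\subseteq L(\psi)$. The delicate step is the identification $L_n(x)=a_{n_k}(x)$ on the whole $k$-th block; it rests simultaneously on the sparsity $n_{k+1}-n_k\to\infty$ (so $a_{n_{k+1}}(x)$ contributes nothing while $n<n_{k+1}$), on $s_{k+1}/s_k=e^{c}>1$ (so all earlier large blocks are dominated), and on the flanking quotients being $1$ (so each large product is $a_{n_k}(x)$ itself and not a bounded multiple of it). Granting this, the convergence is immediate from $t_k/s_k=1/k\to0$.
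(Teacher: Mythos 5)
Your argument is, in structure, the paper's: localize $\psi$ to the blocks $[n_k,n_{k+1})$ on which $\psi(n)=s_k=e^{ck}$, and show that the running maximum $L_n(x)$ is $(1+o(1))s_k$ throughout each block. But the key sentence, ``the quotients $a_1(x),\dots,a_n(x)$ feeding $L_n(x)$ do not yet include the next large quotient $a_{n_{k+1}}(x)$ (as $n<n_{k+1}$),'' is wrong, and the paper's own proof commits the same slip. By definition $L_n(x)=\max_{1\le i\le n}a_i(x)a_{i+1}(x)$ depends on $a_1(x),\dots,a_{n+1}(x)$, because the last product is $a_n a_{n+1}$. Take $n=n_{k+1}-1=(k+1)^2-1$: this $n$ still lies in the $k$-th block, so $\psi(n)=e^{ck}$, yet $a_n(x)a_{n+1}(x)=a_{n_{k+1}-1}(x)\,a_{n_{k+1}}(x)=a_{n_{k+1}}(x)>s_{k+1}=e^{c(k+1)}$ (the flanking quotient being $1$). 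Hence $L_n(x)/\psi(n)>e^{c}$ along the subsequence $n=(k+1)^2-1$, and $L_n(x)/\psi(n)\not\to1$. With $n_k=k^2$ as written in \eqref{zzn}, the inclusion $E\subseteq L(\psi)$ is literally false.

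The remedy is small: take $n_k=k^2+1$, the off-by-one convention the paper already uses for $m_k$ in \eqref{subset1}. Then $a_{n_\ell}$ enters $L_n$ precisely when $n+1\ge n_\ell$, i.e.\ $n\ge\ell^2$, which is exactly where $\lfloor\sqrt n\rfloor$ jumps to $\ell$; the jumps of $L_n$ and of $\psi$ are synchronized, and $L_n(x)\in(s_k,(1+1/k)s_k]$ for every $n$ with $\lfloor\sqrt n\rfloor=k$ once $k$ is large. The constants $\alpha_1=c/2$ and $\alpha_2=0$ in \textbf{(H3)} are unchanged, so Lemma~\ref{fangkey} still yields $\dim_{\mathrm{H}}E=\theta(c)$. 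Everything else in your write-up — constancy of $\psi$ on blocks, the geometric dominance $s_{k+1}/s_k=e^c>1+1/k$, flanking quotients set to $1$ so each large product equals $a_{n_k}(x)$ exactly — is correct and survives the correction.
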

\begin{proof}
  Let $a_j(x)=1$ for any $j\neq n_k$ with $k\ge 1$. Then, for any $x\in E\left(\{n_k\}, \{s_k\}, \{t_k\}\right)$, we have
  $$e^{ck}=s_k\le a_{n_k}(x)\le s_k+t_k= \left(1+\frac{1}{k}\right)e^{ck}.$$
  For sufficiently large $n$, there exists $k$ such that
  $n_k\le n<n_{k+1}$, which implies that
  $$e^{ck}\leq a_{n_k}(x)a_{n_k+1}(x)=L_n(x)\leq\left(1+\frac{1}{k}\right)e^{ck}.$$
  Therefore,
 $$\lim_{n\to\infty}\frac{L_n(x)}{\psi(n)}=1.$$
\end{proof}
By \eqref{zzn} and \textbf{(H3)}, it can be checked that $\alpha_1=\frac{c}{2}\ \ \text{and}\ \ \alpha_2=0$. Then by Lemma \ref{fangkey}, we have
$$\dim_{\mathrm{H}}L(\psi)\ge \dim_{\mathrm{H}}E\left(\{n_k\}, \{s_k\}, \{t_k\}\right)=\theta_1(c/2,0)=\theta(c).$$

\subsection{Upper bound}
By classifying the value of the product of consecutive partial quotients, we shall construct a big Cantor-type set containing $L(\psi)$.
For any $c>0$ and integer $m\geq0$, let 
\[\Pi_{n}^{(m)}(x):=\prod_{a_i(x)a_{i+1}(x)> e^m\atop 1\le i\le n}a_i(x)a_{i+1}(x)\ \ \text{and}\ \
\Gamma_{m,n}(c)=\left\{x\in(0,1):\Pi_{n}^{(m)}(x)> e^{\frac{cn}{2}}\right\}.\]
Then, denote by 
\begin{equation}\label{gma}
\Gamma(c):=\bigcap\limits_{m=0}^{\infty}\Gamma_{m}(c).
\end{equation}
Here, the set
$\Gamma_m(c)$ is given by
\begin{equation}\label{tmc}
\Gamma_m(c):=\bigcap\limits_{N=1}^{\infty}\bigcup\limits_{n=N}^{\infty}\Gamma_{m,n}(c).
\end{equation}
  \begin{proposition}\label{X(c)2}
    For any $\delta>0$, we have $L(\psi)\subseteq \Gamma(c-\delta)$.
  \end{proposition}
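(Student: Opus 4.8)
The plan is to strip away the three nested intersections defining $\Gamma(c-\delta)$ and to show directly that every $x\in L(\psi)$ lies in $\Gamma_{m,n}(c-\delta)$ for infinitely many $n$, for each fixed $m\ge 0$. So fix $x\in L(\psi)$ and $\delta>0$, and choose $\varepsilon\in(0,1)$ small enough that $(1-\varepsilon)e^{c}>1+\varepsilon$. Since $L_n(x)/\psi(n)\to 1$, there is $N$ such that $(1-\varepsilon)\psi(n)<L_n(x)<(1+\varepsilon)\psi(n)$ for all $n\ge N$.

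The first substantive step is to locate the indices at which $L_n(x)$ increases. Because $\log\psi(n)=c\lfloor\sqrt n\rfloor$, the function $\psi$ equals $e^{ck}$ on the block $\{k^2,k^2+1,\dots,(k+1)^2-1\}$ and jumps by the factor $e^{c}$ at each perfect square. Applying the two-sided bound at $n=(k+1)^2-1$ and at $n=(k+1)^2$, and using the recursion $L_{(k+1)^2}(x)=\max\{L_{(k+1)^2-1}(x),\,a_{(k+1)^2}(x)a_{(k+1)^2+1}(x)\}$, the strict inequality $(1-\varepsilon)e^{c(k+1)}>(1+\varepsilon)e^{ck}$ forces the maximum to be attained by the freshly added term. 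After reindexing this yields
\[
(1-\varepsilon)e^{ck}<a_{k^2}(x)\,a_{k^2+1}(x)<(1+\varepsilon)e^{ck}\qquad\text{for all sufficiently large }k.
\]

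Now fix $m\ge 0$ and pick $k_0=k_0(m)$ with $(1-\varepsilon)e^{ck_0}>e^{m}$, so that for every $k\ge k_0$ the factor $a_{k^2}(x)a_{k^2+1}(x)$ exceeds $e^{m}$ and therefore occurs in $\Pi_{K^2}^{(m)}(x)$ whenever $K\ge k$. The indices $k^2$ ($k\ge 1$) are pairwise distinct and every factor in $\Pi_{K^2}^{(m)}(x)$ is larger than $1$, so discarding all factors except those indexed by $k^2$ with $k_0\le k\le K$ gives
\[
\Pi_{K^2}^{(m)}(x)\ \ge\ \prod_{k=k_0}^{K}a_{k^2}(x)a_{k^2+1}(x)\ \ge\ (1-\varepsilon)^{K-k_0+1}\exp\!\Big(c\sum_{k=k_0}^{K}k\Big).
\]
Since $k_0$ is now fixed, $\sum_{k=k_0}^{K}k=\frac{K^2}{2}+O(K)$, so taking logarithms yields $\log\Pi_{K^2}^{(m)}(x)\ge\frac{c}{2}K^2-O(K)$, which exceeds $\frac{c-\delta}{2}K^2$ for all large $K$. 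Hence $x\in\Gamma_{m,K^2}(c-\delta)$ for infinitely many $K$, i.e.\ $x\in\Gamma_m(c-\delta)$; since $m$ was arbitrary, $x\in\Gamma(c-\delta)$.

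The main obstacle is the middle step: proving rigorously that, for $n$ in the block $k^2\le n<(k+1)^2$, the maximal product $L_n(x)$ is attained exactly at the index $k^2$ and is of size comparable to $e^{ck}$. This rests on the jump of $\psi$ at perfect squares combined with the monotonicity of $n\mapsto L_n(x)$; once it is established, what remains is the routine asymptotic $\sum_{k\le K}k\sim K^2/2$ together with a suitable choice of the parameters $\varepsilon$, $k_0$ and $K$ depending on $\delta$, $c$ and $m$.
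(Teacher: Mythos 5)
Your proof is correct and follows essentially the same route as the paper: identify, within each block $((k-1)^2,k^2]$, a product of consecutive partial quotients of size $\approx e^{ck}$, then multiply these over $k_0\le k\le K$ to get $\log\Pi^{(m)}_{K^2}(x)\ge\frac{c}{2}K^2-O(K)$. The only small variation is that you pin down the maximizing index to be exactly $k^2$ by exploiting the jump of $\psi$ at perfect squares, whereas the paper invokes the mechanism of Proposition~\ref{TandS} and only locates that index somewhere in $((k-1)^2,k^2]$; both versions yield the same lower bound on $\Pi^{(m)}_{K^2}(x)$.
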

  \begin{proof}
  Let $x\in L(\psi)$. For any $0<\varepsilon<\frac{e-1}{e}$ and $m\ge 0$, there exists $K_0(\varepsilon, m)$ such that for any $k\ge K_0$, 
  \begin{equation}\label{kk2}
L_{k^2}(x)\ge (1-\varepsilon)e^{ck}\ge e^m.
\end{equation}
    Using the same method as in Proposition \ref{TandS}, we can verify that
    $$L_{k^2}(x)=a_{j_k}(x)a_{j_k+1}(x),$$
where $(k-1)^2< j_k\le k^2.$ From \eqref{kk2}, we deduce that 
    $$\Pi_{k^2}^{(m)}(x)  \ge \prod_{i=K_0}^{k} L_{i^2}(x) \ge (1-\varepsilon)^{k-K_0+1}e^{\frac{c}{2}(k^2-K_0^2)} \ge e^{\frac{c}{2}(k^2-K_0^2)-k} \ge e^{\frac{(c-\delta)}{2}k^2},$$
    where $\delta$ depends on $K_0$ and the penultimate inequality holds for $(1-\varepsilon)^k\ge e^{-k}$. This implies that
    $$x\in \Gamma(c-\delta).$$  
  \end{proof}
 
 In the following, we shall estimate the upper bound of $\dim_{\mathrm{H}}\Gamma(c)$. 
\begin{theorem}\label{X(c)1}
  Let $c>0$. Then
  $$\dim_{\mathrm{H}}\Gamma(c)\le \theta(c),$$
  where $\theta(c)$ is the unique real solution of the equation $P(\theta)=c\left(\theta-\frac{1}{2}\right)$.
\end{theorem}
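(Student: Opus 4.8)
The plan is to cover $\Gamma(c)$ efficiently using the level structure supplied by the sets $\Gamma_{m,n}(c)$ and then apply the covering lemma (Lemma \ref{sjgh}) together with the pressure function. Fix $c>0$ and let $s>\theta(c)$; it suffices to show $\mathcal{H}^s(\Gamma(c))=0$, since letting $s\downarrow\theta(c)$ then gives the bound. Because $P$ is strictly decreasing and continuous on $(1/2,\infty)$ with the singularity at $1/2$, and because $s>\theta(c)$ forces $P(s)<c(s-1/2)$, we may pick a small $\varepsilon>0$ and a large $m$ so that, writing the ``truncated pressure'' at level $e^m$ (i.e.\ the sum over $a_1,\dots,a_n$ with the extra constraint that those $a_i a_{i+1}$ exceeding $e^m$ are separated out), the per-step growth rate is still strictly below $c(s-1/2)$. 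Concretely, for each fixed $m$ the set $\Gamma_m(c)=\bigcap_N\bigcup_{n\ge N}\Gamma_{m,n}(c)$, so for every $N$ the cylinders $I_n$ with $n\ge N$ meeting $\Gamma_{m,n}(c)$ form a cover of $\Gamma_m(c)\supseteq\Gamma(c)$; I will bound $\sum |I_n|^s$ over such cylinders.

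The key step is the combinatorial/analytic estimate of $\sum_{I_n\subseteq\Gamma_{m,n}(c)}|I_n|^s$. Split each admissible word $(a_1,\dots,a_n)$ according to the set $S\subseteq\{1,\dots,n\}$ of indices $i$ with $a_ia_{i+1}>e^m$; on this set the product $\prod_{i\in S}a_ia_{i+1}$ exceeds $e^{cn/2}$ by the definition of $\Gamma_{m,n}(c)$. Using $|I_n|\asymp \prod a_k^{-2}$ (from \eqref{lengthIn}) and the bounded–distortion estimates of Lemma \ref{lemma2.2}, the sum factorizes, up to a geometric factor $C^n$, into a product over maximal runs of ``small'' indices (each contributing a truncated partition-function factor, controlled by the truncated pressure $P_m(s)\to P(s)$ as $m\to\infty$) and a constrained sum over the ``large'' block, where the constraint $\prod_{i\in S} a_ia_{i+1}>e^{cn/2}$ lets us trade $2s$ powers of each large partial quotient for a factor $e^{-c n (s-1/2)+o(n)}$ — exactly as in the passage \eqref{eq4.4}–\eqref{eq4.7}, using Lemma \ref{LiaoRams2} and Lemma \ref{yyl54} to handle the product/divisor bookkeeping. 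Collecting exponents, the whole sum is bounded by $e^{n(P_m(s)+\sigma(\varepsilon,m)-c(s-1/2))+o(n)}$, where $\sigma\to 0$ as $\varepsilon\to0$, $m\to\infty$. Since $P(s)<c(s-1/2)$, choosing $\varepsilon$ small and $m$ large makes the exponent negative, so $\sum_{I_n\subseteq\Gamma_{m,n}(c)}|I_n|^s\to 0$ as $n\to\infty$; feeding this into Lemma \ref{sjgh} gives $\dim_{\mathrm H}\Gamma_m(c)\le s$, hence $\dim_{\mathrm H}\Gamma(c)\le\dim_{\mathrm H}\Gamma_m(c)\le s$, and letting $s\downarrow\theta(c)$ finishes the proof.

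The main obstacle is the second step: making the factorization of $\sum|I_n|^s$ rigorous when the positions and the number of ``large'' indices are not fixed in advance. One must sum over all admissible configurations of the large set $S$ — there are at most $2^n$ of them, contributing only an extra $2^n$ factor that is absorbed into the geometric constant — while ensuring that splitting the word at the boundaries of $S$ costs only a bounded-distortion factor per boundary (Lemma \ref{lemma2.2}) and that consecutive large indices are handled correctly (a pair of adjacent large indices shares a partial quotient, so one should, as in \eqref{eq4.1'}, pass to the variables $b_j=a_ja_{j+1}$ on a sublattice of indices of one fixed parity, which is why the statement naturally produces the parity-split constant and ultimately the clean exponent $c(s-1/2)$). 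A secondary technical point is the convergence $P_m(s)\to P(s)$ of the truncated pressures and the uniform control of the error terms $\sigma(\varepsilon,m)$ and the $o(n)$ terms coming from $\log n_k$, $\log\varphi$-type contributions and from $\pi(b_j)\le c_\varepsilon b_j^\varepsilon$; these are routine given the estimates already assembled in Section \ref{main thm1}, but they must be quantified carefully so that the negative leading term dominates for all large $n$.
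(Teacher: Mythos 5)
Your overall strategy — cover $\Gamma_m(c)$ by cylinders, split each cylinder according to the positions $S$ of the ``large'' products, pay a bounded-distortion cost per boundary, and feed the small blocks into the pressure and the large blocks into the constraint $\prod_{i\in S}a_ia_{i+1}>e^{cn/2}$ — is indeed the shape of the paper's proof. But there is a genuine gap at the step you label as the ``main obstacle,'' and the way you propose to handle it does not work.

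You sum over all possible configurations of $S$ and assert that ``there are at most $2^n$ of them, contributing only an extra $2^n$ factor that is absorbed into the geometric constant.'' This cannot be absorbed. The $2^n$ adds a term $n\log 2$ to the exponent, while the negative margin you are relying on, $c(s-\tfrac12)-P(s)$, is an arbitrarily small positive number once $s$ is taken close to $\theta(c)$. Equally, a ``bounded-distortion factor $C^n$'' with $C$ a fixed constant would kill the argument for the same reason. Under this accounting the exponent $n\bigl(\log 2+P_m(s)+\sigma(\varepsilon,m)-c(s-\tfrac12)\bigr)$ is positive for $s$ near $\theta(c)$, so the covering sum does not tend to $0$ and you get no dimension bound below $1$. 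In other words, the error term $\sigma(\varepsilon,m)$ you want to send to $0$ has a hidden $\log 2$ sitting in it which never goes away.

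The missing idea, and the reason the paper fixes $m_*$ so large in \eqref{eq6.5}, is that the number of large positions is \emph{forced to be small}: each large pair contributes at least $m_*$ to $\lambda:=\sum_k\lambda_k$, so $\ell\le\lambda/m_*$. The paper parameterizes by $\lambda$ rather than by $n$, and then $\#\mathcal{A}_{n,\ell}\le\binom{n}{\ell}\le (en/\ell)^{\ell}\le(2e\lambda/(c\ell))^{\ell}\le(2em_*/c)^{\lambda/m_*}\le e^{\varepsilon\lambda}$ by Stirling, the constraint $\lambda>cn/2$, and the monotonicity of $\ell\mapsto(2e\lambda/(c\ell))^{\ell}$; similarly $\#\mathcal{B}_{\ell,\lambda}\le e^{\varepsilon\lambda}$, and the per-boundary distortion cost $(2^7e^2)^{\ell}\le e^{\varepsilon\lambda}$ for the same reason. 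All the combinatorial overhead is thus proportional to $\lambda$, not to $n$, with a coefficient $\varepsilon$ that genuinely tends to $0$ as $m_*\to\infty$; this is exactly what lets $\sum_{\lambda>cn/2}e^{(1-2s+\varepsilon(s+2))\lambda}$ contribute the clean $e^{(1-2s+o(1))cn/2}$ that beats $e^{n(P(s)+\varepsilon)}$. Your sketch never records the bound $\ell\le\lambda/m$, which is the load-bearing observation. A secondary mismatch: you invoke Lemma \ref{LiaoRams2} and Lemma \ref{yyl54}, the parity split, and the $b_j=a_ja_{j+1}$ change of variables; those belong to the upper-bound argument for the case $1/2<\rho\le 1$ in Section 3, not to the proof of Theorem \ref{X(c)1}, where the paper works directly with the pairwise-disjoint blocks $\{j_k,j_k+1\}$, the integers $\lambda_k=\lfloor\log a_{j_k}a_{j_k+1}\rfloor+1$, and the pressure bound \eqref{DEP}.
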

\begin{proof}
For any $\varepsilon>0$, choose positive integer $m_*>\max\left\{\frac{c}{2}, e^{8}\right\}$ large enough such that
  \begin{equation}\label{eq6.5}
    \max\left\{ (em_*)^{1/m_*}, \left(2em_*/c\right)^{1/m_*}\right\}\le e^{\varepsilon}.
  \end{equation}  
Then by \eqref{gma}, we have 
\begin{equation}\label{gmaa}
\Gamma(c)\subseteq \Gamma_{m_*}(c).  
\end{equation}
  It is sufficient to estimate the upper bound of the Hausdorff dimension of $\Gamma_{m_*}(c)$. By \eqref{tmc}, we first focus on the set $\Gamma_{m_*,n}(c)$. 
  Since $c>0$, there exists $N_0\in \mathbb{N}$ such that $e^{\frac{cn}{2}}>e^{m_*}$ for all $n\ge N_0$. Let $n\in \mathbb{N}$ with $n>N_0$ be  fixed. For any $x\in \Gamma_{m_*,n}(c)$, 
  there exists $1\le \ell \le \lfloor (n+1)/2\rfloor$
  with $1\le j_{\ell}\le n$ 
   and $j_k+1<j_{k+1}$ such that for all $1\le k\le \ell$, 
   \begin{equation}\label{eq6.3}
     a_{j_k}(x)a_{j_k+1}(x)> e^{m_*}\quad \text{and}\quad \prod_{k=1}^{\ell}a_{j_k}(x)a_{j_k+1}(x)> e^{\frac{cn}{2}}.
   \end{equation}
    Meanwhile, for all $1\le i\le n$ with $i\neq j_1,\ldots, j_{\ell}$, we have
     \begin{equation}\label{eq6.4}
       1\le a_i(x)a_{i+1}(x)\le e^{m_*}.
     \end{equation}       
     For any $1\le k \le \ell$, let $\lambda_k(x):=\lfloor\log a_{j_k}(x)a_{j_k+1}(x)\rfloor+1.$ 
     Then by (\ref{eq6.3}) and (\ref{eq6.4}), we have
     $$\lambda_1(x)+\ldots+\lambda_{\ell}(x) >\max\left\{\frac{cn}{2},m_*\ell\right\}\ \ \text{and}\ \ 
     e^{\lambda_k(x)-1} <a_{j_k}(x)a_{j_k+1}(x)\le e^{\lambda_k(x)}.$$
We take some notations. For any $n,\ell, \lambda\in\mathbb{N}$ with $1\le \ell\le \lfloor (n+1)/2\rfloor$
and $\lambda>\max\left\{cn/2, m_*\ell\right\}$, let
  $$\mathcal{A}_{n,\ell}:=\left\{(j_1,\ldots,j_{\ell})\in\mathbb{N}^{\ell}: j_k+1<j_{k+1}~\text{for all}~1\le k\le \ell ~\text{and}~1\le \ell\le \lfloor (n+1)/2\rfloor \right\},$$
  and
  $$\mathcal{B}_{\ell,\lambda} :=\left\{(\lambda_1,\ldots,\lambda_{\ell})\in\mathbb{N}^{\ell}:
    \lambda_1,\ldots,\lambda_{\ell}>m_*, ~ \lambda_1+\cdots+\lambda_{\ell}=\lambda\right\}.$$
  Let $\boldsymbol{j}_{\ell}:=(j_1,\ldots,j_{\ell})\in \mathcal{A}_{n,\ell}$ and $\boldsymbol{\lambda}_{\ell}:= (\lambda_1,\ldots,\lambda_{\ell})\in \mathcal{B}_{\ell,\lambda}$.
It follows that
  $$\Gamma_{m_*,n}(c)\subseteq\bigcup_{\ell=1}^{\lfloor(n+1)/2\rfloor}\bigcup_{\lambda>\max\{cn/2, m_*\ell\}}\bigcup_{\boldsymbol{j}_{\ell}\in\mathcal{A}_{n,\ell}}\bigcup_{ \boldsymbol{\lambda}_{\ell}\in \mathcal{B}_{\ell,\lambda}} \Gamma_{\boldsymbol{j}_{\ell}}^{\boldsymbol{\lambda}_{\ell}}(c),$$
  where 
  \begin{align*}
\Gamma_{\boldsymbol{j}_{\ell}}^{\boldsymbol{\lambda}_{\ell}}(c) :=\big\{x\in (0,1): &~1\le a_i(x)a_{i+1}(x)\le e^{m_*} ~\text{for all}~ 1\leq i\leq n ~\text{with}~ i\neq j_1,\ldots, j_{\ell}; \\
     &~e^{\lambda_k(x)-1}<a_{j_k}(x)a_{j_k+1}(x)\le e^{\lambda_k(x)} ~\text{for all}~ 1\le k\le \ell\big\}.
  \end{align*}
Now, we provide a symbolic description of the structure of $\Gamma_{m_*,n}(c)$. For any $n\ge 1$, let
  \begin{align}\label{cjn}
   \nonumber \mathcal{C}_{\boldsymbol{j}_{\ell}}^{\boldsymbol{\lambda}_{\ell}}(n+1) 
    :=\Big\{(\sigma_1,\ldots,\sigma_{n+1})\in \mathbb{N}^{n+1}:&~1\le \sigma_i\sigma_{i+1}\le e^{m_*} ~\text{for all}~ 1\leq i \leq n ~\text{with}~ i\neq j_1,\ldots, j_{\ell}; \\
     &~e^{\lambda_k(x)-1}<\sigma_{j_k}\sigma_{j_k+1}\le e^{\lambda_k(x)} ~\text{for all}~ 1\le k\le \ell \Big\}.
  \end{align} 
 Therefore,
 \begin{equation}\label{tmnc}
 \Gamma_{m_*,n}(c)\subseteq\bigcup_{\ell=1}^{\lfloor (n+1)/2\rfloor}\bigcup_{\lambda>\max\{cn/2, m_*\ell\}}\bigcup_{\boldsymbol{j}_{\ell}\in\mathcal{A}_{n,\ell}}\bigcup_{ \boldsymbol{\lambda}_{\ell}\in \mathcal{B}_{\ell,\lambda}}\bigcup_{(\sigma_1,\ldots,\sigma_{n+1})\in \mathcal{C}_{\boldsymbol{j}_{\ell}}^{\boldsymbol{\lambda}_{\ell}}(n+1)}I_{n+1}(\sigma_1,\ldots,\sigma_{n+1}).
\end{equation}

 The following is to estimate the cardinalities of $\mathcal{A}_{n,\ell}$ and $\mathcal{B}_{\ell,\lambda}$, as well as the diameter of $I_{n+1}(\sigma_1, \dots, \allowbreak\sigma_{n+1})$ with $(\sigma_1, \dots, \sigma_{n+1})\in \mathcal{C}_{\boldsymbol{j}_{\ell}}^{\boldsymbol{\lambda}_{\ell}}(n+1)$. 
 Before proceeding, we state a version of the Stirling formula (see \cite{R55}) that will be used in the sequel: 
 \begin{equation} \label{str}
 \sqrt{2\pi}n^{n+\frac{1}{2}}e^{-n}\leq n!\leq en^{n+\frac{1}{2}}e^{-n},\ \ \forall\ n\geq1.
 \end{equation}
 Let $n, \ell, \lambda \in \mathbb{N}$ be fixed such that $1 \leq \ell \leq \lfloor (n+1)/2\rfloor$ and $\lambda > \max \{ cn/2, m_* \ell \}$. Then by \eqref{eq6.5} and \eqref{str}, we have
 \begin{align}\label{Ano}
\nonumber\#\mathcal{A}_{n,\ell} \leq\begin{pmatrix} \lfloor (n+1)/2\rfloor - \ell \\ \ell \end{pmatrix}  <\begin{pmatrix} n  \\ \ell \end{pmatrix}
<\frac{n^{\ell}}{\ell !}<\frac{1}{\sqrt{2\pi\ell}}\left(\frac{en}{\ell}\right)^{\ell}&<\frac{1}{\sqrt{2\pi\ell}}\left(\frac{2e\lambda}{c\ell}\right)^{\ell}\\
&<\left(\frac{2e\lambda}{c\ell}\right)^{\ell}<\left(\frac{2}{c}em_*\right)^{\frac{\lambda}{m_*}}\le e^{\varepsilon \lambda},
\end{align}
where the fifth inequality holds for $\lambda>\frac{cn}{2}$, the penultimate inequality comes from the fact that $m_*>c/2$ and the function $\ell \mapsto \left( \frac{2e\lambda}{c\ell} \right)^{\ell}$ is increasing on $(0, 2\lambda/c)$.
For the cardinality of $\mathcal{B}_{\ell, \lambda}$, by using \eqref{eq6.5} and \eqref{str} again, we obtain
\begin{align}\label{Bnu}
\nonumber\#\mathcal{B}_{\ell, \lambda}=\begin{pmatrix} \lambda- m_* \ell-1 \\ \ell - 1 \end{pmatrix}<\begin{pmatrix} \lambda - 1 \\ \ell - 1 \end{pmatrix} < \frac{\lambda^{\ell}}{\ell!}
&< \frac{1}{\sqrt{2\pi\ell}}\left( \frac{e \lambda}{\ell} \right)^{\ell}\\
&< \left( \frac{e \lambda}{\ell} \right)^{\ell}<(e m_*)^{\lambda/m_*} \leq e^{\varepsilon \lambda},
\end{align}
where the penultimate inequality holds for $\ell < \lambda / m_*$ and the function $\ell \mapsto \left( \frac{e \lambda}{\ell} \right)^{\ell}$ is increasing on $(0, \lambda)$.
Now, we turn to estimate the diameter of $I_{n+1}(\sigma_1, \dots, \sigma_{n+1})$. 
For any $\boldsymbol{j}_{\ell}\in \mathcal{A}_{n,\ell}$, $\boldsymbol{\lambda}_{\ell}\in \mathcal{B}_{\ell,\lambda}$ and $(\sigma_1,\ldots,\sigma_{n+1})\in \mathcal{C}_{\boldsymbol{j}_{\ell}}^{\boldsymbol{\lambda}_{\ell}}(n+1)$, 
By Lemma \ref{lemma2.2}, \eqref{lengthIn} and \eqref{cjn}, we have
\begin{align}\label{eq6.8}
\nonumber |I_{n+1}(\sigma_1,\ldots,\sigma_{n+1})| &\le 2^{\ell} 8^{2\ell} \left(\prod_{k=1}^{\ell}\big|I_2(\sigma_{j_k},\sigma_{j_k+1})\big|\right) \big|{I}_{n+1-2\ell}(\tau_1,\ldots,\tau_{n+1-2\ell})\big| \\
\nonumber & \le 2^{7\ell} \left(\prod_{k=1}^{\ell}(\sigma_{j_k}\sigma_{j_k+1})^{-2}\right) \frac{1}{{q^{2}_{n+1-2\ell}}(\tau_1,\ldots, \tau_{n+1-2\ell})}  \\
\nonumber & \le 2^{7\ell} \left(\prod_{k=1}^{\ell}\left(e^{-2(\lambda_k(x)-1)}\right)\right) \frac{1}{{q^{2}_{n+1-2\ell}}(\tau_1,\ldots, \tau_{n+1-2\ell})} \\
&=(2^7e^2)^{\ell} e^{-2\lambda} \frac{1}{{q^{2}_{n+1-2\ell}}(\tau_1,\ldots, \tau_{n+1-2\ell})},
\end{align}
where $(\tau_1,\ldots, \tau_{n+1-2\ell})$ denotes the sequence obtained by  eliminating the terms $\{\sigma_{j_k}, \sigma_{j_k+1}: 1\le k\le \ell\}$ from $(\sigma_1,\ldots, \sigma_{n+1})$. That is, for all $1\leq i\leq n-2\ell$, we have $1\le \tau_i(x)\tau_{i+1}(x)\le e^{m_*}$.

In the following, we shall choose a suitable positive real number $s$ such that 
$\mathcal{H}^s\left(\Gamma_{m_*}(c)\right)\leq0$. It is worth pointing out that  $\theta(c)$ is the unique real solution of $P(\theta)=c\left(\theta-\frac{1}{2}\right)$, then $\theta(c)\in (1/2,1)$. For any $s>\theta(c)$, we deduce that $P(s)<c\left(s-\frac{1}{2}\right)$.  Let $\varepsilon$ be small enough such that \eqref{eq6.5} and 
\begin{equation}\label{epsilon2}
0<\varepsilon<\min\left\{\frac{2s-1}{s+2}, \frac{c(s-1/2)-P(s)}{(1+s/2)c+1}\right\}
\end{equation}
 hold. Denote by 
 \begin{equation}\label{Sig}
  \Sigma_s:=\sum_{(\sigma_1,\ldots,\sigma_{n+1})\in \mathcal{C}_{\boldsymbol{j}_{\ell}}^{\boldsymbol{\lambda}_{\ell}}(n+1)} |I_{n+1}(\sigma_1,\ldots,\sigma_{n+1})|^s.
  \end{equation}
  Then by \eqref{eq6.8}, we have
  $$\Sigma_s\le \sum_{(\sigma_1,\ldots,\sigma_{n+1})\in \mathcal{C}_{\boldsymbol{j}_{\ell}}^{\boldsymbol{\lambda}_{\ell}}(n+1)} (2^7e^2)^{\ell s} e^{-2\lambda s} \frac{1}{q^{2s}_{n+1-2\ell}(\tau_1,\ldots, \tau_{n+1-2\ell})}.$$
  Notice that 
  \begin{equation*}
  \begin{cases}
  1\le \sigma_i(x)\sigma_{i+1}(x)\le e^{m_*}\ \text{for all}~ 1\leq i \leq n ~\text{with}~ i\neq j_1,\ldots, j_{\ell}, \\
  e^{\lambda_k(x)-1}<\sigma_{j_k}(x)\sigma_{j_k+1}(x)\le e^{\lambda_k(x)} \ \text{for all}\ 1\le k\le \ell.
  \end{cases}
  \end{equation*}
  Then, we have
  \begin{align}\label{sigmg}
   \nonumber \Sigma_s & \le (2^7e^2)^{\ell s}\prod_{k=1}^{\ell}\left(\sum_{e^{\lambda_k-1}<\sigma_{j_k}\sigma_{j_k+1} \le e^{\lambda_k}}e^{-2\lambda s}\right)\sum_{1\le \tau_i\tau_{i+1}\le e^{m_*}}\frac{1}{{q^{2s}_{n+1-2\ell}}(\tau_1,\ldots, \tau_{n+1-2\ell})}\\
     & \le (2^7e^2)^{\ell s}e^{(1-2s)\lambda}\sum_{\tau_1,\ldots, \tau_{n+1-2\ell}\in \mathbb{N}} \frac{1}{{q^{2s}_{n+1-2\ell}}(\tau_1,\ldots, \tau_{n+1-2\ell})}.
  \end{align}
  Since $\lambda>m_*\ell$ and $m_*>e^{8}$, we deduce from (\ref{eq6.5}) that
  \begin{equation}\label{yyg}
   (2^7e^2)^{\ell}<(em_*)^{\lambda/m_*}\le e^{\varepsilon \lambda}.   
  \end{equation} 
Notice that $s>1/2$, then by \eqref{pressure function}, there exists $K_{\varepsilon}>0$ such that for all $n\ge 1$,
   \begin{equation}\label{DEP}
       \sum_{a_1,\ldots,a_n\in \mathbb{N}}q_n^{-2s}(a_1,\ldots,a_n) \le K_{\varepsilon}e^{n(P(s)+\varepsilon)}.
   \end{equation}
  Substituting \eqref{yyg} and \eqref{DEP} into \eqref{sigmg}, we obtain
  \begin{align*}
    \Sigma_s & \le e^{\varepsilon \lambda s}e^{(1-2s)\lambda }K_{\varepsilon}e^{(n+1-2\ell)(P(s)+\varepsilon)} \leq K_{\varepsilon}e^{\lambda(1-2s+\varepsilon s)}e^{n(P(s)+\varepsilon)}.
  \end{align*}
 This, in combination with \eqref{Ano} and \eqref{Bnu}, implies that 
  \begin{align}\label{gj4}
   \nonumber \sum_{\lambda>\max\{cn/2,m_*\ell\}}\sum_{\boldsymbol{j}_{\ell}\in \mathcal{A}_{n,\ell}}\sum_{\boldsymbol{\lambda}_{\ell}\in \mathcal{B}_{\ell,\lambda}}\Sigma_s 
     & \le K_{\varepsilon}e^{n(P(s)+\varepsilon)}\sum_{\lambda>\max\{cn/2,m_*\ell\}} e^{(1-2s+\varepsilon(s+2))\lambda} \\
   \nonumber & \le K_{\varepsilon}e^{n(P(s)+\varepsilon)}\sum_{\lambda>cn/2} e^{(1-2s+\varepsilon(s+2))\lambda} \\
    &\le K_{\varepsilon}^*e^{n(P(s)+\varepsilon)+(1-2s+\varepsilon(s+2))cn/2},
  \end{align} 
  where $K_{\varepsilon}^*$ is a constant only depending on $s$, $\varepsilon$ and $c$. 
Now we are ready to estimate the upper bound of the Hausdorff dimension of $\Gamma_{m_*}(c)$. 
It follows from \eqref{tmc}, \eqref{tmnc}, \eqref{epsilon2}, \eqref{Sig} and \eqref{gj4} that
 \begin{align*}
  \mathcal{H}^s\left(\Gamma_{m_*}(c)\right)&\leq \liminf_{N\to\infty}\sum_{n=N}^{\infty}\sum_{\ell=1}^{\lfloor (n+1)/2\rfloor} \sum_{\lambda>\max\{bn,m_*\ell\}}\sum_{\boldsymbol{j}_{\ell}\in \mathcal{A}_{n,\ell}}\sum_{\boldsymbol{\lambda}_{\ell}\in \mathcal{B}_{\ell,\lambda}}\sum_{(\sigma_1,\ldots,\sigma_{n+1})\in \mathcal{C}_{\boldsymbol{j}_{\ell}}^{\boldsymbol{\lambda}_{\ell}}(n+1)} |I_{n+1}(\sigma_1,\ldots,\sigma_{n+1})|^s\\
  &\leq\liminf_{N\to\infty}\sum_{n=N}^{\infty}\sum_{\ell=1}^{\lfloor (n+1)/2\rfloor} \sum_{\lambda>\max\{bn,m_*\ell\}}\sum_{\boldsymbol{j}_{\ell}\in \mathcal{A}_{n,\ell}}\sum_{\boldsymbol{\lambda}_{\ell}\in \mathcal{B}_{\ell,\lambda}}\Sigma_s\\
  &\leq\liminf_{N\to\infty}\sum_{n=N}^{\infty}\sum_{\ell=1}^{\lfloor (n+1)/2\rfloor}  K_{\varepsilon}^*e^{n(P(s)+\varepsilon)+(1-2s+\varepsilon(s+2))cn/2} \\
 & \le K_{\varepsilon}^*\liminf_{N\to\infty}\sum_{n=N}^{\infty} n e^{n(P(s)+\varepsilon)+(1-2s+\varepsilon(s+2))cn/2}=0.
  \end{align*} 
This shows that \[\dim_{\mathrm{H}}\Gamma_{m_*}(c)\le s.\]
Consequently, it follows from \eqref{gmaa} that $\dim_{\mathrm{H}} \Gamma(c) \leq s$. Since $s > \theta(c)$ is arbitrary, we conclude that
$$
\dim_{\mathrm{H}} \Gamma(c) \leq \theta(c).
$$

  \end{proof}
 
From Theorem \ref{X(c)1} and Proposition \ref{X(c)2}, we deduce that, for any $\delta>0$,
  $$\dim_{\mathrm{H}}L(\psi)\le \dim_{\mathrm{H}}\Gamma(c-\delta)\le \theta(c-\delta).$$
Letting $\delta\to 0$, we have
$$\dim_{\mathrm{H}}L(\psi)\le \theta(c).$$

{\bf Acknowledgement:}
The authors would like to thank Professor Lingmin Liao for his invaluable comments. The research is partially supported by Natural Science Foundation of Hunan Province
(No. 2025JJ60006) and National Natural Science Foundation of China (Nos.\, 12201207, 12371072).

\end{document}